\theoremstyle{theorem}
\newtheorem{thm}{Theorem}[section]
\newtheorem{lem}[thm]{Lemma}
\newtheorem{prop}[thm]{Proposition}
\newtheorem{cor}[thm]{Corollary}
\newtheorem{que}[thm]{Question}
\theoremstyle{definition}
\newtheorem{remark}[thm]{Remark}
\begin{document}
\title[A dynamical version of Kuratowski-Mycielski Theorem]
{A dynamical version of Kuratowski-Mycielski Theorem and invariant chaotic sets}
\author[J. Li]{Jian Li}
\address[J. Li]{Department of Mathematics,
Shantou University, Shantou, 515063, Guangdong, China}
\email{lijian09@mail.ustc.edu.cn}

\author[J. L\"u]{Jie L\"u}
\address[J. L\"u]{School of Mathematics, South China Normal University,
Guangzhou 510631, China}
\email{ljie@scnu.edu.cn}
\author[Y. Xiao]{Yuanfen Xiao}
\address[Y. Xiao]{School of Mathematics, South China Normal University,
Guangzhou 510631, China}
\email{xiaoyuanfen@foxmail.com}

\subjclass[2010]{54H20, 37B05, 37B99}

\keywords{Kuratowski-Mycielski Theorem, Li-Yorke chaos,
invariant scrambled sets,
uniform chaos, distributional chaos}

\maketitle
\begin{abstract}
We establish a dynamical version of Kuratowski-Mycielski Theorem on
the existence of ``large'' invariant dependent sets.
We apply this result to the study of invariant chaotic sets
in topological dynamical systems, simplify many known results on this topic and
also obtain some new results.
\end{abstract}

\section{Introduction}
By a (topological) dynamical system, we mean a pair  $(X,f)$,
where $X$ is a Polish space and $f\colon X\to X$ is a continuous map.
Let $d$ be a bounded compatible complete metric on $X$.
A subset $A$ of $X$ is called scrambled if for any two distinct points $x,y\in A$,
\[\liminf_{k\to\infty}d(f^k(x),f^k(y))=0\text{ and }
\limsup_{k\to\infty}d(f^k(x),f^k(y))>0.\]
Following the ideas in~\cite{LY75},
a dynamical system $(X,f)$ is called Li-Yorke chaotic if
there exists an uncountable scrambled subset of $X$.
In~\cite{LY75}, Li and Yorke showed that if a continuous map $f\colon [0,1]\to [0,1]$
has a  periodic point of period 3, then it is Li-Yorke chaotic.
Since then the study of chaos theory in topological dynamics
has been attracted lots of attention.

At the beginning, people tried
to construct uncountable scrambled sets directly
and most of the results focused on interval maps.
The turning point came in 2002, when Huang and Ye \cite{HY02} proved that
Devaney chaos implies Li-Yorke chaos and
Blanchard et al. \cite{BGKM02} proved that positive
entropy also implies Li-Yorke chaos.
In \cite{BGKM02}, they also showed the existence of uncountable scrambled sets
by a powerful tool---Mycielski theorem \cite{M64}, which states that
if $R$ is a dense $G_\delta$ subset of $X\times X$ where $X$ is a perfect Polish space, 
then there exists a dense, $\sigma$-Cantor, $R$-dependent subset $A$ of $X$,
where an $R$-dependent subset $A$ means that $(x,y)\in R$
for any two distinct points $x,y\in A$.
In fact, the first application of Mycielski theorem 
to the study of scrambled sets was done in \cite{I91} by Iwanik.
But this method is not widespread until the publication of \cite{BGKM02}.
In \cite{K73} Kuratowski obtained a hyperspace version of Mycielski theorem
by the Baire-category method.
Let us explain the idea of applications of
Kuratowski-Mycielski theorem to the existence of scrambled sets.
For a positive number $\delta>0$, let
\begin{align*}
LY_\delta(X,f)=\Bigl\{(x,y)\in X\times X\colon&
\liminf_{k\to\infty}d(f^k(x),f^k(y))=0\\
&\text{ and }
\limsup_{k\to\infty}d(f^k(x),f^k(y))\geq\delta \Bigr\}
\end{align*}
and
\begin{align*}
sLY(X,f)=\Bigl\{(x,y)\in X\times X\colon&
\liminf_{k\to\infty}d(f^k(x),f^k(y))=0\\
&\text{ and }
\liminf_{k\to\infty}\max\{d(f^k(x),x),d(f^k(y),y)\}=0\Bigr\}.
\end{align*}
It is easy to see that both $LY_\delta(X,f)$ and $sLY(X,f)$
are $G_\delta$ subsets of $X\times X$ and any $LY_\delta(X,f)$-dependent set
or  $sLY(X,f)$-dependent set is scrambled.
Clearly every $\sigma$-Cantor set is uncountable.
In order to show that a dynamical system $(X,f)$ is Li-Yorke chaotic,
by Kuratowski-Mycielski theorem it is sufficient to show that
there exists a perfect subset $Y$ of $X$
such that $sLY(X,f)$ or  $LY_\delta(X,f)$ is dense in $Y\times Y$.
Instead of constructing uncountable scrambled sets directly,
Kuratowski-Mycielski theorem has been applied extensively to show the existence of
``large'' scrambled sets in topological dynamics. 
We refer the reader to \cite{BHS08} and \cite{LY16} for
recent advances on this topic.

In \cite{A04} Akin gave a comprehensive treatment of Kuratowski-Mycielski theorem
and various applications to topological dynamics.
To introduce the result, we need some preparation.
For a  Polish space $X$, let $C(X)$ and $CANTOR(X)$
be the collection of non-empty compact subsets of $X$ and
the collection of Cantor sets of $X$.
A subset $Q$ of $C(X)$ is called \emph{hereditary}
if $A\in Q$ implies that $Q$ contains every compact subset of $A$.
A hereditary subset $Q$ determines a coherent list $\alpha_Q=\{R_n^Q\}_{n\in\mathbb{N}}$ by letting
$R_n^Q\subset X^n$
be the set of $n$-tuples $(x_1,\dotsc,x_n)\in X^n$
such that $\{x_1,\dotsc,x_n\}\in Q$.
We call $\alpha_Q=\{R_n^Q\}_{n\in\mathbb{N}}$
\emph{the coherent list associated with $Q$}.
We say that a subset $A$ of $X$ is \emph{$\alpha_Q$-dependent}
if for any $n\in\mathbb{N}$ and
any pairwise distinct $n$ elements $x_1,x_2,\dotsc,x_n\in A$,
the tuple $(x_1,x_2,\dotsc,x_n)\in R_n^Q$.

\begin{thm}[Kuratowski-Mycielski Theorem]\label{thm:K-M-Theorem}
Let $Q$ be a $G_\delta$, hereditary subset of $C(X)$
for a perfect, Polish space $X$ and
$\alpha_Q=\{R_n^Q\}_{n\in\mathbb{N}}$
 be the coherent list on $X$ associated with $Q$.
The following conditions are equivalent:
\begin{enumerate}
\item $Q$ is a dense subset of $C(X)$;
\item there exists a dense sequence $\{A_i\}$ in $CANTOR(X)$
such that
\[\bigcup_{i=1}^N A_i\in Q,\quad \forall N\geq 1;\]
\item $R_n^Q$ is a dense  $G_\delta $ subset of $X^n$ for $n=1, 2, \dotsc$;
\item there exists a dense, countable, $\alpha_Q$-dependent set in $X$;
\item there exists a dense, $\sigma$-Cantor, $\alpha_Q$-dependent set in $X$;
\item there exists a nowhere meager $\alpha_Q$-dependent set in $X$.
\end{enumerate}
\end{thm}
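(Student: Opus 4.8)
The plan is to prove the cycle $(1)\Leftrightarrow(3)$, then $(3)\Rightarrow(2)\Rightarrow(5)\Rightarrow(4)\Rightarrow(3)$ together with $(2)\Rightarrow(1)$, and finally to treat the nowhere-meager condition $(6)$ on its own. I expect the translation $(1)\Leftrightarrow(3)$ to be soft. The map $\phi_n\colon X^n\to C(X)$, $(x_1,\dots,x_n)\mapsto\{x_1,\dots,x_n\}$, is continuous, so $R_n^Q=\phi_n^{-1}(Q)$ is automatically $G_\delta$, which gives the $G_\delta$ half of $(3)$ for free. For the density half I would use that finite subsets are dense in $C(X)$ together with the hypothesis that $Q$ is hereditary: a basic Vietoris neighbourhood $\langle U_1,\dots,U_k\rangle$ in $C(X)$ meets $Q$ precisely when some tuple of $R_k^Q$ lies in $U_1\times\cdots\times U_k$, because if $K\in Q$ meets each $U_i$ then choosing $y_i\in K\cap U_i$ yields $\{y_1,\dots,y_k\}\subseteq K$, hence in $Q$ by heredity. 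Running this in both directions gives the equivalence.

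The engine of the theorem is the Mycielski construction, which I would use for $(3)\Rightarrow(2)$. Writing each comeager $R_n^Q$ as $\bigcap_m G_{n,m}$ with $G_{n,m}$ dense open, I would build a Cantor scheme $\{F_s\}_{s\in 2^{<\omega}}$ of nonempty open sets with vanishing diameters and disjoint closures $\overline{F_{s0}},\overline{F_{s1}}\subseteq F_s$, arranging by one countable bookkeeping over all $(n,m)$ and all configurations of distinct nodes at a common level that $F_{s_1}\times\cdots\times F_{s_n}\subseteq G_{n,m}$. Since each selected point genuinely lies in its $F_s$, the resulting Cantor set has every $n$-tuple of distinct points in $R_n^Q$, i.e.\ it is $\alpha_Q$-dependent; interleaving a base of $X$ makes it dense. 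Iterating this inside a shrinking sequence of Vietoris neighbourhoods, and---this is the extra Kuratowski refinement---pushing each finite union into every $G_m$ at the hyperspace level (possible because the $G_m$ are dense open in $C(X)$), produces the dense sequence $\{A_i\}$ with $\bigcup_{i\le N}A_i\in Q$ demanded by $(2)$.

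The remaining arrows I expect to be routine and to rest on heredity. For $(2)\Rightarrow(5)$, set $A=\bigcup_iA_i$: any finitely many distinct points of $A$ lie in some $\bigcup_{i\le N}A_i\in Q$, so by heredity the corresponding tuple lies in $R_n^Q$ and $A$ is a dense $\sigma$-Cantor $\alpha_Q$-dependent set. For $(5)\Rightarrow(4)$ pass to a countable dense subset, dependence being inherited by subsets; $(4)\Rightarrow(3)$ holds because the tuples of distinct points of a dense countable dependent set are dense off the diagonal, hence dense in each $X^n$; and $(2)\Rightarrow(1)$ since each $A_i\in Q$ and $\{A_i\}$ is dense in $CANTOR(X)$, which is dense in $C(X)$.

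The genuine obstacle is the nowhere-meager condition $(6)$. One direction is easy: a nowhere meager set is dense, so a nowhere meager $\alpha_Q$-dependent set forces each $R_n^Q$ dense exactly as in $(4)\Rightarrow(3)$. The hard direction is to manufacture such a set, and here the output of $(5)$ is useless because a $\sigma$-Cantor set is meager; a nowhere meager dependent set is necessarily uncountable of Bernstein type. I would attempt a transfinite recursion over an enumeration $\{P_\alpha\}_{\alpha<2^{\aleph_0}}$ of the perfect subsets of $X$, choosing at stage $\alpha$ a point $a_\alpha\in P_\alpha$ that keeps the growing set dependent, so that the final set meets every perfect set and is therefore nowhere meager. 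The difficulty I anticipate is precisely the preservation of dependence: the points excluded by the constraint coming from a fixed finite subset of the current set form, via the Kuratowski--Ulam theorem, a meager subset of $P_\alpha$, but at stage $\alpha$ there are $|\alpha|$ such constraints and controlling their union inside $P_\alpha$ is the crux. I would try to manage this by maintaining a genericity invariant---that every finite tuple chosen so far has comeager $R_n^Q$-sections---so that each new constraint remains meager, and by organizing the bookkeeping so that only countably many constraints are active at a time; making this argument go through in ZFC is the step I expect to require the most care.
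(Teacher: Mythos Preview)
The paper does not give its own proof of this theorem; it is quoted as background, with the equivalences $(1)$--$(5)$ cited from Akin~\cite[Theorem~5.10]{A04} (building on Mycielski~\cite{M64} and Kuratowski~\cite{K73}), and with the hard direction $(3)\Rightarrow(6)$ attributed to \cite{BH87} under CH and to the recent papers \cite{BZ15,MRZ17} in ZFC. Your outline for $(1)$--$(5)$ is correct and is essentially the standard route: the soft translation via the continuous map $i_n$ (your $\phi_n$), the Mycielski Cantor-scheme for $(3)\Rightarrow(2)$, and the heredity arguments for the remaining implications are all sound.

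The gap is exactly where you locate it, in $(3)\Rightarrow(6)$. Your transfinite recursion over an enumeration $\{P_\alpha\}_{\alpha<2^{\aleph_0}}$ of perfect sets is the classical CH argument, and the obstruction you name---that at stage $\alpha$ one must avoid a union of $|\alpha|$ meager sets inside $P_\alpha$, which need not be meager once $|\alpha|\geq\aleph_1$---is precisely why the ZFC result is recent. Your two proposed repairs do not close the gap: maintaining that chosen tuples have comeager sections is automatic from the hypothesis via Kuratowski--Ulam and does not reduce the number of constraints, and there is no evident way to arrange that ``only countably many constraints are active at a time'' while still meeting every perfect set, since each already-chosen point imposes a constraint at every later stage. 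As written, your construction proves the implication only under $\mathrm{cov}(\mathcal{M})=\mathfrak{c}$ (in particular under CH). The ZFC proofs in \cite{BZ15} and \cite{MRZ17} organise the recursion differently, and that additional idea is what your proposal is missing.
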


Note that in Theorem \ref{thm:K-M-Theorem},
  (3)$\Longrightarrow$(5) was proved in \cite{M64} by Mycielski
and (3)$\Longrightarrow$(1) was proved in \cite{K73} by Kuratowski.
The equivalences of (1)$\Longleftrightarrow$(2)$\Longleftrightarrow$(3)$\Longleftrightarrow$(4)$\Longleftrightarrow$(5) in this version
were proved in \cite[Theorem 5.10]{A04}.
(6)$\implies$(3) is obvious and (3)$\implies$(6) was implicitly contained in \cite{BH87}
under the condition of Continuum Hypothesis, see also \cite{TXL07} or \cite{BHS08}.
Without the condition of Continuum Hypothesis,
(3)$\implies$(6) was proved in \cite{BZ15} for a  binary relation
 and in \cite{MRZ17} for a collection of relations recently.
So in \cite[Proposition 53]{BHS08} we can drop the condition of Continuum Hypothesis,
in other words, if a dynamical system $(X,f)$ is generically chaotic, 
then there exists a nowhere meager scrambled set.

In the definition of Li-Yorke chaos, 
 it only requires the existence of an uncountable scrambled set.
It is interesting to know more properties on scrambled sets.
In \cite{D05} Du initiated the study of invariant scrambled sets.
Let $(X,f)$ be a dynamical system.
We say that a subset $A$ of $X$ is \emph{$f$-invariant} if $f(A)\subset A$.
It is shown in \cite{D05} that an interval map has positive topological
entropy if and only if some of its iterates possess
an uncountable invariant scrambled set.
Since then the study of invariant scrambled sets has been paid much attention.
Let us list some.
In \cite{YL09} Yuan and L\"u showed that if a transitive system
has a fixed point then it has a dense, $\sigma$-Cantor, invariant scrambled set.
In \cite{BGO10} Balibrea, Guirao and Oprocha showed that
if a strongly mixing system has a fixed point then it has a dense,
$\sigma$-Cantor, invariant $\delta$-scrambled set for some $\delta>0$.
In \cite{FOW14} Fory\'s, Oprocha and Wilczy\'nski showed that
if a compact dynamical system has the specification property
and has a fixed point then it has a dense, $\sigma$-Cantor, invariant distributionally
$\delta$-scrambled set for some $\delta>0$.
In \cite{FHLO16} Fory\'s et al. showed that  a transitive compact dynamical system
 has a  dense, $\sigma$-Cantor, invariant $\delta$-scrambled set
for some $\delta>0$ if and only if it has a fixed point and is not uniformly rigid.

As we require scrambled sets to be invariant, we can not
use Kuratowski-Mycielski Theorem directly and some addition techniques are required to construct special relations on the space.
Recently, in~\cite{T16} Tan extended Mycielski's Theorem to
$G_\delta$, $f$-invariant relation strings and then applied this result
to invariant scrambled sets.
But the $f$-invariance of a relation string is a quite strong condition,
usually it is not easy to construct such relation string.
The main aim of the paper is to show
the following extension of Kuratowski-Mycielski Theorem.

\begin{thm}[A dynamical version of Kuratowski-Mycielski Theorem]
\label{thm:Main-resut-KWT}
Let $(X,f)$ be a dynamical system with $X$ a perfect Polish space.
Let $Q$ be a $G_\delta$, hereditary subset of $C(X)$
and $\alpha_Q=\{R_n^Q\}_{n\in\mathbb{N}}$ be the coherent list on $X$
associated with $Q$.
The following conditions are equivalent:
\begin{enumerate}
\item $\{A\in C(X)\colon \bigcup_{i=0}^N f^i(A)\in Q,\ N=1,2,\dotsc\}$
is a dense $G_\delta$ subset of $C(X)$;
\item there exists a dense sequence $\{A_i\}$ in $CANTOR(X)$
such that
\[\bigcup_{j=0}^N\bigcup_{i=1}^N f^j(A_i)\in Q ,\quad \forall N\geq 1;\]
\item $\{(x_1,\dotsc,x_n)\in X^n\colon
\bigcup_{i=0}^N f^i(\{x_1,\dotsc,x_n\})\in Q,\ N=1,2,\dotsc\}$
is a dense $G_\delta $ subset of $X^n$ for $n=1, 2, \dotsc$;
\item there exists a dense, countable, $f$-invariant,
$\alpha_Q$-dependent set in $X$;
\item there exists a $c$-dense, $F_\sigma$, $f$-invariant,
$\alpha_Q$-dependent set in $X$;
\item there exists a nowhere meager, $f$-invariant, $\alpha_Q$-dependent set in $X$.
\end{enumerate}
\end{thm}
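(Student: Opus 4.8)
The plan is to reduce Theorem~\ref{thm:Main-resut-KWT} to the (non-dynamical) Kuratowski--Mycielski Theorem~\ref{thm:K-M-Theorem} by passing to a suitable auxiliary collection. Define
\[
\mathcal{Q}=\Bigl\{A\in C(X)\colon \bigcup_{i=0}^{N}f^i(A)\in Q\ \text{ for all }N\geq 1\Bigr\}.
\]
The first step is to prove a \emph{Main Lemma}: $\mathcal{Q}$ is again a $G_\delta$, hereditary subset of $C(X)$. Heredity is immediate, since for compact $B\subset A$ one has $\bigcup_{i=0}^N f^i(B)\subset\bigcup_{i=0}^N f^i(A)$ and $Q$ is hereditary. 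For the $G_\delta$ property I would use that, for each fixed $N$, the map $\Phi_N\colon C(X)\to C(X)$, $\Phi_N(A)=\bigcup_{i=0}^N f^i(A)$, is continuous for the Hausdorff metric, being a finite union of the continuous hyperspace maps induced by the iterates $f^i$; hence $\mathcal{Q}=\bigcap_{N\geq1}\Phi_N^{-1}(Q)$ is a countable intersection of $G_\delta$ sets, and so $G_\delta$.

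Once the Main Lemma is in hand, I would apply Theorem~\ref{thm:K-M-Theorem} to $\mathcal{Q}$ and match its six conditions with those of Theorem~\ref{thm:Main-resut-KWT}. Conditions (1) and (3) coincide verbatim with conditions (1) and (3) of Theorem~\ref{thm:K-M-Theorem} for $\mathcal{Q}$, once one observes that $\mathcal{Q}$ and each $R_n^{\mathcal{Q}}$ are automatically $G_\delta$. For (2), I would show it is equivalent to condition (2) of Theorem~\ref{thm:K-M-Theorem} for $\mathcal{Q}$, namely $\bigcup_{i=1}^N A_i\in\mathcal{Q}$ for all $N$: given $\bigcup_{j=0}^N f^j(\bigcup_{i=1}^N A_i)\in Q$ for all $N$, heredity together with the monotonicity of these unions in $N$ upgrades it to $\bigcup_{j=0}^M f^j(\bigcup_{i=1}^N A_i)\in Q$ for all $M,N$ (set $K=\max\{M,N\}$ and use the inclusion $\bigcup_{j=0}^M f^j(\bigcup_{i=1}^N A_i)\subset\bigcup_{j=0}^K f^j(\bigcup_{i=1}^K A_i)$), which is exactly $\bigcup_{i=1}^N A_i\in\mathcal{Q}$.

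The heart of the argument is the bridge between $\alpha_{\mathcal{Q}}$-dependence and $f$-invariant $\alpha_Q$-dependence, handling conditions (4)--(6). Here I would prove two facts. First, if $A$ is $f$-invariant, then $A$ is $\alpha_Q$-dependent if and only if it is $\alpha_{\mathcal{Q}}$-dependent: for distinct $x_1,\dots,x_n\in A$ the set $\bigcup_{i=0}^N f^i(\{x_1,\dots,x_n\})$ is a \emph{finite} subset of $A$, so $\alpha_Q$-dependence places it in $Q$, giving $\{x_1,\dots,x_n\}\in\mathcal{Q}$, while the converse uses only heredity. Second, if $D$ is any $\alpha_{\mathcal{Q}}$-dependent set, then its forward orbit $\widehat{D}=\bigcup_{i\geq0}f^i(D)$ is $f$-invariant and $\alpha_Q$-dependent, since any finite subset of $\widehat{D}$ lies inside some $\bigcup_{i=0}^N f^i(\{x_1,\dots,x_p\})\in Q$ with $x_1,\dots,x_p\in D$ distinct. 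With these two facts, the implication from condition (k) of Theorem~\ref{thm:K-M-Theorem} applied to $\mathcal{Q}$ to condition (k) of Theorem~\ref{thm:Main-resut-KWT} is carried out by replacing the dependent set furnished by Theorem~\ref{thm:K-M-Theorem} with its forward orbit $\widehat{D}$, which is automatically $f$-invariant; one only has to check that $\widehat{D}$ inherits the relevant largeness. This is routine for (4) (countable and dense) and (6) (a superset of a nowhere meager set is nowhere meager), while for (5) I would note that a dense $\sigma$-Cantor set meets every nonempty open set in a set of cardinality $c$, hence is $c$-dense, and that a forward orbit of a $\sigma$-Cantor set is a countable union of compacta, hence $F_\sigma$ and still $c$-dense. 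For the reverse implications I would feed the $f$-invariant $\alpha_Q$-dependent set produced by (4)/(5)/(6) back through the first fact to make it $\alpha_{\mathcal{Q}}$-dependent: (4) and (6) then land directly on conditions (4) and (6) of Theorem~\ref{thm:K-M-Theorem} for $\mathcal{Q}$, and (5) yields a dense $\alpha_{\mathcal{Q}}$-dependent set whose $n$-tuples of distinct points are dense in $X^n$ (using that $X$ is perfect) and lie in $R_n^{\mathcal{Q}}$, forcing $R_n^{\mathcal{Q}}$ to be a dense $G_\delta$, i.e.\ condition (3) for $\mathcal{Q}$; since Theorem~\ref{thm:K-M-Theorem} already ties its six conditions together, this closes every loop.

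I expect the main obstacle to be the careful bookkeeping in the Main Lemma---in particular, verifying continuity of the hyperspace maps $A\mapsto f^i(A)$ on the possibly non-compact Polish space $X$ (which rests on the fact that a continuous map is uniformly continuous on a neighbourhood of each compact set) so as to guarantee that $\mathcal{Q}$ is genuinely $G_\delta$---together with making sure that the forward-orbit construction simultaneously preserves all three flavours of ``largeness'' in (4)--(6) while converting $\alpha_{\mathcal{Q}}$-dependence into $f$-invariant $\alpha_Q$-dependence.
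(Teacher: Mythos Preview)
Your proposal is correct and follows essentially the same route as the paper: define $\mathcal{Q}$ (the paper calls it $Q^f$), show it is $G_\delta$ and hereditary via the continuity of $A\mapsto\bigcup_{i=0}^N f^i(A)$, apply Theorem~\ref{thm:K-M-Theorem} to $\mathcal{Q}$, and bridge via the equivalence ``$A$ is $\alpha_{\mathcal{Q}}$-dependent $\Longleftrightarrow$ $\bigcup_{i\geq0}f^i(A)$ is $\alpha_Q$-dependent'' (your two facts together amount to the paper's Claim~2). The only noteworthy divergence is in the reverse direction of (5): the paper passes from a $c$-dense $F_\sigma$ $f$-invariant $\alpha_Q$-dependent set back to (5$'$) by invoking the classical fact that every uncountable Borel set in a Polish space contains a Cantor set, thereby extracting a dense $\sigma$-Cantor $\alpha_{\mathcal{Q}}$-dependent subset, whereas you go instead to (3$'$) by observing that mere density already forces each $R_n^{\mathcal{Q}}$ to be dense. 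Your route is slightly more economical here, since it avoids the Borel-set-to-Cantor theorem and does not actually use the $F_\sigma$ hypothesis in that direction.
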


If $f$ is just the identity map, then Theorem~\ref{thm:Main-resut-KWT}
is nothing but the Kuratowski-Mycielski Theorem.
We will prove Theorem~\ref{thm:Main-resut-KWT} in Section 2
and compare our result with Tan's result.
In Section 3, we apply Theorem~\ref{thm:Main-resut-KWT}
to the study of invariant scrambled sets.
We give a uniform treatment of invariant uniformly chaotic sets,
invariant uniformly mean chaotic sets, invariant $\delta$-scrambled sets
and distributionally $\delta$-scrambled sets,
which extend many results in this topic.

\section{The dynamical version of Kuratowski-Mycielski Theorem}
Let $X$ be a Polish space and $d$ be a bounded compatible complete metric on $X$.
For $n\geq 1$, denote $X^n=X\times X\times \dotsb\times X$ ($n$-times)
and define
$\Delta_n=\{(x, x, \cdots, x)\in X^n\colon x\in X\}$
and $FAT\Delta_n=\{(x_1, x_2, \cdots, x_n)\colon
\text{ there are }i\neq j \text{ such that }x_i=x_j\}$.
It should be noticed that $\Delta_1=X$ and $FAT\Delta_1=\emptyset$.

The \emph{distance} from a point $x$ to a set $A$ is given by
\[d(x, A)=\inf \{d(x, a)\colon a\in A \}.\]
For $\varepsilon>0$, we define the
\emph{$\varepsilon $-neighborhood of a subset $A$} by
\[V_\varepsilon(A)=\{x\in X: d(x, A)<\varepsilon \}.\]
The \emph{Hausdorff metric} of two subsets $A$ and $B$ is given by
\[d_H(A, B)=\inf \{\varepsilon>0 \colon
A\subseteq V_{\varepsilon }(B) \text{ and } B\subseteq V_{\varepsilon}(A)\}.\]

We say that a subset $A$ of $X$ is a \emph{Cantor set} if
it is homeomorphic to the standard middle-third Cantor set,
\emph{$\sigma$-Cantor} if it can be expressed as a countable union of Cantor sets,
\emph{$c$-dense} if $A\cap U$ has the cardinality of continuum
for any non-empty open subset $U$ of $X$,
and \emph{nowhere meager} if $A\cap U$ is not meager
for any non-empty open subset $U$ of $X$.
In \cite{BGKM02}, $\sigma$-Cantor sets are called Mycielski sets,
but in \cite{A04} Mycielski sets are required to be locally non-compact.
In this paper, we adopt the unambiguous terminology---$\sigma$-Cantor.

We consider the following collections of subsets of $X$:
\begin{align*}
  C(X)&=\{A\subset X\colon A \text{ is non-empty compact}\}, \\
  FIN(X)&=\{A\subset X\colon A\text{ is non-empty finite}\}, \\
  CANTOR(X)&=\{A\subset X\colon A\text{ is a Cantor set}\}.
\end{align*}
It is well known that with the Hausdorff metric,
$C(X)$ is a Polish space under the condition that $X$ is Polish.
We will need the following useful lemma.
\begin{lem}[{\cite[Lemma~4.1]{A04}}] \label{lem:maps-on-C(X)}
Let $X$ be a Polish space and $n\in\mathbb{N}$.
 \begin{enumerate}
  \item The map $i_n\colon X^n\to C(X)$,
  $(x_1, x_2, \dotsc, x_n)\mapsto \{x_1, x_2, \dotsc, x_n\}$ is continuous;
  \item The map  $\vee_n\colon C(X)^n\to C(X)$,
  $(A_1, A_2, \dotsc, A_n)\mapsto A_1\cup A_2\cup \dotsb \cup A_n$
 is a continuous and  open surjection;
  \item The map  $\times_n\colon C(X)\to C(X^n)$,  $A\mapsto A^n$ is continuous;
 \item If $f\colon X\to X$ is continuous, then the map $f^*\colon C(X)\to C(X)$,  $A\mapsto f(A) $ is continuous.
\end{enumerate}
\end{lem}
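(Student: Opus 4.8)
The plan is to derive all four statements from elementary estimates for the Hausdorff metric $d_H$, working throughout with the max metric $d^{(n)}((x_1,\dotsc,x_n),(y_1,\dotsc,y_n))=\max_i d(x_i,y_i)$ on $X^n$ (and the induced Hausdorff metric on $C(X^n)$ and product metric on $C(X)^n$). The two cheap ingredients are: if $d(x_i,y_i)<\varepsilon$ for all $i$ then $d_H(\{x_1,\dotsc,x_n\},\{y_1,\dotsc,y_n\})\le\varepsilon$; and if $A_i\subseteq V_\varepsilon(B_i)$ for all $i$ then $\bigcup_i A_i\subseteq V_\varepsilon(\bigcup_i B_i)$. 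The first, applied symmetrically, shows $i_n$ is $1$-Lipschitz, giving (1). For (3), given $(a_1,\dotsc,a_n)\in A^n$ with $d_H(A,B)<\varepsilon$, each $a_i$ lies within $\varepsilon$ of some $b_i\in B$, so $(b_1,\dotsc,b_n)\in B^n$ is within $\varepsilon$ of $(a_1,\dotsc,a_n)$ in $d^{(n)}$; thus $A^n\subseteq V_\varepsilon(B^n)$ and, symmetrically, $d_H^{(n)}(A^n,B^n)\le d_H(A,B)$, so $\times_n$ is $1$-Lipschitz. The second ingredient, applied in both directions, gives $d_H(\bigcup_i A_i,\bigcup_i B_i)\le\max_i d_H(A_i,B_i)$, the continuity half of (2); surjectivity is immediate since $\vee_n(A,\dotsc,A)=A$.

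The real content of (2) is openness. Here I would show that for any $(A_1,\dotsc,A_n)$ with union $A=\bigcup_i A_i$ and any $\varepsilon>0$ there is $\delta\in(0,\varepsilon)$ so that every $B\in C(X)$ with $d_H(A,B)<\delta$ decomposes as $B=\bigcup_i B_i$ with $d_H(A_i,B_i)<\varepsilon$; this exhibits a whole $d_H$-ball around $A$ inside the image of the $\varepsilon$-ball around $(A_1,\dotsc,A_n)$, hence $\vee_n$ maps open sets to open sets. The decomposition is obtained by assigning each point of $B$ to a nearest piece: set $B_i=\{b\in B\colon d(b,A_i)\le\varepsilon\}$. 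Each $B_i$ is compact (closed in $B$); it is nonempty because $A_i\subseteq A\subseteq V_\delta(B)$ forces some $b\in B$ with $d(b,A_i)<\delta\le\varepsilon$; and $\bigcup_i B_i=B$ because $B\subseteq V_\delta(A)$ means every $b\in B$ has $d(b,A_i)<\delta\le\varepsilon$ for the index $i$ realizing $d(b,A)$. Finally $B_i\subseteq V_{2\varepsilon}(A_i)$ by construction, while $A_i\subseteq V_\delta(B_i)$ since the $b\in B$ witnessing $A_i\subseteq V_\delta(B)$ automatically lands in $B_i$; a harmless tightening of constants yields $d_H(A_i,B_i)<\varepsilon$.

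For (4) I would prove sequential continuity of $f^*$, which suffices as $C(X)$ is metrizable. The subtle point is that $f$ is merely continuous, not uniformly continuous, and closed balls in a Polish space need not be compact, so one cannot apply uniform continuity on a fixed neighbourhood of $A$. Instead, given $A_k\to A$ in $C(X)$, I would first verify that $K=A\cup\bigcup_k A_k$ is compact: any sequence drawn from $K$ either meets $A$ or some fixed $A_k$ infinitely often, or consists of points $y_m\in A_{k_m}$ with $k_m\to\infty$, in which case $d(y_m,A)\le d_H(A_{k_m},A)\to 0$ pins a subsequential limit in $A$. On the compact set $K$ the map $f$ is uniformly continuous, so for the given $\varepsilon$ there is $\delta$ with $d(f(x),f(y))<\varepsilon$ whenever $x,y\in K$ and $d(x,y)<\delta$; once $d_H(A_k,A)<\delta$, matching points of $A$ and $A_k$ within $\delta$ (both lying in $K$) gives $d_H(f(A),f(A_k))\le\varepsilon$, so $f(A_k)\to f(A)$.

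I expect the openness of $\vee_n$ in (2) to be the main obstacle, since it is the only clause that is not a one-line Lipschitz estimate and requires the explicit nearest-piece decomposition above; the compactness reduction in (4) is the secondary point requiring care, precisely because $X$ is not assumed to be locally compact.
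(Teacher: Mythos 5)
Your proposal is correct in all four parts: the Lipschitz estimates for $i_n$, $\vee_n$, and $\times_n$, the nearest-piece decomposition $B_i=\{b\in B\colon d(b,A_i)\le\varepsilon\}$ for the openness of $\vee_n$, and the compactness of $K=A\cup\bigcup_k A_k$ to recover uniform continuity of $f$ all check out, including the two delicate points you flagged. The paper itself gives no proof, citing \cite[Lemma~4.1]{A04} instead, and your argument is essentially the standard one found there, so there is nothing to compare beyond noting that you have supplied the details the paper delegates to the reference.
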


We say that a subset $Q$ of $C(X)$ is  \emph{hereditary} if it satisfies
\[A\in Q \implies C(A)\subseteq Q, \]
\emph{finitely hereditary} if it satisfies
\[A\in Q \implies FIN(A)\subseteq Q, \]
and \emph{finitely determined} if it satisfies
\[A\in Q \iff FIN(A)\subseteq Q. \]
Clearly, we have for $Q\subset C(X)$ that
\begin{center}
finitely determined $\implies$ hereditary $\implies$ finitely hereditary.
\end{center}
Assume that $Q$ is a finitely hereditary subset of $C(X)$.
For $n=1,2,\dotsc$ we define $R_n^Q\subset X^n$
to be the set of $n$-tuples $(x_1,\dotsc,x_n)\in X^n$
such that $\{x_1,\dotsc,x_n\}\in Q$.
We call $\alpha_Q=\{R_n^Q\}_{n\in\mathbb{N}}$
\emph{the coherent list associated with $Q$}.

\begin{remark}\label{rem:G-delta-coherent-list}
Let $Q$ be a hereditary subset of $C(X)$
for a Polish space $X$ and $\alpha_Q=\{R_n^Q\}_{n\in\mathbb{N}}$
 be the coherent list on $X$ associated with $Q$.
It is clear that $R_n^{Q}=i_n^{-1}(Q)$.
If $Q$ is a $G_\delta$ subset of $C(X)$, 
then each $R_n^{Q}$ is a $G_\delta$ subset of $X^n$ 
for $n=1,2,\dotsc$,
because each map $i_n$ is continuous.
\end{remark}

A sequence $\alpha=\{R_n\}_{n\in\mathbb{N}}$ of relations on $X$
is called a \emph{relation string}
if $R_n$ is an $n$-th relation on $X$ for any $n\geq 1$.
We say that a subset $A$ of $X$ is a \emph{dependent set for $\alpha$}
or \emph{$\alpha$-dependent}
if for any $n\in\mathbb{N}$ and
any pairwise distinct $n$ elements $x_1,x_2,\dotsc,x_n\in A$,
the tuple $(x_1,x_2,\dotsc,x_n)\in R_n$.
We use $\mathbf{D}(\alpha)$ to denote the collection of all $\alpha$-dependent sets  and put $\mathbf{C}(\alpha)=\mathbf{D}(\alpha)\cap C(X)$.
In other words, $\mathbf{C}(\alpha)$ is the collection of all compact $\alpha$-dependent sets.
We say that a relation string $\alpha=\{R_n\}$ on $X$ is $G_\delta$
if each $R_n$ is a $G_\delta$ subset of $X^n$ for $n=1,2,\dotsc$.
\begin{lem}\label{lem:G-delta-relation}
If $\alpha=\{R_n\}$ is a $G_\delta$, relation string on a Polish space $X$,
then the collection of compact $\alpha$-dependent sets, $\mathbf{C}(\alpha)$,
is a $G_\delta$, finitely determined subset of $C(X)$.
\end{lem}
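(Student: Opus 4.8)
The plan is to exploit that $\alpha$-dependence is a \emph{finitary} condition: whether a set belongs to $\mathbf{C}(\alpha)$ is decided entirely by its finite subsets. The starting point is the reformulation that, for $A\in C(X)$,
\[
A\in\mathbf{C}(\alpha)\iff A^n\subseteq R_n\cup FAT\Delta_n\ \text{ for every }n\geq 1 .
\]
Indeed, for a fixed $n$ the requirement that $(x_1,\dotsc,x_n)\in R_n$ for all pairwise distinct $x_1,\dotsc,x_n\in A$ says precisely that every $n$-tuple drawn from $A$ lies in $R_n$ unless it already lies in $FAT\Delta_n$ (the tuples that are not pairwise distinct), i.e.\ that $A^n\subseteq R_n\cup FAT\Delta_n$.

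The finitely determined property is immediate from this finitary nature. If $A\in\mathbf{C}(\alpha)$ and $F\in FIN(A)$, then $F$ is finite, hence compact, and its pairwise distinct tuples are among those of $A$, so $F$ is $\alpha$-dependent and $F\in\mathbf{C}(\alpha)$; thus $FIN(A)\subseteq\mathbf{C}(\alpha)$. Conversely, if $A\in C(X)$ with $FIN(A)\subseteq\mathbf{C}(\alpha)$, then any pairwise distinct $x_1,\dotsc,x_n\in A$ span a set $\{x_1,\dotsc,x_n\}\in FIN(A)\subseteq\mathbf{C}(\alpha)$, which forces $(x_1,\dotsc,x_n)\in R_n$; hence $A$ is $\alpha$-dependent and $A\in\mathbf{C}(\alpha)$. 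So $\mathbf{C}(\alpha)$ is finitely determined.

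For the $G_\delta$ assertion, I would use the reformulation above to write
\[
\mathbf{C}(\alpha)=\bigcap_{n=1}^{\infty}\bigl\{A\in C(X)\colon A^n\subseteq S_n\bigr\},
\qquad S_n:=R_n\cup FAT\Delta_n .
\]
Since $R_n$ is $G_\delta$ in $X^n$ and $FAT\Delta_n$ is closed (a finite union of the closed diagonals $\{x_i=x_j\}$), hence also $G_\delta$, and since a finite union of $G_\delta$ sets is $G_\delta$, each $S_n$ is a $G_\delta$ subset of $X^n$. Using the continuous map $\times_n\colon C(X)\to C(X^n)$, $A\mapsto A^n$, of Lemma~\ref{lem:maps-on-C(X)}(3), we have
\[
\bigl\{A\in C(X)\colon A^n\subseteq S_n\bigr\}=\times_n^{-1}\bigl(\{K\in C(X^n)\colon K\subseteq S_n\}\bigr),
\]
so, as $G_\delta$ sets are preserved by continuous preimages and countable intersections, it suffices to prove that for a $G_\delta$ subset $G$ of a Polish space $Y$ the hyperspace collection $\{K\in C(Y)\colon K\subseteq G\}$ is $G_\delta$ in $C(Y)$.

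This hyperspace sublemma is the only real obstacle, and I would prove it as follows. Writing $G=\bigcap_m W_m$ with each $W_m$ open, one has $\{K\colon K\subseteq G\}=\bigcap_m\{K\colon K\subseteq W_m\}$, so it suffices to check that $\{K\in C(Y)\colon K\subseteq W\}$ is open for each open $W$. If $W=Y$ this is all of $C(Y)$; otherwise, for $K\subseteq W$ the number $\varepsilon:=\inf\{d(y,K)\colon y\in Y\setminus W\}$ is positive, because $K$ is compact and $Y\setminus W$ is a nonempty closed set disjoint from $K$, and every $K'$ with $d_H(K,K')<\varepsilon$ satisfies $K'\subseteq V_\varepsilon(K)\subseteq W$; thus the Hausdorff ball of radius $\varepsilon$ about $K$ lies inside the collection. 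This is where compactness of the members of $C(Y)$ is essential, since it is exactly what converts the open sets $W_m$ defining $G$ into open sets of the hyperspace; the finitely determined part and the reformulation are routine by comparison. Combining this sublemma with the reduction above shows that $\mathbf{C}(\alpha)$ is a $G_\delta$ subset of $C(X)$, completing the proof.
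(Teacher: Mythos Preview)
Your proof is correct and follows essentially the same approach as the paper: both use the reformulation $A\in\mathbf{C}(\alpha)\iff A^n\subseteq R_n\cup FAT\Delta_n$ for all $n$, observe that $R_n\cup FAT\Delta_n$ is $G_\delta$, and reduce to the hyperspace fact that $\{K\in C(Y)\colon K\subseteq G\}$ is $G_\delta$ whenever $G$ is. The only difference is that the paper cites \cite[Proposition~4.3]{A04} for this last fact, whereas you supply a direct proof via the openness of $\{K\colon K\subseteq W\}$ for open $W$; your version is thus slightly more self-contained.
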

\begin{proof}
First note that a subset $A$ of $X$ is $\alpha$-dependent if and only if
$A^n\subset R_n\cup FAT\Delta_n$ for $n=1,2,\dotsc$.
Then $\mathbf{C}(\alpha)$ is finitely determined.
As $R_n$ is a $G_\delta$ subset $X^n$ and $FAT\Delta_n$ is closed
  $R_n\cup FAT\Delta_n$ is $G_\delta$.
By \cite[Proposition 4.3]{A04}, $\{A\in C(X)\colon A^n\subset R_n\cup FAT\Delta_n\}$ is $G_\delta$.
As each $R_n$ is $G_\delta$ for $n=1,2,\dotsc$,
$\mathbf{C}(\alpha)=
\bigcap_{n=1}^\infty\{A\in C(X)\colon A^n\subset R_n\cup FAT\Delta_n\}$
is $G_\delta$.
\end{proof}

\begin{remark}	

\begin{enumerate}
  \item Assume that
  $\alpha=\{R_n\}$ is a $G_\delta$  relation string on $X$.
  By Lemma~\ref{lem:G-delta-relation}, $\mathbf{C}(\alpha)$
  is a $G_\delta$, finitely determined subset of $C(X)$.
  Let $Q=\mathbf{C}(\alpha)$ and
  $\alpha_Q=\{R_n^Q\}$ be the  coherent list associated with $Q$.
Then it is easy to see that $R_n\subset R_n^Q\subset R_n\cup FAT\Delta_n$
for every $n\in\mathbb{N}$
and a subset of $X$ is $\alpha$-dependent
if and only if it is $\alpha_Q$-dependent.
So if we start from a $G_\delta$, relation string $\alpha$ on $X$, 
then we can apply Theorem~\ref{thm:K-M-Theorem}
to the $G_\delta$, finitely determined
subset $\mathbf{C}(\alpha)$ of $C(X)$ to get the properties of
$\alpha$-dependent sets.
\item  Let $\{R_\lambda\}_{\lambda\in\Lambda}$ be a
 collection of countable relations on $X$,
that is, for each $\lambda\in\Lambda$, there exists $k_\lambda\geq 1$ such that $R_\lambda\subset X^{k_\lambda}$. It generates naturally a relation string $\alpha=\{R'_n\}_{n\in\mathbb{N}}$ as follows:
for each $n\in\mathbb{N}$, if there exists a $\lambda$ such that $k_\lambda=n$
, then put $R_n=\bigcap\{R_{k_\lambda}\colon k_\lambda=n\}$,
otherwise put $R_n=X^n$.
Then a subset of $X$ is $R_\lambda$-dependent for all $\lambda\in\Lambda$
if and only if it is $\alpha$-dependent.
Moreover if each $R_\lambda$ is $G_\delta$,
then $\alpha$ is a $G_\delta$ relation string.
So we can also apply Theorem~\ref{thm:K-M-Theorem}
to a collection of countable relations on $X$.
\end{enumerate}
\end{remark}

Now we are ready to prove the dynamical version of Kuratowski-Mycielski Theorem.

\begin{proof}[Proof of Theorem \ref{thm:Main-resut-KWT}]
Let $Q^f=\{A\in C(X)\colon \bigcup_{i=0}^N f^i(A)\in Q,\ N=1,2,\dotsc\}$
and $a_Q^f=\{R_n^{Q,f}\}$ be the  coherent list on $X$ associated with $Q^f$.
It is clear that an $n$-tuples $(x_1,\dotsc,x_n)\in R_n^{Q,f}$
if and only if $\bigcup_{i=0}^N f^i(\{x_1,\dotsc,x_n\})\in Q$ for all $N=1,2,\dotsc$.
It is clear that $Q^f$ is hereditary.
First we have the following Claim.

\medskip
\noindent\textbf{Claim 1}: $Q^f$ is a $G_\delta$ subset of $C(X)$
and $R_n^{Q,f}$ is a $G_\delta $ subset of $X^n$ for $n=1, 2, \dotsc$.
\begin{proof}[Proof of the Claim 1]
For any $N\in\mathbb{N}$, put
$Q_N^f=\{A\in C(X)\colon \bigcup_{i=0}^{N-1} f^i(A)\in Q\}$.
Then $Q^f=\bigcap_{N=1}^\infty Q_N^f$ and
it is enough to show that each $Q_N^f$ is  a $G_\delta$ subset of $C(X)$.
We define several maps as follows:
\begin{align*}
  I_N&\colon C(X)\to C(X)^N,\ A\mapsto (A,A,\dotsc,A),\\
  f^{\times N}&\colon C(X)^N\to C(X)^N,\ (A_1,A_2,\dotsc,A_N)
  \mapsto (A_1,f(A_2),\dotsc,f^{N-1}(A_N)),\\
  \Theta_N&\colon C(X)\to C(X),\ A\mapsto \bigcup_{i=0}^{N-1} f^i(A).
\end{align*}
It is clear that $I_N$ is continuous and by Lemma~\ref{lem:maps-on-C(X)}~(4)
$f^{\times N}$ is continuous.
Note that $\Theta_N= \vee_N\circ f^{\times N}\circ I_N$
and then it is continuous.
Since $Q_N^f=\Theta_N^{-1}(Q)$ 
and $Q$ is a $G_\delta$ subset of $C(X)$,
$Q_N^f$ is  a $G_\delta$ as subset of $C(X)$.
Then $Q^{f}$  is also a $G_\delta$ as subset of $C(X)$.
For each $n\in\mathbb{N}$, 
it is clear that $R_n^{Q,f}=i_n^{-1}(Q^f)$.
Then $R_n^{Q,f}$ is a $G_\delta$ subset of $X^n$.
\end{proof}

\medskip
\noindent\textbf{Claim 2}:
A subset $A$ of $X$ is $\alpha_Q^f$-dependent
if and only if $\bigcup_{i=0}^\infty f^i(A)$ is $\alpha_Q$-dependent.
In particular, every subset of an $f$-invariant $\alpha_Q$-dependent set
is $\alpha_Q^f$-dependent.
\begin{proof}[Proof of the Claim 2]
Assume that $A$ is $\alpha_Q^f$-dependent and
let $\widehat{A}=\bigcup_{j=0}^\infty f^j(A)$.
For every $n\in\mathbb{N}$ and any pairwise distinct $n$
elements $\widehat{x}_1,\widehat{x}_2,\dotsc,\widehat{x}_n\in \widehat{A}$,
there exist $n$ elements
$x_1,x_2,\dotsc,x_n\in A$ and $k_1,k_2,\dotsc,k_n$ such that
$f^{k_i}(x_i)=\widehat{x}_i$.
Let $m$ be the cardinal number of the set $\{x_1,x_2,\dotsc, x_n\}$.
Without loss of generality, assume that $x_1,x_2,\dotsc,x_m$ are pairwise distinct.
As $A$ is $\alpha_Q^f$-dependent, the tuple $(x_1,x_2,\dotsc,x_m)\in R_m^{Q,f}$.
By the definition of $R_m^{Q,f}$, we have
$\bigcup_{i=0}^N f^i (\{x_1,x_2,\dotsc,x_m\})\in Q$ for all $N\geq 1$.
As $Q$ is hereditary, we get
$\{\widehat{x}_1,\widehat{x}_2,\dotsc,\widehat{x}_n\}\in Q$
and then $(\widehat{x}_1,\widehat{x}_2,\dotsc,\widehat{x}_n)\in R_n^Q$.
This implies that $\widehat{A}$ is $\alpha_Q$-dependent.

Now assume that $\bigcup_{i=0}^\infty f^i(A)$ is $\alpha_Q$-dependent.
As $Q$ is hereditary,
every finite subset of $\bigcup_{i=0}^\infty f^i(A)$ is contained by  $Q$.
Fix $n\in\mathbb{N}$ and any pairwise distinct $n$
elements $x_1,x_2,\dotsc,x_n\in A$.
For any $N\geq 1$, $\bigcup_{i=0}^N f^i(\{x_1,\dotsc,x_n\})$
is a finite subset of $\bigcup_{i=0}^\infty f^i(A)$
and then is contained by $Q$.
Therefore, $(x_1,x_2,\dotsc,x_n)\in R_n^{Q,f}$ and $A$ is $\alpha_Q^f$-dependent.
\end{proof}
Applying Theorem~\ref{thm:K-M-Theorem} to the $G_\delta$,
hereditary subset $Q^f$ of $C(X)$, we get the following equivalent conditions:
\begin{enumerate}
\item[($1'$)] $Q^f$ is a dense subset of $C(X)$;
\item[($2'$)] there exists a dense sequence $\{A_i\}$ in $CANTOR(X)$
such that
\[\bigcup_{i=1}^N A_i\in Q^f,\quad \forall N\geq 1;\]
\item[($3'$)] $R_n^{Q,f}$ is a dense  $G_\delta $
subset of $X^n$ for $n=1, 2, \dotsc$;
\item[($4'$)] there exists a dense, countable, $\alpha_Q^f$-dependent set in $X$;
\item[($5'$)] there exists a dense, $\sigma$-Cantor, $\alpha_Q^f$-dependent set in $X$;
\item[($6'$)] there exists a
nowhere meager $\alpha_Q^f$-dependent set in $X$.
\end{enumerate}

We are going to show that ($i$)$\Leftrightarrow$($i'$) for all $i=1,2,\dotsc, 6$.
By the definitions of $Q^f$ and $\alpha_Q^f$,
it is clear that ($i$)$\Leftrightarrow$($i'$) for $i=1,2,3$.

Assume that a subset $A$ of $X$ is $\alpha_Q^f$-dependent.
Let $\widehat A=\bigcup_{i=0}^\infty f^i(A)$.
By Claim 2, $\widehat A$ is an $f$-invariant $\alpha_Q$-dependent set.
As $A\subset \widehat A$,
 if $A$ is countable dense or nowhere meager then so is $\widehat A$.
So we get ($i'$)$\Rightarrow$($i$) for $i=4,6$.
If $A$ is dense and $\sigma$-Cantor, then $\widehat A$ is $F_\sigma$.
Moreover, for every non-empty open subset $U$ of $X$,
$A\cap U$ contains a Cantor set, then $A$ is $c$-dense and so is $\widehat A$.
So we get ($5'$)$\Rightarrow$($5$).

Now assume that a subset $B$ of $X$ is an $f$-invariant $\alpha_Q$-dependent set.
By Claim 2, $B$ is also $\alpha_Q^f$-dependent. So it is clear that
($i$)$\Rightarrow$($i'$) for $i=4,6$.
If $B$ is $c$-dense and $F_\sigma$,
as every uncountable Borel set in a Polish space contains an Cantor set
(see e.g. \cite[Theorem 13.6]{K95}),
there exists a dense, $\sigma$-Cantor subset $C$ of $B$.
By Claim 2 again, $C$ is $\alpha_Q^f$-dependent. So we get ($5$)$\Rightarrow$($5'$).
\end{proof}

\begin{remark}
If in addition we assume that $f|_C$ is injective for all $C\in Q$,
then for every Cantor set $A$, $f(A)$ is also a Cantor set.
Therefore, in condition (2) of Theorem~\ref{thm:Main-resut-KWT}, the set
$\bigcup_{j=0}^\infty \bigcup_{i=1}^\infty f^j(A_i)$
is a dense, $\sigma$-Cantor, $f$-invariant, $\alpha_Q$-dependent set
and then the condition of (5)
in Theorem~\ref{thm:Main-resut-KWT} can be replaced by
\begin{enumerate}
  \item[(5$''$)] there exists a dense, $\sigma$-Cantor,
  $f$-invariant, $\alpha_Q$-dependent set.
\end{enumerate}
\end{remark}

Let $A$ be an uncountable subset of a Polish space.
By the Cantor-Bendixson Theorem (see e.g. \cite[Theorem 6.4]{K95})
there exists an uniquely decomposition $\overline{A}=P\cup U$
with $P$ is a perfect subset of $X$ and $U$ is at most countable open subset of $\overline{A}$.
In particular, $A\cap P$ is dense in $P$.
For a $G_\delta$, hereditary subset of $C(X)$,
if there exists an uncountable, $\alpha_Q$-dependent set $A$ in $X$,
from the above observation, we can apply the Kuratowski-Mycielski Theorem to
$C(P)\cap Q$ and then get a Cantor set in $C(P)\cap Q$, see e.g. \cite[Corollary 5.11]{A04}.
Recently, the authors in~\cite{DK16} slightly extended this result by showing that
if for every countable ordinal $\gamma$ there exists an $\alpha_Q$-dependent set $B$ of $X$
 such that the Cantor-Bendixson rank of $B$ is not less than $\gamma$, 
then there exists a Cantor $\alpha_Q$-dependent set.
We refer the reader to \cite[Definition 6.12]{K95} for the definition of Cantor-Bendixson rank.
To sum up, we have the following local version of Kuratowski-Mycielski Theorem.
\begin{cor}\label{cor:local-K-W-Thm}
Let $Q$ be a $G_\delta$, hereditary subset of $C(X)$ for a Polish space $X$
and $\alpha_Q=\{R_n^Q\}$
be the coherent list on $X$ associated with $Q$.
The following conditions are equivalent:
\begin{enumerate}
\item  there exists a Cantor set in $Q$;
\item  there exists an uncountable, $\alpha_Q$-dependent set in $X$;
\item  for every countable ordinal $\gamma$,
 there exists an $\alpha_Q$-dependent set $B$ of $X$
 such that the Cantor-Bendixson rank of $B$ is not less than $\gamma$.
\end{enumerate}
\end{cor}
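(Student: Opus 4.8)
The plan is to route all the equivalences through condition (1), proving (1)$\Rightarrow$(2), (1)$\Rightarrow$(3) and (2)$\Rightarrow$(1) directly, and invoking the result of \cite{DK16} recalled just above the statement for the remaining implication (3)$\Rightarrow$(1); together these give (1)$\Leftrightarrow$(2) and (1)$\Leftrightarrow$(3). The two implications issuing from (1) are easy. First I would record the basic observation that, since $Q$ is hereditary, every subset of an $\alpha_Q$-dependent set is again $\alpha_Q$-dependent, and that any Cantor set $K\in Q$ is itself $\alpha_Q$-dependent: given pairwise distinct $x_1,\dotsc,x_n\in K$, the finite set $\{x_1,\dotsc,x_n\}$ is a compact subset of $K$, hence lies in $Q$ by heredity, so $(x_1,\dotsc,x_n)\in R_n^Q$ (the case $n=1$ covering singletons via $R_1^Q$). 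As a Cantor set is uncountable, this yields (1)$\Rightarrow$(2).

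For (1)$\Rightarrow$(3), fix a Cantor set $K\in Q$ and a countable ordinal $\gamma$. Every countable compact metric space embeds into $K$; in particular $K$ contains a homeomorphic copy $B$ of the ordinal space $\omega^\gamma+1$, whose Cantor-Bendixson rank is at least $\gamma$. By the observation above, $B\subset K$ is $\alpha_Q$-dependent, which gives (3).

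The substantive direction is (2)$\Rightarrow$(1), which reproduces \cite[Corollary 5.11]{A04}. Starting from an uncountable $\alpha_Q$-dependent set $A$, I would apply the Cantor-Bendixson Theorem to write $\overline{A}=P\cup U$ with $P$ perfect and $U$ at most countable, so that $D:=A\cap P$ is dense in $P$. Being closed in $X$, the set $P$ is a perfect Polish space, $C(P)$ is closed in $C(X)$, and $Q\cap C(P)$ is a $G_\delta$, hereditary subset of $C(P)$. The key point is that $Q\cap C(P)$ is dense in $C(P)$: the finite subsets of the dense set $D$ are dense in $C(P)$ for the Hausdorff metric, and each such finite subset lies in $Q$ because $D\subset A$ is $\alpha_Q$-dependent. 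Applying Theorem~\ref{thm:K-M-Theorem} (for the perfect Polish space $P$ and the set $Q\cap C(P)$, via the implication (1)$\Rightarrow$(2)) then produces a Cantor set inside $Q\cap C(P)\subset Q$, giving (1).

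Finally, (3)$\Rightarrow$(1) is precisely the extension proved in \cite{DK16}, which I would simply cite. The only place demanding care is the density step in (2)$\Rightarrow$(1): one must verify both that finite subsets of a dense set are dense in the hyperspace $C(P)$ and that the $\alpha_Q$-dependence of $A$ descends to these finite subsets. Everything else is bookkeeping about heredity and the standard topology of $C(X)$, and since we pass to $P$ we never need $X$ itself to be perfect, which is exactly what makes this a \emph{local} version usable without any Continuum Hypothesis assumption.
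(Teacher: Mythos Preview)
Your proposal is correct and follows essentially the same route as the paper, which presents its argument as the discussion immediately preceding the corollary rather than as a formal proof: the substantive implication (2)$\Rightarrow$(1) is handled via Cantor--Bendixson followed by Theorem~\ref{thm:K-M-Theorem} applied to $Q\cap C(P)$ (as in \cite[Corollary~5.11]{A04}), and (3)$\Rightarrow$(1) is delegated to \cite{DK16}. Your explicit argument for (1)$\Rightarrow$(3), embedding a homeomorphic copy of $\omega^\gamma+1$ into the Cantor set $K\in Q$, supplies a detail the paper leaves implicit; the remaining points (heredity, density of $FIN(D)$ in $C(P)$, and $G_\delta$-ness of $Q\cap C(P)$) are exactly the bookkeeping the paper takes for granted.
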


Similar to the proof of Theorem~\ref{thm:Main-resut-KWT},
applying Corollary~\ref{cor:local-K-W-Thm} to the $G_\delta$, hereditary
subset $Q^f$ of $C(X)$, we get the following result.

\begin{cor}\label{cor:local-f-K-W-Thm}
Let $(X,f)$ be a dynamical system.
Let $Q$ be a $G_\delta$ hereditary subset of $C(X)$.
and $\alpha_Q=\{R_n^Q\}$
be the coherent list on $X$ associated with $Q$.
The following conditions are equivalent:
\begin{enumerate}
\item  there exists a Cantor set $A$ such that
$\bigcup_{i=0}^N f^i(A)\in Q$ for $N=1,2,\dotsc$;
\item  there exists an uncountable, $f$-invariant, $\alpha_Q$-dependent set in $X$;
\item  for every countable ordinal $\gamma$,
 there exists an $f$-invariant $\alpha_Q$-dependent set $B$ of $X$
 such that the Cantor-Bendixson rank of $B$ is not less than $\gamma$.
\end{enumerate}
\end{cor}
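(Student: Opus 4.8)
The plan is to mimic exactly the strategy used in the proof of Theorem~\ref{thm:Main-resut-KWT}, namely to reduce the dynamical (invariant) statement to the static Corollary~\ref{cor:local-K-W-Thm} by passing to the auxiliary hereditary set $Q^f=\{A\in C(X)\colon \bigcup_{i=0}^N f^i(A)\in Q,\ N=1,2,\dotsc\}$ together with its associated coherent list $\alpha_Q^f=\{R_n^{Q,f}\}$. The two facts I need about this construction are already established in the course of proving Theorem~\ref{thm:Main-resut-KWT}: by Claim~1, $Q^f$ is again a $G_\delta$, hereditary subset of $C(X)$ (and each $R_n^{Q,f}$ is $G_\delta$ in $X^n$), so Corollary~\ref{cor:local-K-W-Thm} applies to $Q^f$; and by Claim~2, a subset $A\subseteq X$ is $\alpha_Q^f$-dependent if and only if its forward orbit $\widehat A=\bigcup_{i=0}^\infty f^i(A)$ is $\alpha_Q$-dependent, with every subset of an $f$-invariant $\alpha_Q$-dependent set being $\alpha_Q^f$-dependent.

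Concretely, I would first record that applying Corollary~\ref{cor:local-K-W-Thm} to $Q^f$ yields the equivalence of the three conditions: (a$'$) there is a Cantor set in $Q^f$; (b$'$) there is an uncountable $\alpha_Q^f$-dependent set in $X$; (c$'$) for every countable ordinal $\gamma$ there is an $\alpha_Q^f$-dependent set of Cantor-Bendixson rank at least $\gamma$. The task then reduces to matching each primed condition with the corresponding condition (1), (2), (3) of Corollary~\ref{cor:local-f-K-W-Thm}. For (1)$\Leftrightarrow$(a$'$), I would simply unwind the definition: a Cantor set $A$ lies in $Q^f$ precisely when $\bigcup_{i=0}^N f^i(A)\in Q$ for all $N$, which is verbatim condition (1).

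For the remaining two equivalences I would use Claim~2 to translate between invariant $\alpha_Q$-dependence and $\alpha_Q^f$-dependence. Given an uncountable $\alpha_Q^f$-dependent set $A$ (condition (b$'$)), its orbit $\widehat A=\bigcup_{i=0}^\infty f^i(A)$ is an $f$-invariant $\alpha_Q$-dependent set by Claim~2, and it is uncountable because it contains the uncountable set $A$; this gives condition (2). Conversely, if $B$ is an uncountable $f$-invariant $\alpha_Q$-dependent set, then $B$ is itself $\alpha_Q^f$-dependent by the ``in particular'' part of Claim~2, giving (b$'$). The same orbit-passage handles (3)$\Leftrightarrow$(c$'$): an $\alpha_Q^f$-dependent set $B$ and its orbit $\widehat B$ have the property that $B\subseteq\widehat B$, and since the forward orbit is $f$-invariant I can realize the required Cantor-Bendixson rank either at the level of $B$ (passing to $\widehat B$ to gain invariance without decreasing the rank, as $\widehat B\supseteq B$) or restrict an $f$-invariant dependent set $B$ of rank $\geq\gamma$ back to an $\alpha_Q^f$-dependent set via Claim~2.

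The one point that needs genuine care, and which I expect to be the main obstacle, is the monotonicity of the Cantor-Bendixson rank under the orbit operation in the direction (c$'$)$\Rightarrow$(3): passing from an $\alpha_Q^f$-dependent set $B$ of rank $\geq\gamma$ to the $f$-invariant set $\widehat B$, one must check that $\widehat B$ still has rank $\geq\gamma$. Since $B\subseteq\widehat B$ and Cantor-Bendixson rank is monotone under inclusion (a larger set has a larger perfect kernel and its iterated derivatives dominate those of the subset, so its rank does not decrease), this is harmless, but it is the only step that goes beyond a pure definition-chase and where I would pause to confirm the inclusion behaves correctly under taking closures and derived sets. The reverse direction (3)$\Rightarrow$(c$'$) is immediate from the ``in particular'' clause of Claim~2. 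Thus the proof is essentially a transport of Corollary~\ref{cor:local-K-W-Thm} through the already-proven Claims~1 and~2, exactly paralleling how Theorem~\ref{thm:Main-resut-KWT} was derived from Theorem~\ref{thm:K-M-Theorem}.
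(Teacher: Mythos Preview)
Your strategy is exactly the paper's: the proof there is the single sentence ``Similar to the proof of Theorem~\ref{thm:Main-resut-KWT}, applying Corollary~\ref{cor:local-K-W-Thm} to the $G_\delta$, hereditary subset $Q^f$ of $C(X)$,'' and you have unpacked precisely this, using Claims~1 and~2 to match conditions $(i)$ with $(i')$.

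There is, however, a real wrinkle in the step you yourself flagged. Cantor--Bendixson rank in the sense of \cite[Definition~6.12]{K95} (the least $\alpha$ with $B^{(\alpha)}=B^{(\alpha+1)}$) is \emph{not} monotone under inclusion: a dense-in-itself set has rank $0$, so for instance $B=\{0\}\cup\{1/n:n\geq 1\}$ has rank $2$ while any countable dense-in-itself superset of $B$ has rank $0$. Your observation that $B\subseteq C$ forces $B^{(\alpha)}\subseteq C^{(\alpha)}$ for every $\alpha$ is correct, but this does not control the stabilization ordinal of $C$. Consequently, passing from an $\alpha_Q^f$-dependent set $B_\gamma$ of rank $\gamma$ to the orbit $\widehat{B_\gamma}$ can in principle drop the rank (for example if $\widehat{B_\gamma}$ turns out to be dense-in-itself, which nothing rules out). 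The paper's one-line proof glosses over exactly the same point. One clean repair is to note that what the argument of \cite{DK16} behind Corollary~\ref{cor:local-K-W-Thm}(3)$\Rightarrow$(1) actually uses is the condition ``$B^{(\alpha)}\neq\emptyset$ for all $\alpha<\gamma$''; \emph{that} condition is monotone under inclusion (by the derivative inequality you proved), and it transfers to $\widehat{B_\gamma}$ immediately.
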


\begin{remark}
If in addition we assume that $f|_C$ is injective for all $C\in Q$,
then under the condition (1) of Corollary~\ref{cor:local-f-K-W-Thm},
the set $\bigcup_{j=0}^\infty f^j(A)$ is a $\sigma$-Cantor,
$f$-invariant, $\alpha_Q$-dependent set.
Let $Y=\overline{\bigcup_{j=0}^\infty f^j(A)}$.
Then $Y$ is a perfect, $f$-invariant subset of $X$ and
$Q\cap C(Y)$ is a dense $G_\delta$ subset of $Y$.
\end{remark}

In \cite{T16}, Tan extended  Mycielski's Theorem to
$G_\delta$, $f$-invariant relation strings.
To introduce the result, we need the following concepts.
For any $n\geq 1$,
an \emph{$n$-tuple permutation} is
a bijection from the set $\{1,2,\dotsc,n\}$ onto itself.
For an $n$-tuple permutation $\gamma$,
we define a map $T^n_\gamma\colon X^n\to X^n$ by
\[T_\gamma^n(x_1,x_2,\dotsc,x_n)=(x_{\gamma(1)},x_{\gamma(2)},\dotsc,x_{\gamma(n)})\]
for any $(x_1,x_2,\dotsc,x_n)\in X^n$.
For any $n\geq 1$ and $1\leq j\leq n+1$,
we define a map $\xi^{(n+1)}_j\colon X^{n+1}\to X^n$ by
\[\xi^{(n+1)}_j(x_1,x_2,\dotsc,x_{n+1})=(x_1,\dotsc,x_{j-1},x_{j+1},\dotsc,x_{n+1})\]
for any $(x_1,x_2,\dotsc,x_{n+1})\in X^{n+1}$,
in other words, $\xi^{(n+1)}_j$ is a projection from $X^{n+1}$ to $X^n$ erasing
the $j$-th coordinate.

Let $\alpha=\{R_n\}$ be a relation string on $X$ and
$f\colon X\to X$ be a continuous map.
We say that $\alpha$ is \emph{$f$-invariant} if for each $n\geq 1$
and any $(x_1,\dotsc, x_n)\in R_n$, we have
\[T^{n+1}_{\gamma}(x_1,\dotsc,x_n,f^i(x_1))\in R_{n+1}\]
and
\[T^{n}_{\gamma'}(\xi^{(n+1)}_j(x_1,\dotsc,x_n,f^i(x_1)))\in R_n\]
where $\gamma$ (resp.\ $\gamma'$)
runs over all $(n+1)$-tuple (resp.\ $n$-tuple) permutations,
 $i\geq 1$ and $1\leq j\leq n+1$.
The extended Mycielski's Theorem in \cite{T16} is as follows.
\begin{thm}[{\cite[Theorem 5.5]{T16}}] \label{thm:Tan-result}
Let $X$ be a perfect Polish space and $f\colon X\to X$ be a continuous map.
If $\alpha=\{R_n\}_{n\in\mathbb{N}}$ is a $G_\delta$,
$f$-invariant relation string on $X$,
then the following conditions are equivalent:
\begin{enumerate}
\item $R_n$ is dense in $X^n$ for $n=1,2,\dotsc$;
\item there exists a dense subset $A$ of $X$
such that $\bigcup_{j=0}^\infty f^j(A)$ is $\alpha$-dependent;
\item there exists a dense sequence $\{A_i\}$ in $CANTOR(X)$
such that $\bigcup_{i=1}^\infty\bigcup_{j=0}^\infty f^j(A_i)$ is $\alpha$-dependent;
\item there exists a dense, $\sigma$-Cantor subset $M$ in $X$
such that $\bigcup_{j=0}^\infty f^j(M)$ is $\alpha$-dependent.
\end{enumerate}
\end{thm}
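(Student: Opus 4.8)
The plan is to deduce Theorem~\ref{thm:Tan-result} as a special case of the main result, Theorem~\ref{thm:Main-resut-KWT}, by repackaging the relation string $\alpha$ as a hereditary subset of $C(X)$. Concretely, I would set $Q=\mathbf{C}(\alpha)$, the collection of compact $\alpha$-dependent sets. By Lemma~\ref{lem:G-delta-relation} this $Q$ is a $G_\delta$, finitely determined (hence hereditary) subset of $C(X)$, so Theorem~\ref{thm:Main-resut-KWT} applies verbatim; moreover, as observed in the remark following Lemma~\ref{lem:G-delta-relation}, a set is $\alpha$-dependent if and only if it is $\alpha_Q$-dependent, because $R_n\subseteq R_n^Q\subseteq R_n\cup FAT\Delta_n$. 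It then remains to translate the four conditions of Theorem~\ref{thm:Tan-result} into the corresponding conditions of Theorem~\ref{thm:Main-resut-KWT}.

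For conditions (2)--(4) of Theorem~\ref{thm:Tan-result} the translation is essentially formal, built on the second Claim in the proof of Theorem~\ref{thm:Main-resut-KWT}, namely that a set $A$ is $\alpha_Q^f$-dependent exactly when $\widehat A=\bigcup_{j\ge0}f^j(A)$ is $\alpha_Q$-dependent (equivalently, $\alpha$-dependent). Thus condition (3) of Tan, that $\bigcup_i\bigcup_j f^j(A_i)$ be $\alpha$-dependent, is precisely condition (2) of Theorem~\ref{thm:Main-resut-KWT} for $Q=\mathbf{C}(\alpha)$, since a compact set lies in $\mathbf{C}(\alpha)$ iff it is $\alpha$-dependent and an increasing union is $\alpha$-dependent iff each finite stage $\bigcup_{j=0}^{N}\bigcup_{i=1}^{N}f^j(A_i)$ is. Condition (2) of Tan matches condition (4) of Theorem~\ref{thm:Main-resut-KWT}: from a dense $A$ with $\widehat A$ $\alpha$-dependent I pass to a countable dense subset, whose orbit-union is a dense, countable, $f$-invariant, $\alpha_Q$-dependent set, and conversely such a set is already dense and $f$-invariant. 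Condition (4) of Tan is exactly the intermediate condition $(5')$ of the proof of Theorem~\ref{thm:Main-resut-KWT}, a dense, $\sigma$-Cantor, $\alpha_Q^f$-dependent set, which is equivalent to condition (5) there. Hence conditions (2)--(4) of Theorem~\ref{thm:Tan-result} are pairwise equivalent by Theorem~\ref{thm:Main-resut-KWT}.

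It remains to fold in condition (1), that $R_n$ be dense in $X^n$ for all $n$, and this is the one place where the hypothesis that $\alpha$ is $f$-invariant must be used. One inclusion is automatic: since $R_n^{Q,f}\subseteq R_n^{Q}\subseteq R_n\cup FAT\Delta_n$ and $FAT\Delta_n$ is closed and nowhere dense in the perfect space $X^n$, density of $R_n^{Q,f}$ (condition (3) of Theorem~\ref{thm:Main-resut-KWT}) forces density of $R_n$. For the converse I would prove the key \emph{propagation lemma}: if $\alpha$ is $f$-invariant, then every finite $\alpha$-dependent set $S$ has $\widehat S=\bigcup_{i\ge0}f^i(S)$ $\alpha$-dependent, i.e.\ $R_n^{Q}\subseteq R_n^{Q,f}$ and so $R_n^{Q}=R_n^{Q,f}$. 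Granting this, $R_n$ dense implies $R_n^{Q}$ dense (as $R_n\subseteq R_n^Q$), hence $R_n^{Q,f}$ dense, which is condition (3) of Theorem~\ref{thm:Main-resut-KWT}; combined with the previous paragraph, all four conditions are equivalent.

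The propagation lemma is the main obstacle and the only point where the strong notion of $f$-invariance is genuinely exploited. The plan for it is to extract from the two clauses of $f$-invariance three elementary moves on tuples already lying in some $R_p$: taking $j=p+1$ in the erasing clause shows each $R_p$ is invariant under all coordinate permutations; the appending clause lets me adjoin an iterate $f^i$ of any chosen coordinate (after permuting it to the front) and stay in $R_{p+1}$; and the erasing clause with $j\le p$ lets me replace a coordinate by such an iterate while staying in $R_p$. Given distinct orbit points $y_1,\dots,y_m\in\widehat S$, I group them by their base points in $S$, start from the sub-tuple of those base points (which lies in the appropriate relation precisely because $S$ is $\alpha$-dependent), and then use appending and replacing to manufacture exactly the required iterates, a final permutation yielding $(y_1,\dots,y_m)\in R_m$. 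The one delicate point is the bookkeeping of coincidences among intermediate coordinates; but since the clauses of $f$-invariance apply to every tuple of $R_p$ regardless of repetitions, while $\alpha$-dependence only concerns tuples of distinct points, these coincidences are harmless.
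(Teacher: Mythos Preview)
Your proposal is correct, and its core ingredient---the propagation lemma that, for an $f$-invariant relation string $\alpha$, the orbit-union $\bigcup_{j\ge0}f^j(A)$ of any $\alpha$-dependent set $A$ is again $\alpha$-dependent---is exactly the observation the paper isolates. The paper, however, routes the argument more directly: once propagation is known (cited there as \cite[Lemma~3.4]{T16} rather than proved by hand), the clause ``$\bigcup_{j\ge0}f^j(A)$ is $\alpha$-dependent'' in each of Tan's conditions (2)--(4) becomes equivalent to ``$A$ is $\alpha$-dependent,'' and the theorem reduces immediately to the \emph{classical} Kuratowski--Mycielski Theorem~\ref{thm:K-M-Theorem}. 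You instead pass through the dynamical version, Theorem~\ref{thm:Main-resut-KWT}, and then still invoke propagation to match condition~(1); this is a valid detour but somewhat redundant, since propagation already collapses $R_n^{Q,f}$ to $R_n^{Q}$ and hence the dynamical machinery to the classical one. What your route buys is an explicit identification of Tan's conditions with the labelled conditions $(2),(4),(5')$ of Theorem~\ref{thm:Main-resut-KWT}; what the paper's route buys is brevity and a clearer conceptual point---that $f$-invariance of $\alpha$ is precisely the hypothesis under which Tan's theorem degenerates to the non-dynamical Theorem~\ref{thm:K-M-Theorem}.
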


\begin{remark}
Assume that $\alpha=\{R_n\}_{n\in\mathbb{N}}$ is a
$f$-invariant relation string on $X$.
If $A$ is $\alpha$-dependent, by~\cite[Lemma 3.4]{T16}
$A\cup f(A)$ is also $\alpha$-dependent.
By~\cite[Lemma 3.4]{T16} again,
$A\cup f(A)\cup f^2(A)=(A\cup f(A))\cup f(A\cup f(A))$ is also $\alpha$-dependent.
By induction we can conclude that  $\bigcup_{j=0}^N f^j(A)$ is $\alpha$-dependent for any $ N=1,2,\dotsc $. Thus, by the definition of $\alpha$-dependent set, we have 
$\bigcup_{j=0}^\infty f^j(A)$ is $\alpha$-dependent.
Therefore, Theorem~\ref{thm:Tan-result} follows from this observation
and Theorem~\ref{thm:K-M-Theorem}.
\end{remark}

In the next section we will see that, in some situations, we can apply Theorem ~\ref{thm:Main-resut-KWT} to determine the existence of invariant scrambled sets, meanwhile, it is not easy to check the condition ``$f$-invariant relation string" in Theorem~\ref{thm:Tan-result}. 

\section{Invariant scrambled sets in topological dynamical systems}
In this section, we apply the dynamical version of Kuratowski-Mycielski Theorem
to the study of invariant scrambled sets 
in topological dynamical systems.
First, let us recall some preliminaries in topological dynamics.

\subsection{Topological dynamics}
By a (topological) dynamical system, we mean a pair  $(X,f)$,
where $X$ is a Polish space and $f\colon X\to X$ is a continuous map.
We say that a dynamical system $(X,f)$ is \emph{non-trivial}
if $X$ is not a singleton.
A subset $A$ of $X$ is \emph{$f$-invariant} if $f(A)\subset A$.
If a closed subset $Y$ of $X$ is $f$-invariant,
then the restriction $(Y,f|_Y)$ of $(X,f)$ to $Y$ is itself a dynamical system,
and we will call it a subsystem of $(X,f)$.
If there is no ambiguity, we will denote the restriction $f|_Y$ by $f$ for simplicity.

We say that a point $x\in X$ is a \emph{fixed point} if $f(x)=x$,
a \emph{periodic point} if $f^n(x)=x$ for some $n\in\mathbb{N}$,
and a \emph{recurrent point} if $\liminf_{n\to\infty} d(f^n(x),x)=0$.
We denote the \emph{orbit} of $x$ by
\[Orb(x,f)=\{x,f(x),f^2(x),\dotsc\},\]
and the $\omega$-limit set of $x$ by
\[\omega(x,f)=\bigcap_{n=1}^\infty \overline{Orb(f^n(x),f)}.\]

We say that a dynamical system $(X,f)$ is \emph{minimal} if
there is no proper subsystems.
It is clear that $(X,f)$ is minimal if and only if $\omega(x,f)=X$ for all $x\in X$.
A point $x\in X$ is called a \emph{minimal point} if it is contained in a minimal
subsystem of $(X,f)$, in other words, $(\overline{Orb(x,f)},f)$ is a minimal system.

A dynamical system $(X,f)$ is called \emph{transitive}
if for every two non-empty open subsets $U$ and $V$ of $X$
there is an $n\in\mathbb{N}$ such that $U\cap f^{-n}(V)\neq\emptyset$.
A point $x\in X$ is called \emph{a transitive point} if $\omega(x,f)=X$.
In our setting, as $X$ is a Polish  space,
$(X,f)$ is transitive if and only if the collection
of transitive points is a dense $G_\delta$ subset of $X$.
For a transitive system $(X,f)$,
if $X$ has an isolated point, then the system consists of just
one periodic orbit. So, for a non-periodic transitive system
$(X, f)$, the space $X$ is always perfect.

For any $n\in\mathbb{N}$,
the $n$-fold product system of $(X,f)$ is denoted by $(X^n,f^{(n)})$,
where $f^{(n)}=f\times f\times \dotsb\times f$ ($n$-times).
A dynamical system $(X,f)$ is called \emph{weakly mixing}
if the product system $(X^2,f^{(2)})$ is transitive,
and \emph{strongly mixing} if for every two non-empty open subsets $U$ and $V$ of $X$
there is an $N\in\mathbb{N}$ such that $U\cap f^{-n}(V)\neq\emptyset$
for all $n\geq N$.
It is clear that strong mixing implies weak mixing,
which in turn implies transitivity.

We say that a dynamical system $(X,f)$ is  \emph{compact}
if the state space $X$ is compact.
By the Zorn's Lemma, it is not hard to see that every compact dynamical system
has a minimal subsystem and then there exists some minimal point.

\subsection{Invariant uniformly chaotic sets}
Let $(X,f)$ be a dynamical system.
We say that a subset $A$ of $X$
is \emph{uniformly proximal} if
$\liminf_{k\to\infty}diam(f^k(A))=0$ where $ diam(\cdot) $ denotes the diameter of a set.
We let $Q(PROX,f)\subset C(X)$ denote the set of compact
uniformly proximal subsets of $X$
and $\{PROX_n(f)\}$ the coherent list on $X$ associated with $Q(PROX,f)$.
Clearly, $Q(PROX,f)$ is a hereditary subset of $C(X)$ and
a tuple $(x_1,\dotsc,x_n)\in PROX_n(f)$ if and only if it is proximal,
that is $\liminf_{k\to \infty} \max_{1\leqslant i<j\leqslant n} d(f^k(x_i), f^k(x_j))=0$.
$\{PROX_n(f)\}$-dependent sets are called \emph{proximal sets}.
So a subset $A$ of $X$
is proximal if and only if every finite subset of $A$ is uniformly proximal.

We say that a subset $A$ of $X$ is
\emph{uniformly recurrent} if  for every $\varepsilon>0$
there exists $k\geq 1$ such that $d(f^k(x),x)<\varepsilon$ for all $x\in A$.
We let $Q(RECUR,f)\subset C(X)$ denote the set
of compact uniformly recurrent subsets of $X$
and $\{RECUR_n(f)\}$ the coherent list on $X$ associated with $Q(RECUR,f)$.
Clearly, $Q(RECUR,f)$ is a hereditary subset of $C(X)$ and
a tuple $(x_1,\dotsc,x_n)\in RECUR_n(f)$ if and only if it is recurrent
in $(X^n,f^{(n)})$,
that is, for every $\varepsilon>0$ there exists $k\geq 0$
such that $d(f^k(x_i),x_i)<\varepsilon$ for all $i=1,2,\dotsc,n$.
$\{RECUR_n(f)\}$-dependent sets are called \emph{recurrent sets}.
So a subset $A$ of $X$ is recurrent if and only if
every finite subset of $A$ is uniformly recurrent.

A subset $K\subseteq X $ is called a \emph{uniformly chaotic set}
if there are Cantor sets $C_1\subseteq C_2\subseteq \cdots $ such that
\begin{enumerate}
  \item $K=\bigcup_{i=1}^\infty C_i $ is a recurrent subset of $X$ and
  also a proximal subset of $X$;
  \item for each $i=1, 2, \cdots $,  $C_i$ is uniformly recurrent;
  \item for each $i=1, 2, \cdots $,  $C_i$ is uniformly proximal.
\end{enumerate}

It is shown in \cite{AGHSY10} that for a non-trivial transitive system $(X,f)$,
if there exists some subsystem $(Y,f)$ such that $(X\times Y,f\times f)$ is transitive, 
then there exists a dense uniformly chaotic set.
In particular, if there exists a fixed point,  then
there exists a dense uniformly chaotic set.
Here we show that in fact there exists a dense, $f$-invariant, uniformly chaotic set
in this  case.

\begin{thm}\label{thm:uniform-chaos}
Let $(X,f)$ be a non-trivial transitive system.
If it has a fixed point, then there exists
a dense, $f$-invariant, uniformly chaotic set in $X$.
\end{thm}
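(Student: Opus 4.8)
The plan is to obtain a uniformly chaotic set as the output of Theorem~\ref{thm:Main-resut-KWT}, applied to the relation string that records simultaneous proximality, recurrence and orbit-injectivity. First observe that, since $(X,f)$ is non-trivial, transitive and has a fixed point $p$, it cannot reduce to a single periodic orbit, so $X$ has no isolated point and is a perfect Polish space; hence Theorem~\ref{thm:Main-resut-KWT} is available. I would let $\alpha=\{R_n\}$ be the relation string with $R_n=PROX_n(f)\cap RECUR_n(f)\cap INJ_n$, where $INJ_n=\{(x_1,\dots,x_n):f^{i}x_j\neq f^{i'}x_{j'}\text{ whenever }(i,j)\neq(i',j')\}$ is the (clearly $G_\delta$) set of tuples with injective forward orbit. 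As $PROX_n(f)$, $RECUR_n(f)$ and $INJ_n$ are all $G_\delta$, $\alpha$ is a $G_\delta$ relation string, so by Lemma~\ref{lem:G-delta-relation} the set $Q=\mathbf{C}(\alpha)$ is a $G_\delta$, finitely determined (hence hereditary) subset of $C(X)$, and an $\alpha_Q$-dependent set is precisely one that is proximal, recurrent and has injective forward orbit.

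By Theorem~\ref{thm:Main-resut-KWT} it then suffices to verify condition~(3) for this $Q$, namely that each forward relation $R_n^{Q,f}$ is a dense $G_\delta$ subset of $X^n$. Granting this, condition~(2) produces a dense sequence $\{A_i\}$ in $CANTOR(X)$ with $C_N:=\bigcup_{j=0}^N\bigcup_{i=1}^N f^j(A_i)\in Q$ for all $N$, and I would read off the uniformly chaotic set as $K=\bigcup_N C_N$. Indeed the $C_N$ increase to the dense, $f$-invariant set $K$; every finite subset of $K$ lies in some $C_N\in Q$, so $K$ is simultaneously recurrent and proximal; and each $C_N\in Q(RECUR,f)\cap Q(PROX,f)$ is uniformly recurrent and uniformly proximal. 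The injectivity clause guarantees that each $f^j|_{A_i}$ is injective, so every $f^j(A_i)$ is a Cantor set and, by the sum theorem for zero-dimensional compact sets, the finite union $C_N$ is again a Cantor set. Thus $K$ is a dense, $f$-invariant uniformly chaotic set, as required.

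The heart of the matter is the density of $R_n^{Q,f}$. Writing $R_n^{Q,f}$ as an intersection over $N,\varepsilon$ of the open \emph{proximality layers} $P_{N,\varepsilon}=\{\mathbf x:\exists k,\ \operatorname{diam} f^k(\bigcup_{i=0}^N f^i\{x_1,\dots,x_n\})<\varepsilon\}$, the analogous open \emph{recurrence layers}, and the open injectivity sets, it is enough by the Baire category theorem to show each layer is dense. The key device for proximality and recurrence is to place all coordinates on one orbit: given a box $U_1\times\dots\times U_n$, I would choose a transitive (hence recurrent) point $z$ and times $a_1,\dots,a_n$ with $f^{a_j}(z)\in U_j$, and set $\mathbf x=(f^{a_1}z,\dots,f^{a_n}z)$. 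Recurrence is then automatic, since one good return $f^r z\approx z$ of $z$ yields, through uniform continuity of the finitely many maps $f^m$ with $m\le a_n+N$, a simultaneous near-return of the whole orbit segment. For proximality I would exploit $p$: choosing a neighbourhood $W\ni p$ so small that $f^i(W)\subseteq V_\varepsilon(p)$ for $0\le i\le L$ (possible because $f(p)=p$), letting the dense orbit of $z$ make a deep entry $f^s z\in W$ with $L\ge N+a_n-a_1$, and taking any time $t$ with $[t+a_1,\,t+a_n+N]\subseteq[s,s+L]$, the entire segment lands in $V_\varepsilon(p)$, so its diameter is below $2\varepsilon$.

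The main obstacle — the usual one in this circle of ideas — is \emph{synchronization}: with only transitivity (not weak mixing) available, one must force all $n$ coordinates to approach the diagonal and to return near themselves at a \emph{common} time. The single-orbit choice resolves recurrence for free, since one clock $z$ governs every coordinate, while the fixed point resolves proximality through its holding property: a deep entry to a neighbourhood of $p$ keeps the orbit near $p$ for a controlled number of steps, long enough to absorb the bounded spread $a_n-a_1$ of the visit times and so align all coordinates inside $V_\varepsilon(p)$ at once. Density of the injectivity sets (this is where perfectness of $X$ is used) is routine, and combining the three families of dense open layers by Baire yields condition~(3) and with it the theorem.
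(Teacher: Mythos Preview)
Your strategy is close to the paper's, but there is a genuine gap in the choice of $Q$. You take $Q=\mathbf{C}(\alpha)$, which by Lemma~\ref{lem:G-delta-relation} is \emph{finitely determined}: a compact set $A$ lies in $Q$ exactly when every finite tuple drawn from $A$ lies in $PROX_n\cap RECUR_n\cap INJ_n$. Consequently, when condition~(2) of Theorem~\ref{thm:Main-resut-KWT} returns $C_N\in Q$, you learn only that $C_N$ is a proximal set and a recurrent set in the coherent-list sense, not that $C_N$ is \emph{uniformly} proximal or \emph{uniformly} recurrent. For an infinite compact set the finitely-determined condition is strictly weaker than membership in $Q(PROX,f)$ or $Q(RECUR,f)$, so your assertion ``each $C_N\in Q(RECUR,f)\cap Q(PROX,f)$'' does not follow from $C_N\in\mathbf{C}(\alpha)$. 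Since the definition of a uniformly chaotic set requires the $C_N$ themselves to be uniformly recurrent and uniformly proximal, the argument as written does not deliver the conclusion. The fix is simply to take $Q=Q(RECUR,f)\cap Q(PROX,f)$ from the outset, as the paper does; this $Q$ is still $G_\delta$ and hereditary, its coherent list on finite tuples coincides with $PROX_n\cap RECUR_n$, and now $C_N\in Q$ genuinely gives uniform recurrence and uniform proximality. (Your density calculation for condition~(3) survives unchanged, since condition~(3) only tests finite sets and on finite sets the two choices of $Q$ agree.)

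Two smaller points. First, the injectivity clause $INJ_n$ is redundant: once $A_i$ is uniformly recurrent, $f^j|_{A_i}$ is automatically injective, because $f^j(x)=f^j(y)$ forces $f^k(x)=f^k(y)$ for all $k\ge j$, and the uniform returns then give $x=y$. This is exactly how the paper obtains that each $f^j(A_i)$ is Cantor. Second, the paper reaches the hypotheses of Theorem~\ref{thm:Main-resut-KWT} via condition~(4) rather than~(3): the orbit $A=Orb(x,f)$ of a transitive point is already a dense, countable, $f$-invariant $\alpha_Q$-dependent set (your ``deep entry near $p$'' argument is precisely what makes each finite subset of $A$ uniformly proximal, and a single good return of $x$ handles recurrence). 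This avoids the layer-by-layer Baire bookkeeping entirely.
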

\begin{proof}
Let $Q=Q(RECUR,f)\cap Q(PROX,f)$ and
$\alpha_Q=\{R_n^Q\}_{n\in\mathbb{N}}$ be the coherent list on $X$
associated with $Q$.
Note that
\begin{align*}
Q(RECUR,f)&=\bigcap_{M=1}^\infty \bigcap_{N=1}^\infty \bigcup_{k=N}^\infty
\biggl\{A\in C(X)\colon d(f^k(x),x)<\frac{1}{M},\forall x\in A\biggr\},\\
Q(PROX,f)&=\bigcap_{M=1}^\infty \bigcap_{N=1}^\infty \bigcup_{k=N}^\infty
\biggl\{A\in C(X)\colon diam(f^k(A))<\frac{1}{M}\biggr\}.
\end{align*}
It is clear that
both $Q(RECUR,f)$ and $Q(PROX,f)$ are hereditary $G_\delta$ subsets of $C(X)$
(see also pages 38 and 41 in \cite{A04}).
Then $Q$ is also a hereditary $G_\delta$ subset of $C(X)$.
Pick a transitive point $x\in X$ and put $A=Orb(x,f)$.
It is clear that $A$ is  dense and $f$-invariant.
Let $p\in X$ be a fixed point.
As $x$ is a transitive point, there exist two increasing sequences
$\{n_i\}$ and $\{m_i\}$ of positive integers such that
$\lim_{i\to\infty} f^{n_i}(x)=p$ and $\lim_{i\to\infty}f^{m_i}(x)=x$.
By the continuity of $f$, for any $k\geq 1$
$\lim_{i\to\infty} f^{n_i}(f^k(x))=f^k(p)=p$ and
$\lim_{i\to\infty}f^{m_i}(f^k(x))=f^k(x)$.
So every finite subset of $A$ is uniformly recurrent and uniformly proximal,
which implies that $A$ is $\alpha_Q$-dependent.
Then the condition (4) of Theorem~\ref{thm:Main-resut-KWT} is satisfied.
By the condition (2) of  Theorem~\ref{thm:Main-resut-KWT},
 there exists a dense sequence $\{A_i\}$ in $CANTOR(X)$
such that $\bigcup_{j=0}^N\bigcup_{i=1}^N f^j(A_i)\in Q$ for all $N\geq 1$.
As $A_i$ is uniformly recurrent, $f^j|_{A_i}$ is injective,
Then $f^j(A_i)$ is a Cantor set, as so is $A_i$.
Let $C_k=\bigcup_{j=0}^k\bigcup_{i=1}^k f^j(A_i)$ and $C=\bigcup_{k=1}^\infty C_k$.
Then $C$ is a dense, $f$-invariant, uniformly chaotic set.
\end{proof}

\begin{remark}\label{rem:proximal-fixed-point}
If $(X,f)$ is a compact dynamical system and there exists $x\in X$
such that $(x,f(x))$ is proximal,
then there exists an  increasing sequence
$\{n_i\}$ of positive integers such that
$\lim_{i\to\infty} d(f^{n_i}(x),f^{n_i}(f(x)))=0$.
As $X$ is compact,
without loss of generality,  assume that $\lim_{i\to\infty} f^{n_i}(x)=p$.
By the continuity of $f$, $\lim_{i\to\infty} f^{n_i}(f(x))=f(p)$.
This implies that $d(p,f(p))=0$, in other words, $p$ is a fixed point.
Therefore, the existence of a fixed point is a necessary condition
for the existence of invariant proximal sets.
\end{remark}

For compact dynamical systems, we have the following equivalent condition
for the existence of $f$-invariant, uniformly chaotic sets.
\begin{thm}\label{thm:cpt-inv-uniform-chaos}
For a  compact dynamical system $(X,f)$,
the following conditions are equivalent:
\begin{enumerate}
  \item there exists an $f$-invariant, uniformly chaotic set in $X$;
  \item there exists a subsystem $(Y,f)$ which has a dense,
  $f$-invariant, uniformly chaotic set;
  \item there exists a recurrent point $x\in X$ such
that $x\neq f(x)$ and $(x,f(x))$ is proximal;
\item there exists a recurrent point $x\in X$ and a fixed point $p\in X$
  such that $x\neq p$ and $(x,p)$ is proximal.
\end{enumerate}
\end{thm}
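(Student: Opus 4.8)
The plan is to prove the cyclic chain of implications $(1)\Rightarrow(3)\Rightarrow(4)\Rightarrow(2)\Rightarrow(1)$, using Remark~\ref{rem:proximal-fixed-point} to pass between a proximal pair with a recurrent point and the existence of a fixed point. The implication $(2)\Rightarrow(1)$ is trivial, since a dense invariant uniformly chaotic set in a subsystem is in particular an invariant uniformly chaotic set in $X$. The implication $(3)\Leftrightarrow(4)$ is essentially Remark~\ref{rem:proximal-fixed-point}: given a recurrent $x$ with $x\neq f(x)$ such that $(x,f(x))$ is proximal, the remark produces a fixed point $p$ as a limit $\lim_i f^{n_i}(x)=p$ along a proximal sequence; one then checks that $(x,p)$ is proximal (along a suitable subsequence) and that $x\neq p$ (which must be argued carefully, since a priori $x$ could equal $p$; here the recurrence of $x$ together with $x\neq f(x)$ should force $x\neq p$ because $p$ is fixed while $x$ is not). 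Conversely, if $(x,p)$ is proximal with $p$ fixed and $x\neq p$, then $x\neq f(x)$ (otherwise $x$ would be a fixed point proximal to the distinct fixed point $p$, forcing $x=p$) and $(x,f(x))$ is proximal since both $x$ and $f(x)$ are proximal to the fixed point $p$.

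For $(1)\Rightarrow(3)$, I would start from an invariant uniformly chaotic set $K=\bigcup_i C_i$, which is both recurrent and proximal as a whole. Since $K$ is uncountable it contains two distinct points; using that $K$ is proximal I would find a point $x\in K$ with $x\neq f(x)$ such that $(x,f(x))$ is proximal and $x$ is recurrent. The recurrence of $x$ follows from the recurrence of $K$ (every finite subset, in particular the singleton $\{x\}$, is uniformly recurrent), and the proximality of the pair $(x,f(x))$ follows because $x,f(x)\in K$ (as $K$ is $f$-invariant) and $K$ is proximal, so the finite set $\{x,f(x)\}$ is uniformly proximal. The only genuine point to verify is that some such $x$ satisfies $x\neq f(x)$; if every point of $K$ were fixed then $K$ would be a set of fixed points, which cannot be simultaneously proximal and uncountable (two distinct fixed points are never proximal), so at least one $x\in K$ has $x\neq f(x)$.

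The main work is in $(4)\Rightarrow(2)$, which is where Theorem~\ref{thm:uniform-chaos} enters. Given a recurrent point $x$ and a fixed point $p$ with $x\neq p$ and $(x,p)$ proximal, I would pass to the subsystem $(Y,f)$ with $Y=\overline{Orb(x,f)}=\overline{\{x,f(x),f^2(x),\dots\}}$. This $Y$ is a compact $f$-invariant set containing the fixed point $p$, and since $x$ is recurrent and $x\neq p$, the system $(Y,f)$ is non-trivial. The key claim is that $(Y,f)$ is transitive with $x$ as a transitive point: here I would use that $x$ is recurrent together with proximality to $p$ to show $\omega(x,f)=Y$, so that $x$ is a transitive point of $Y$ and hence $(Y,f)$ is a non-trivial transitive system possessing a fixed point $p$. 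Theorem~\ref{thm:uniform-chaos} then applies directly to $(Y,f)$ and yields a dense, $f$-invariant, uniformly chaotic set in $Y$, which is exactly condition~(2).

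The hard part will be establishing transitivity of $(Y,f)$, i.e. showing $\omega(x,f)=Y=\overline{Orb(x,f)}$ in step $(4)\Rightarrow(2)$. This amounts to proving that $x$ is not merely recurrent but genuinely transitive for its orbit closure; the recurrence of $x$ gives $x\in\omega(x,f)$ and hence $\overline{Orb(x,f)}\subseteq\omega(x,f)$ up to the standard argument, but one must confirm the reverse inclusion and that the resulting $Y$ really is perfect and non-trivial so that Theorem~\ref{thm:uniform-chaos} is applicable. I expect that recurrence of $x$ forces $\omega(x,f)=\overline{Orb(x,f)}$ directly, and that non-triviality follows from $x\neq p$, so this obstacle is surmountable but deserves explicit care.
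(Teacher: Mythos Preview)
Your proposal is correct and follows essentially the same route as the paper, which proves the cycle $(1)\Rightarrow(2)\Rightarrow(3)\Rightarrow(4)\Rightarrow(1)$; your reordering $(1)\Rightarrow(3)\Rightarrow(4)\Rightarrow(2)\Rightarrow(1)$ uses the same ingredients (pick a non-periodic point in the chaotic set; invoke Remark~\ref{rem:proximal-fixed-point}; pass to the orbit closure and apply Theorem~\ref{thm:uniform-chaos}). One remark: the step you flag as the ``hard part'' --- showing $\omega(x,f)=\overline{Orb(x,f)}$ in $(4)\Rightarrow(2)$ --- is immediate from recurrence alone: $x\in\omega(x,f)$ and $\omega(x,f)$ is closed and forward-invariant, so it contains $\overline{Orb(x,f)}$, while the reverse inclusion always holds. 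Proximality to $p$ is used only to place $p\in Y$ and (together with $x\neq p$) to rule out periodicity of $x$, so that $Y$ is perfect and Theorem~\ref{thm:uniform-chaos} applies; the paper handles this in one line.
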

\begin{proof}
(1)$\Rightarrow$(2) Assume that $C$ is an $f$-invariant, uniformly chaotic set in $X$.
Let $Y=\overline{C}$. Then $Y$ is closed and $f$-invariant and
$C$ is a dense, $f$-invariant, uniformly chaotic set in the subsystem $(Y,f)$.

(2)$\Rightarrow$(3)
Let $C$ be a dense, $f$-invariant, uniformly chaotic set in $Y$.
As $C$ contains at most one periodic point, pick a non-periodic point $x\in C$.
Then $x\neq f(x)$.
As $C$ is $f$-invariant, $f(x)$ is also in $C$ and then $(x,f(x))$ is proximal.

(3)$\Rightarrow$(4) It follows from Remark~\ref{rem:proximal-fixed-point}.

(4)$\Rightarrow$(1) Let $Y=\overline{Orb(x,f)}$.
As $x$ is recurrent, $(Y,f)$ is transitive.
Note that $p\in Y$ but $x\neq p$. So $x$ is not a periodic point and
then $Y$ is perfect.
Now applying Theorem~\ref{thm:uniform-chaos} to the subsystem $(Y,f)$,
we get a dense, $f$-invariant, uniformly chaotic set in $Y$, which is as required.
\end{proof}

Let $(X, f) $ be a dynamical system and $n\geqslant 2$.
We say that an tuple $(x_1, x_2, \dotsc, x_n)\in X^n\setminus FAT\Delta_n$
is \emph{strongly $n$-scrambled} if it is proximal and a recurrent point in $(X^n,f^{(n)})$.
A subset $S\subseteq X$ (with at least $n$ points)
 is called \emph{strongly $n$-scrambled}
if every $n$ pairwise distinct points form an $n$-$\delta$-scrambled tuple.
Note that strongly $2$-scrambled tuples are
just the classical strongly scrambled pairs.

\begin{remark}
It is shown in \cite[Theorem 2.7]{YL09} that
for a non-trivial transitive system $(X,f)$ if
it has a fixed point, then there exists
a dense, $\sigma$-Cantor,
$f$-invariant, strongly $n$-scrambled set in $X$ for all $n\geq 2$.
Clearly, every uniformly chaotic set is strongly $n$-scrambled for all $n\geq 2$.
So Theorem~\ref{thm:uniform-chaos} slightly extends this result.

In fact, in \cite[Theorem 2.7]{YL09} the condition of existence of a fixed point can
be replaced by the following condition ($*$):
``for every $\varepsilon>0$ there exists a compact $f$-invariant subset $Y$ of $X$
with $diam(Y)<\varepsilon$''.
If $X$ is compact, then it is easy to see that this condition ($*$)
is equivalent to the existence of a fixed point.
But there exists a transitive system which satisfies this condition without
periodic point (see \cite[Theorem 2.8]{YL09}).
Using similar arguments, Theorem~\ref{thm:uniform-chaos} also holds if
replacing the condition of existence of a fixed point by the condition ($*$) above.
\end{remark}

\begin{remark}
It is shown in \cite[Theorem 4.3]{FHLO16} that
for a compact dynamical system $(X,f)$ if
it has an uncountable invariant strongly scrambled set
then it also has a $\sigma$-Cantor, $f$-invariant, strongly scrambled set.
In fact, by Theorem~\ref{thm:cpt-inv-uniform-chaos}
if there exists a point $x\in X$ such that $(x,f(x))$ is a strongly scrambled pair, 
then there exists an $f$-invariant, uniformly chaotic set in $X$.
\end{remark}

\subsection{Invariant uniformly mean chaotic sets}
Let $(X,f)$ be a dynamical system.
We say that a subset $A$ of $X$ is \emph{uniformly mean proximal}
if
\[\liminf_{n\to\infty}\frac{1}{n}\sum_{k=1}^n diam(f^k(A))=0.\]
We let $Q(\overline{PROX},f)\subset C(X)$ denote the set of compact
uniformly mean proximal subsets of $X$
and $\{\overline{PROX}_n(f)\}$ the associated coherent list on $X$.
Clearly, $Q(\overline{PROX},f)$ is a hereditary subset of $C(X)$ and
a tuple $(x_1,\dotsc,x_n)\in \overline{PROX}_n(f)$ if and only if it is mean proximal,
that is \[\liminf_{n\to\infty}\frac{1}{n}\sum_{k=1}^n
\max_{1\leq i<j\leq n} d(f^k(x_i),f^k(x_j))=0.\]
$\{\overline{PROX}_n(f)\}$-dependent sets are called \emph{mean proximal sets}.
So a subset $A$ of $X$ is mean proximal if and only if
every finite subset of $A$ is uniformly mean proximal.

Let $P\subset \mathbb{N}$.
The \emph{upper density of $P$}
is defined by
\[\overline{d}(P)=\limsup_{m\rightarrow\infty}\frac{\#(P\cap\{1,\cdots,m\})}{m},\]
where as usual $\#(A)$ denotes the cardinality of a set $A$.

The following lemma is a folklore result, we refer the reader to
\cite[Theorem 1.20]{W82} or \cite[Lemma 3.1]{LTY15}
for similar results in the same idea.
\begin{lem}\label{lem:mean-proximal}
If $\{a_k\}_{k=1}^\infty$ is a bounded sequence of non-negative numbers,
then the following conditions are equivalent:
\begin{enumerate}
  \item $\liminf_{n\to\infty}\frac{1}{n}\sum_{k=1}^n a_k=0$;
  \item for every $\varepsilon>0$,
the set $\{k\in\mathbb{N}\colon 0\leq a_k<\varepsilon\}$ has  upper density one;
\item there exists a subsequence $\{k_i\}$ of  upper density one
such that $\lim_{i\to\infty} a_{k_i}=0$.
\end{enumerate}
\end{lem}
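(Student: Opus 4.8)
The statement to prove is a standard Cesàro-liminf characterization: for a bounded nonnegative sequence $\{a_k\}$, the three conditions
\[
\liminf_{n\to\infty}\tfrac1n\sum_{k=1}^n a_k=0,
\]
"for every $\varepsilon>0$ the set $\{k:0\le a_k<\varepsilon\}$ has upper density one," and "there is a subsequence of upper density one along which $a_{k_i}\to 0$" are equivalent. The plan is to prove $(1)\Rightarrow(2)\Rightarrow(3)\Rightarrow(1)$, which keeps each implication short and avoids re-proving the diagonal extraction twice.

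The implication I would do first, and which I expect to carry the real content, is $(1)\Rightarrow(2)$. Fix $\varepsilon>0$ and set $P_\varepsilon=\{k:a_k\ge\varepsilon\}$; the goal is to show $\overline{d}(\mathbb{N}\setminus P_\varepsilon)=1$, equivalently that the lower density of $P_\varepsilon$ is zero, i.e.\ $\liminf_n \#(P_\varepsilon\cap\{1,\dots,n\})/n=0$. The key estimate is the Markov-type bound $\tfrac1n\sum_{k=1}^n a_k\ge \tfrac{\varepsilon}{n}\,\#(P_\varepsilon\cap\{1,\dots,n\})$, since every index in $P_\varepsilon$ contributes at least $\varepsilon$ to the sum. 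Taking $\liminf$ over $n$ and using hypothesis $(1)$ gives $\varepsilon\,\liminf_n \#(P_\varepsilon\cap\{1,\dots,n\})/n\le 0$, so the lower density of $P_\varepsilon$ vanishes and its complement has upper density one. This is the only place where the boundedness hypothesis is not needed and a genuine inequality is exploited.

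For $(2)\Rightarrow(3)$ I would run a diagonal argument. Applying $(2)$ with $\varepsilon=1/m$ for each $m$, the set $S_m=\{k:a_k<1/m\}$ has upper density one, and $S_1\supseteq S_2\supseteq\cdots$. I would choose a strictly increasing sequence of thresholds $N_1<N_2<\cdots$ so that, on each block $(N_m,N_{m+1}]$, the density of $S_{m}$ computed up to the block endpoints is close to one (possible because $\overline d(S_m)=1$ lets me find arbitrarily long initial segments where the counting ratio exceeds $1-1/m$), and then set $K=\bigcup_m (S_m\cap (N_m,N_{m+1}])$. Enumerating $K$ as $\{k_i\}$, one gets $a_{k_i}\to 0$ because for $k\in K\cap(N_m,N_{m+1}]$ we have $a_k<1/m$, and a short counting check at the breakpoints $N_{m+1}$ shows $\overline d(K)=1$. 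The bookkeeping to guarantee upper density one along the whole sequence (not just within each block) is the fiddly part, but it is routine once the blocks are chosen greedily.

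Finally $(3)\Rightarrow(1)$ is the easy direction and I would dispatch it by boundedness. Let $M=\sup_k a_k<\infty$ and let $\{k_i\}$ have upper density one with $a_{k_i}\to 0$. Given $\varepsilon>0$, for large $n$ the proportion of indices in $\{1,\dots,n\}$ lying in $\{k_i\}$ with $a_{k_i}<\varepsilon$ is at least $1-\varepsilon$ along a subsequence of $n$'s, so on that subsequence $\tfrac1n\sum_{k=1}^n a_k\le \varepsilon + M\varepsilon$; letting $\varepsilon\to 0$ forces the $\liminf$ to be $0$. The main obstacle is thus entirely concentrated in the diagonal extraction of $(2)\Rightarrow(3)$; the other two implications are one-line Markov and boundedness estimates.
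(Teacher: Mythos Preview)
Your argument is correct and is precisely the standard one. Each implication is handled soundly: the Markov bound for $(1)\Rightarrow(2)$, the nested-set diagonal extraction for $(2)\Rightarrow(3)$ (where the monotonicity $S_1\supseteq S_2\supseteq\cdots$ is exactly what makes the block-by-block counting at the endpoints $N_{m+1}$ work), and the boundedness estimate for $(3)\Rightarrow(1)$.

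There is nothing to compare against in the paper itself: the authors do not prove this lemma. They state it as a folklore result and refer the reader to Walters' book (Theorem~1.20) and to Li--Tu--Ye (Lemma~3.1) for arguments ``in the same idea.'' Your write-up is essentially a self-contained version of what one finds in those references, with the cycle $(1)\Rightarrow(2)\Rightarrow(3)\Rightarrow(1)$ being the natural route. One small cosmetic point: your remark that boundedness is not needed in $(1)\Rightarrow(2)$ is accurate and worth keeping, but you might also note explicitly in $(3)\Rightarrow(1)$ that the finitely many initial indices $k_1,\dots,k_I$ where $a_{k_i}$ may still be large are absorbed by taking $n$ big enough---you do this implicitly with the ``for large $n$'' clause, but making the $I/n\to 0$ term visible would tighten the exposition.
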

By Lemma~\ref{lem:mean-proximal},
a subset $A$ of $X$ is mean proximal if and only if
\[\lim_{\varepsilon\to 0^+}\limsup_{n\to\infty}\frac{1}{n}
\#\bigl\{i\colon diam(f^i(A))<\varepsilon, 0\leq i\leq n-1\bigr\}=1.\]
Note that if a subset $A$ satisfies the above formula
then it is called \emph{distributionally proximal} in \cite{O11}.

Combining the ideas of uniformly chaotic set and mean proximal set,
we introduce the concept of uniformly mean chaotic set.
A subset $K\subseteq X $ is called a \emph{uniformly mean chaotic set}
if there are Cantor sets $C_1\subseteq C_2\subseteq \cdots $ such that
\begin{enumerate}
  \item $K=\bigcup_{i=1}^\infty C_i $ is a  recurrent subset of $X$ and
  also a mean proximal subset of $X$;
  \item for each $i=1, 2, \cdots $,  $C_i$ is uniformly recurrent;
  \item for each $i=1, 2, \cdots $,  $C_i$ is uniformly mean proximal.
\end{enumerate}

We have the following criterion for uniformly mean chaos.
\begin{thm}\label{thm:mean-proximal-chaos}
Let $(X,f)$ be a non-trivial transitive system.
If there exists a transitive point $x$ and a fixed point $p$
such that $(x,p)$ is mean proximal,
then there exists a dense, $f$-invariant,
uniformly mean chaotic set in $X$.
\end{thm}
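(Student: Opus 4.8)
The plan is to mimic the proof of Theorem~\ref{thm:uniform-chaos} almost verbatim, replacing the role of $Q(PROX,f)$ by $Q(\overline{PROX},f)$ and the hypothesis that $(x,p)$ is proximal by the hypothesis that $(x,p)$ is mean proximal. First I would set $Q=Q(RECUR,f)\cap Q(\overline{PROX},f)$ and let $\alpha_Q=\{R_n^Q\}_{n\in\mathbb{N}}$ be the associated coherent list. To apply Theorem~\ref{thm:Main-resut-KWT} I must verify that $Q$ is a $G_\delta$, hereditary subset of $C(X)$. That $Q(RECUR,f)$ is hereditary and $G_\delta$ was already recorded; for $Q(\overline{PROX},f)$ I would write it explicitly as a Borel set. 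Using Lemma~\ref{lem:mean-proximal}(2), membership in $Q(\overline{PROX},f)$ is equivalent to saying that for every $M$ the set $\{k\colon diam(f^k(A))<1/M\}$ has upper density one, which unwinds to a countable intersection over $M$ and over rational density thresholds of unions and intersections of the open conditions $\{A\colon diam(f^k(A))<1/M\}$; this exhibits $Q(\overline{PROX},f)$ as $G_\delta$ (or at worst a standard Borel set one can still feed into the machinery, but the display above shows it is genuinely $G_\delta$). Heredity is immediate since shrinking $A$ only decreases each $diam(f^k(A))$.

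Next I would produce the dense, countable, $f$-invariant, $\alpha_Q$-dependent set required by condition (4) of Theorem~\ref{thm:Main-resut-KWT}. Exactly as before, I take $A=Orb(x,f)$ for the given transitive point $x$; this is dense and $f$-invariant. The content is to check that every finite subset of $A$ is both uniformly recurrent and uniformly mean proximal. Uniform recurrence of finite subsets of an orbit of a transitive point follows precisely as in Theorem~\ref{thm:uniform-chaos}: transitivity gives times $m_i\to\infty$ with $f^{m_i}(x)\to x$, and continuity propagates this to each $f^k(x)$ simultaneously. For uniform mean proximality I would use that $(x,p)$ is mean proximal together with the fact that $p$ is fixed: since $\liminf_n \frac1n\sum_{k=1}^n d(f^k(x),p)=0$ and each $f^j(x)$ satisfies $d(f^k(f^j(x)),p)=d(f^{k+j}(x),p)$, a short estimate shows that the Cesàro averages of the diameters of $f^k(\{f^{j_1}(x),\dots,f^{j_m}(x)\})$ also have liminf zero, because each pairwise distance is bounded by the sum of two distances to the fixed point $p$. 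This yields $\alpha_Q$-dependence of $A$.

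Finally, invoking condition (2) of Theorem~\ref{thm:Main-resut-KWT}, I obtain a dense sequence $\{A_i\}$ in $CANTOR(X)$ with $\bigcup_{j=0}^N\bigcup_{i=1}^N f^j(A_i)\in Q$ for all $N\geq 1$. Since each $A_i\in Q\subset Q(RECUR,f)$ is uniformly recurrent, $f^j|_{A_i}$ is injective, so each $f^j(A_i)$ is again a Cantor set; setting $C_k=\bigcup_{j=0}^k\bigcup_{i=1}^k f^j(A_i)$ and $K=\bigcup_{k=1}^\infty C_k$ gives an increasing chain of Cantor sets whose union is dense, $f$-invariant, recurrent and mean proximal, with each $C_k$ uniformly recurrent and uniformly mean proximal---that is, a dense, $f$-invariant, uniformly mean chaotic set. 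The main obstacle I anticipate is the passage from mean proximality of the single pair $(x,p)$ to uniform mean proximality of arbitrary finite subsets of the orbit: the Cesàro liminf is not obviously preserved under taking maxima over finitely many pairs or under shifting the orbit by $f^j$, so I would lean on Lemma~\ref{lem:mean-proximal}, which converts the liminf-average condition into a statement about a single density-one time set, after which finite intersections of density-one sets remain density one and the argument goes through cleanly.
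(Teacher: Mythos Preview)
Your overall strategy is exactly the paper's: set $Q=Q(RECUR,f)\cap Q(\overline{PROX},f)$, feed the orbit $A=Orb(x,f)$ into condition~(4) of Theorem~\ref{thm:Main-resut-KWT}, and read off the uniformly mean chaotic set from condition~(2). The recurrence part and the final assembly of the $C_k$ are identical to the paper's argument.

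There is one genuine wrinkle in your handling of the mean-proximality step. Your fallback claim that ``finite intersections of density-one sets remain density one'' is \emph{false} for upper density (it is easy to build two subsets of $\mathbb{N}$ each of upper density one whose intersection is empty), so you cannot argue by producing, for each $j$, a separate density-one set of times where $f^j(x)$ is close to $p$ and then intersecting. The paper avoids this entirely: from Lemma~\ref{lem:mean-proximal}(3) one gets a \emph{single} subsequence $\{k_i\}$ of upper density one with $f^{k_i}(x)\to p$, and then continuity of $f^n$ together with $f^n(p)=p$ gives $f^{k_i}(f^n(x))=f^n(f^{k_i}(x))\to f^n(p)=p$ along the \emph{same} $\{k_i\}$, for every $n$. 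Hence the diameter of $f^{k_i}(F)$ tends to $0$ along $\{k_i\}$ for any finite $F\subset Orb(x,f)$, and Lemma~\ref{lem:mean-proximal} ((3)$\Rightarrow$(1)) finishes. Your direct Ces\`aro estimate via the triangle inequality runs into the same difficulty in disguise (the liminf of a finite sum of nonnegative sequences need not vanish even if each summand has liminf zero), so the continuity-along-one-sequence argument really is the point.

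A minor simplification: the paper exhibits $Q(\overline{PROX},f)$ as $G_\delta$ directly from the defining liminf,
\[
Q(\overline{PROX},f)=\bigcap_{M=1}^\infty \bigcup_{n=1}^\infty\Bigl\{A\in C(X)\colon \tfrac{1}{n}\sum_{k=1}^n diam(f^k(A))<\tfrac{1}{M}\Bigr\},
\]
which is shorter than unwinding the upper-density reformulation.
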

\begin{proof}
Let $Q=Q(RECUR,f)\cap Q(\overline{PROX},f)$ and
$\alpha_Q=\{R_n^Q\}_{n\in\mathbb{N}}$ be the coherent list on $X$
associated with $Q$.
Note that
\[ Q(\overline{PROX},f)= \bigcap_{M=1}^\infty \bigcup_{n=1}^\infty
\biggl\{A\in C(X)\colon \frac{1}{n}\sum_{k=1}^n diam(f^k(A))<\frac{1}{M}\biggr\} \]
It is clear that
$Q(\overline{PROX},f)$ is a hereditary $G_\delta$ subset of $C(X)$ (see also \cite[Theorem 58]{O11}).
Then $Q$ is also a hereditary $G_\delta$ subset of $C(X)$.
Note that the transitive point $x$ is mean proximal to the fixed point $p$.
Put $A=Orb(x,f)$.
It is clear that $A$ is  dense and $f$-invariant.
By the proof of Theorem~\ref{thm:uniform-chaos},
every finite subset of $A$ is uniformly recurrent.
As $(x,p)$ is mean proximal,
there exists a subsequence $\{k_i\}$ of upper density one
such that $\lim_{i\to\infty} d(f^{k_i}(x),p)=0$.
By the continuity of $f$ and $p$ is a fixed point,
for every $n\geq 1$, $\lim_{i\to\infty} d(f^{k_i}(f^{n}(x)),p)=0$.
So every finite subset of $A$ is uniformly mean proximal.
Then $A$ is $\alpha_Q$-dependent
and  the condition (4) of Theorem~\ref{thm:Main-resut-KWT} is satisfied.
By the condition (2) of  Theorem~\ref{thm:Main-resut-KWT},
there exists a dense sequence $\{A_i\}$ in $CANTOR(X)$
such that $\bigcup_{j=0}^N\bigcup_{i=1}^N f^j(A_i)\in Q$ for all $N\geq 1$.
As $A_i$ is uniformly  recurrent, $f^j|_{A_i}$ is injective,
Then $f^j(A_i)$ is a Cantor set, as so is $A_i$.
Let $C_k=\bigcup_{j=0}^k\bigcup_{i=1}^k f^j(A_i)$ and $C=\bigcup_{k=1}^\infty C_k$.
Then $C$ is a dense, $f$-invariant, mean proximal,
uniformly chaotic set.
\end{proof}

For a point $x\in X$, the \emph{mean proximal cell} of $X$ is defined as
\begin{align*}
  \overline{PROX}(x)&=\{y\in X\colon (x,y)\text{ is mean proximal}\}\\
  &=\bigcap_{M=1}^\infty \bigcup_{n=1}^\infty
\biggl\{y\in X\colon \frac{1}{n}\sum_{k=1}^n  d(f^k(x),f^k(y))<\frac{1}{M}\biggr\}
\end{align*}
It is clear that that $\overline{PROX}(x)$ is a $G_\delta$ subset of $X$.

We give some examples which are uniformly mean chaotic.
Recall that a dynamical system $(X,f)$ is \emph{exact} if
for every non-empty open subset $U$ of $X$,
there exists $n\geq 0$ such that $f^n(U)=X$.
It is clear that every exact system is strongly mixing.

\begin{prop}
If non-trivial dynamical system $(X,f)$ is exact and has a fixed point $p$,
then there exists a dense, $f$-invariant,
uniformly mean chaotic set in $X$.
\end{prop}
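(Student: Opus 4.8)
The plan is to deduce the statement from Theorem~\ref{thm:mean-proximal-chaos} by verifying its hypotheses. First I would record two consequences of exactness. An exact system is strongly mixing, hence transitive. Moreover a non-trivial exact system has no isolated points: if $x_0$ were isolated, then $\{x_0\}$ is a non-empty open set and exactness would force $\{f^n(x_0)\}=f^n(\{x_0\})=X$ for some $n$, making $X$ a singleton and contradicting non-triviality. Thus $X$ is perfect, and the standing hypotheses of Theorem~\ref{thm:mean-proximal-chaos} (a non-trivial transitive system on a perfect Polish space) are met. Since the fixed point $p$ is given, the only remaining task is to produce a \emph{transitive} point $x$ such that $(x,p)$ is mean proximal.

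The heart of the argument, and the only place where exactness is genuinely used, is to show that the mean proximal cell $\overline{PROX}(p)$ is residual in $X$. It is a $G_\delta$ set by the formula recorded just above the statement. For density, fix a non-empty open subset $U$ of $X$; by exactness there is $m\geq 0$ with $f^m(U)=X$, so since $p\in X$ there is a point $y\in U$ with $f^m(y)=p$. As $p$ is fixed, $f^k(y)=p$ for every $k\geq m$, whence the only possibly nonzero terms in $\sum_{k=1}^n d(f^k(y),p)$ are those with $k<m$, so that $\frac{1}{n}\sum_{k=1}^n d(f^k(y),p)\to 0$ as $n\to\infty$. In particular $\liminf_{n\to\infty}\frac{1}{n}\sum_{k=1}^n d(f^k(y),p)=0$, i.e.\ $(y,p)$ is mean proximal and $y\in \overline{PROX}(p)\cap U$. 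Hence $\overline{PROX}(p)$ is a dense $G_\delta$ subset of $X$.

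Finally I would combine this with transitivity. As $X$ is Polish and $(X,f)$ is transitive, the set $T$ of transitive points is a dense $G_\delta$ subset of $X$. Since $X$ is a Baire space, $T\cap \overline{PROX}(p)$ is again residual, and in particular non-empty; any $x$ in this intersection is a transitive point with $(x,p)$ mean proximal. Theorem~\ref{thm:mean-proximal-chaos} then yields a dense, $f$-invariant, uniformly mean chaotic set in $X$, which completes the proof. The only delicate point is the density of $\overline{PROX}(p)$, but the exactness hypothesis makes it routine through the preimage-of-$p$ construction above; every other ingredient is either stated in the preliminaries or follows from Theorem~\ref{thm:mean-proximal-chaos}.
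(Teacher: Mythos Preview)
Your proof is correct and follows essentially the same approach as the paper's: show that $\overline{PROX}(p)$ is a dense $G_\delta$ by using exactness to find, in any open set, a preimage of the fixed point $p$; intersect with the dense $G_\delta$ set of transitive points; and then invoke Theorem~\ref{thm:mean-proximal-chaos}. You supply a bit more detail (the explicit perfectness check and the Ces\`aro computation), but the structure and key idea coincide with the paper's proof.
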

\begin{proof}
As $(X,f)$ is exact, for every non-empty open subset $U$ of $X$ there exists $z\in U$
and $n\geq 1$ such that $f^n(z)=p$.
It is clear that $(z,p)$ is mean proximal.
Then $\overline{PROX}(p)$ is a dense $G_\delta$ subset of $X$.
As the collection of transitive point is also a dense $G_\delta$ subset of $X$,
there exists a transitive point $x$ such that $(x,p)$ is mean proximal.
Now the result follows from Theorem~\ref{thm:mean-proximal-chaos}.
\end{proof}

We say that a dynamical system $(X,f)$ has the \emph{specification property}
if for any  $\varepsilon>0 $,  there exists a positive integer $N_{\varepsilon} >0$
such that for any integer $s\geqslant 2$,
any $s$ points  $y_1, y_2, \cdots, y_s  \in X$,
and any $2s$ integers $0=j_1\leqslant k_1<j_2\leqslant k_2<\cdots <j_s\leqslant k_s $
with $j_{l+1}-k_l\geqslant N_{\varepsilon}$ for $l=1, 2, \cdots, s-1 $,
there is a point $x$ in $X$ such that,  for each interval
$[j_m, k_m]$,  $d(f^i(x), f^i(y_m) )<\varepsilon,  i\in
[j_m, k_m], m=1, 2, \cdots, s$.
It is easy to see that if $(X,f)$ has the specification property and
$f$ is surjective,
then $(X,f)$ is strongly mixing.

\begin{prop}\label{prop:specification-unform-mean-chaos}
Let $(X,f)$ be a non-trivial dynamical system with $f$ being surjective.
If $(X,f)$ has a fixed point $p$ and satisfies the specification property, 
then there exists a dense, $f$-invariant,
uniformly mean chaotic set in $X$.
\end{prop}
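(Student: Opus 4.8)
The plan is to reduce the statement to Theorem~\ref{thm:mean-proximal-chaos}. That theorem produces a dense, $f$-invariant, uniformly mean chaotic set from two ingredients: the system is non-trivial and transitive, and there is a transitive point $x$ that is mean proximal to the fixed point $p$. So the whole proof amounts to checking transitivity and manufacturing one such $x$. For the first ingredient, recall (as observed just before the statement) that the specification property together with surjectivity of $f$ forces $(X,f)$ to be strongly mixing, hence transitive; since $X$ is Polish, the set $\mathrm{Tran}(f)$ of transitive points is then a dense $G_\delta$ subset of $X$. Moreover a non-trivial transitive system with a fixed point has no isolated point, so $X$ is perfect, as required.

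For the second ingredient I would show that the mean proximal cell of the fixed point,
\[\overline{PROX}(p)=\Bigl\{y\in X:\ \liminf_{n\to\infty}\tfrac1n\textstyle\sum_{k=1}^{n} d(f^k(y),p)=0\Bigr\},\]
is residual in $X$; intersecting with $\mathrm{Tran}(f)$ via the Baire category theorem then yields the desired transitive point. Writing $a_n(y)=\frac1n\sum_{k=1}^{n} d(f^k(y),p)$, which is continuous in $y$ because $p$ is fixed, the liminf condition can be rewritten as
\[\overline{PROX}(p)=\bigcap_{M\ge 1}\ \bigcap_{N\ge 1}\ U_{M,N},\qquad U_{M,N}:=\bigcup_{n\ge N}\bigl\{y\in X: a_n(y)<\tfrac1M\bigr\},\]
so that each $U_{M,N}$ is open and it suffices to prove that each $U_{M,N}$ is dense.

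Density of $U_{M,N}$ is where specification enters. Fix a non-empty open set $U$, pick $y_0\in U$ and $\delta>0$ with $B(y_0,\delta)\subseteq U$, and choose $\varepsilon<\min\{\delta,\frac1{2M}\}$ with associated specification constant $N_\varepsilon$. Let $D:=\sup\{d(u,v):u,v\in X\}<\infty$, which is finite since the metric is bounded. Apply the specification property with the two points $y_0,p$ and the two time windows $[0,0]$ and $[j,k]$, where $j=\max\{N,N_\varepsilon\}$ and $k=j+L-1$ for a large integer $L$: this gives a point $y$ with $d(y,y_0)<\varepsilon<\delta$ (so $y\in U$) and $d(f^i(y),p)<\varepsilon$ for all $j\le i\le k$. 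Estimating $a_k(y)$ by splitting the orbit into the initial segment $1\le i\le j-1$, controlled only by $D$, and the long block $j\le i\le k$ of length $L$, controlled by $\varepsilon$, gives
\[a_k(y)\le\frac{(j-1)D+L\varepsilon}{j+L-1}\xrightarrow[L\to\infty]{}\varepsilon<\tfrac1{2M},\]
so for $L$ large enough $a_k(y)<\frac1M$ with $k\ge N$, i.e.\ $y\in U\cap U_{M,N}$. Hence every $U_{M,N}$ is dense open, and $\overline{PROX}(p)$ is a dense $G_\delta$.

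Finally, $\mathrm{Tran}(f)\cap\overline{PROX}(p)$ is a dense $G_\delta$, hence non-empty; any point $x$ in it is transitive and mean proximal to $p$, so Theorem~\ref{thm:mean-proximal-chaos} applies and produces the required set. The \emph{main obstacle} is the specification estimate itself: the contribution of the early part of the orbit to the Cesàro average is bounded only by the diameter $D$, and must be swamped by a single long late block on which the orbit stays $\varepsilon$-close to $p$. The conceptual point that keeps this elementary is that only two specification blocks are needed: rather than gluing infinitely many blocks into one point whose orbit approaches $p$ along a set of times of upper density one (which would force a compactness or limit argument), I prove density of each $U_{M,N}$ separately and let the Baire category theorem supply a single point lying in all of them at once.
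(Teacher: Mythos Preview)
Your proof is correct and follows the same strategy as the paper: reduce to Theorem~\ref{thm:mean-proximal-chaos} by showing that $\overline{PROX}(p)$ is a dense $G_\delta$ and intersecting it with the set of transitive points. The only difference is that the paper outsources the density of the mean proximal cell to an external reference (\cite[Lemma~7.4]{T16}, stated there for every $x\in X$), whereas you supply a direct, self-contained two-block specification argument for the particular case of the fixed point $p$; this is a cosmetic rather than a structural difference.
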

\begin{proof}
As $(X,f)$ has the specification property,
it is not hard to see that for every $x\in X$
the mean proximal cell $\overline{PROX}(x)$ is a dense $G_\delta$ subset of $X$
(see e.g. \cite[Lemma7.4]{T16}).
In particular, for the fixed point $p$,
the mean proximal cell $\overline{PROX}(p)$ is a dense $G_\delta$ subset of $X$.
So there exists a transitive point $x$ such that $(x,p)$ is mean proximal
and then the result follows from Theorem~\ref{thm:mean-proximal-chaos}.
\end{proof}

\subsection{Invariant \texorpdfstring{$\delta$}{delta}-scrambled sets}
Let $(X, f) $ be a dynamical system, $n\geqslant 2$ and $\delta>0$.
We say that an tuple $(x_1, x_2, \dotsc, x_n)\in X^n$ is $n$-$\delta$-scrambled
if it satisfies
\[ \liminf_{k\to \infty} \max_{1\leqslant i<j\leqslant n} d(f^k(x_i), f^k(x_j))=0 \]
\[ \limsup_{k\to \infty} \min_{1\leqslant i<j\leqslant n} d(f^k(x_i), f^k(x_j))\geq\delta. \]
A subset $S\subseteq X$ (with at least $n$-points)
 is called \emph{$n$-$\delta$-scrambled}
if every $n$ pairwise distinct points form an $n$-$\delta$-scrambled tuple.
Note that $2$-$\delta$-scrambled sets are
just the classical $\delta$-scrambled sets in the sense of Li and Yorke.

First we have the following result and note that
the case $n=2$ was proved in \cite[Theorem 4.3]{FHLO16}.

\begin{prop}\label{prop:uncontable-to-Cantor}
Let $(X,f)$ be a dynamical system, $n\geq 2$ and $\delta>0$.
If there exists an uncountable, $f$-invariant, $n$-$\delta$-scrambled set in $X$,
then there exists a subsystem $(Y,f)$ of $(X,f)$ such that
there exists a dense, $\sigma$-Cantor, $f$-invariant,
$n$-$\delta$-scrambled set in $Y$.
\end{prop}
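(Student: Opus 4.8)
The plan is to recast the $n$-$\delta$-scrambled condition as a $G_\delta$ relation string and then invoke the local dynamical Kuratowski--Mycielski theorem, Corollary~\ref{cor:local-f-K-W-Thm}. First I would let $R\subset X^n$ be the set of all $n$-$\delta$-scrambled $n$-tuples. Each of the two defining requirements, $\liminf_{k\to\infty}\max_{1\le p<q\le n}d(f^k(x_p),f^k(x_q))=0$ and $\limsup_{k\to\infty}\min_{1\le p<q\le n}d(f^k(x_p),f^k(x_q))\ge\delta$, can be written as a countable intersection of countable unions of open subsets of $X^n$, since for each fixed $k$ both $(x_1,\dots,x_n)\mapsto\max_{p<q}d(f^k(x_p),f^k(x_q))$ and the corresponding minimum are continuous; hence $R$ is a $G_\delta$ subset of $X^n$. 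Following the remark on collections of countable relations after Lemma~\ref{lem:G-delta-relation}, the single relation $R$ generates a $G_\delta$ relation string $\alpha$ whose dependent sets are precisely the $n$-$\delta$-scrambled subsets of $X$. By Lemma~\ref{lem:G-delta-relation} the collection $Q:=\mathbf{C}(\alpha)$ is a $G_\delta$, finitely determined (in particular hereditary) subset of $C(X)$, and its coherent list $\alpha_Q$ has the same dependent sets as $\alpha$; thus being $\alpha_Q$-dependent is synonymous with being $n$-$\delta$-scrambled.

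Next I would apply Corollary~\ref{cor:local-f-K-W-Thm} to this $Q$. The hypothesis supplies an uncountable, $f$-invariant, $\alpha_Q$-dependent set, which is exactly condition~(2) of the corollary, so condition~(1) produces a Cantor set $A$ with $\bigcup_{i=0}^N f^i(A)\in Q$ for every $N\ge1$. By the definition of $Q^f$ this says that $A$ is $\alpha_Q^f$-dependent, so by Claim~2 in the proof of Theorem~\ref{thm:Main-resut-KWT} the $f$-invariant set $\widehat A:=\bigcup_{j=0}^\infty f^j(A)$ is $\alpha_Q$-dependent, that is, $n$-$\delta$-scrambled.

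The step I expect to be the main obstacle, and the only place where the scrambled hypothesis does real work beyond proximality, is verifying that $\widehat A$ is $\sigma$-Cantor; for this it suffices to show that each restriction $f^j|_A$ is injective. Here the plan is to use scrambledness itself: suppose $x\ne y$ in $A$ satisfy $f^j(x)=f^j(y)$. Since $A$ is uncountable I may choose $n-2$ further pairwise distinct points $z_3,\dots,z_n\in A$ and consider the tuple $(w_1,\dots,w_n):=(x,y,z_3,\dots,z_n)$, which is $n$-$\delta$-scrambled; but for every $k\ge j$ we have $d(f^k(w_1),f^k(w_2))=d(f^k(x),f^k(y))=0$, so $\min_{1\le p<q\le n}d(f^k(w_p),f^k(w_q))=0$ for all such $k$, and therefore $\limsup_{k\to\infty}\min_{1\le p<q\le n}d(f^k(w_p),f^k(w_q))=0<\delta$, a contradiction. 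Consequently each $f^j|_A$ is injective, each $f^j(A)$ is a Cantor set, and $\widehat A$ is $\sigma$-Cantor. Finally I would set $Y:=\overline{\widehat A}$; by continuity $Y$ is a closed $f$-invariant subset, hence a subsystem of $(X,f)$, and $\widehat A$ is a dense, $\sigma$-Cantor, $f$-invariant, $n$-$\delta$-scrambled set in $Y$, as required.
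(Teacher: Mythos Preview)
Your proposal is correct and follows essentially the same route as the paper: show the $n$-$\delta$-scrambled relation is $G_\delta$, pass to the hereditary $Q=\mathbf{C}(\alpha)$, and apply Corollary~\ref{cor:local-f-K-W-Thm} to obtain a Cantor set $A$ with $\bigcup_{j\ge0}f^j(A)$ $n$-$\delta$-scrambled, then take $Y$ to be its closure. The one point where you go further than the paper is the explicit verification that each $f^j|_A$ is injective (using the $\limsup\min\ge\delta$ condition on an $n$-tuple containing $x,y$), which ensures $\widehat A$ is genuinely $\sigma$-Cantor; the paper's proof leaves this step implicit.
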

\begin{proof}
Recall that
\begin{align*}
PROX_n(f)&=\Bigl\{(x_1,x_2,\dotsc,x_n)\in X^n\colon
\liminf_{k\to \infty} \max_{1\leqslant i<j\leqslant n} d(f^k(x_i), f^k(x_j))=0\Bigr\}\\
  &= \bigcap_{M=1}^\infty\bigcup_{k=1}^\infty
  \biggl\{(x_1,x_2,\dotsc,x_n)\in X^n\colon \max_{1\leqslant i<j\leqslant n} d(f^k(x_i), f^k(x_j))<\frac{1}{M}\biggr\},
\end{align*}
and let
\begin{align*}
SEP_n(f,\delta)&=\Bigl\{(x_1,x_2,\dotsc,x_n)\in X^n\colon
\limsup_{k\to \infty} \min_{1\leqslant i<j\leqslant n}
d(f^k(x_i), f^k(x_j))\geq\delta\Bigr\}\\
&= \bigcap_{M=1}^\infty\bigcap_{N=1}^\infty\bigcup_{k=N}^\infty
  \biggl\{(x_1,x_2,\dotsc,x_n)\in X^n\colon
  \min_{1\leqslant i<j\leqslant n}
d(f^k(x_i), f^k(x_j))>\delta-\frac{1}{M}\biggr\},
\end{align*}
It is easy to see that both $PROX_n(f)$ and $SEP_n(f,\delta)$
are $G_\delta$ subsets of $X^n$.
Let $R=PROX_n(f)\cap SEP_n(f,\delta)$.
Then a subset $A$ of $X$ is $n$-$\delta$-scrambled if and only if it is $R$-dependent.
If there exists an uncountable, $f$-invariant, $n$-$\delta$-scrambled set in $X$,
applying Corollary~\ref{cor:local-f-K-W-Thm} to $\mathbf{C}(R)$,
there exists a Cantor set $A$ such that
$\bigcup_{i=0}^\infty f^i(A)$ is $n$-$\delta$-scrambled.
Let $Y=\overline{\bigcup_{i=0}^\infty f^i(A)}$.
Then the subsystem $(Y,f)$ is as required.
\end{proof}

We have the following characterization of existence of
invariant $n$-$\delta$-scrambled sets in any transitive system with a fixed point.
Note that the case $n=2$ was essentially proved in \cite[Theorem B]{FHLO16}.
\begin{thm}\label{thm:n-delta-scrambed}
Assume that $(X,f)$ be a non-trivial dynamical system and $n\geq 2$.
If $(X,f)$ is  transitive and has a fixed point,
then  the following conditions are equivalent:
\begin{enumerate}
  \item there exists a dense, $f$-invariant,
$n$-$\delta$-scrambled, uniformly chaotic set in $X$ for some $\delta>0$;

\item there exists $\delta'>0$ such that
for any integers $0=k_1<k_2<k_3<\dotsb<k_{n}$
there exists $z\in X$ satisfying $d(f^{k_i}(z),f^{k_j}(z))\geq \delta'$
for all $1\leq i<j\leq n$.
\end{enumerate}
\end{thm}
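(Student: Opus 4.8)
The plan is to prove the two directions separately, using the dynamical Kuratowski-Mycielski machinery (Theorem~\ref{thm:Main-resut-KWT}) together with Theorem~\ref{thm:uniform-chaos} for the construction, and a Baire-category/transitivity argument for the converse.

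For the direction (2)$\Rightarrow$(1), I would fix the $\delta'>0$ supplied by condition (2) and aim to produce a dense $f$-invariant $\alpha_Q$-dependent set for the hereditary $G_\delta$ collection $Q=Q(RECUR,f)\cap Q(PROX,f)\cap \mathbf{C}(SEP_n(f,\delta))$ for a suitable $\delta$ (likely $\delta=\delta'/2$ or similar). The recurrence and proximality parts are exactly the ingredients already handled in the proof of Theorem~\ref{thm:uniform-chaos}, since $(X,f)$ is transitive with a fixed point; the new content is the separation relation $SEP_n(f,\delta)$, whose defining set I would show is dense in $X^n$ using condition (2). Concretely, by Theorem~\ref{thm:Main-resut-KWT}~(3) it suffices to check that the set of $n$-tuples $(x_1,\dotsc,x_n)$ with $\bigcup_{i=0}^N f^i(\{x_1,\dotsc,x_n\})\in Q$ for all $N$ is dense $G_\delta$ in $X^n$; density is the point, and condition (2) is precisely engineered to place, near any prescribed tuple, a point $z$ whose iterates $f^{k_1}(z),\dotsc,f^{k_n}(z)$ are $\delta'$-separated, forcing $SEP_n$-density after combining with transitivity. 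Then Theorem~\ref{thm:Main-resut-KWT}~(2) yields the dense sequence $\{A_i\}$ of Cantor sets with $\bigcup_{j=0}^N\bigcup_{i=1}^N f^j(A_i)\in Q$, and since uniform recurrence makes each $f^j|_{A_i}$ injective, the union $C=\bigcup_k C_k$ with $C_k=\bigcup_{j=0}^k\bigcup_{i=1}^k f^j(A_i)$ is the desired dense, $f$-invariant, $n$-$\delta$-scrambled, uniformly chaotic set, exactly as in Theorem~\ref{thm:uniform-chaos}.

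For the direction (1)$\Rightarrow$(2), I would argue by contradiction (or directly) from the existence of a dense $n$-$\delta$-scrambled uniformly chaotic set $K$. Density of $K$ means that for any nonempty open sets $U_1,\dotsc,U_n$ I can pick $n$ pairwise distinct points $x_1,\dotsc,x_n\in K$ with $x_i\in U_i$; since $K$ is $n$-$\delta$-scrambled, $\limsup_{k}\min_{i<j}d(f^k(x_i),f^k(x_j))\geq\delta$, so there is a time $k$ at which all pairwise distances exceed $\delta/2$. The task is to convert this separation of $n$ distinct orbits into the required single-point statement: a prescribed increasing sequence $0=k_1<\dotsb<k_n$ of times and one point $z$ whose iterates at those times are $\delta'$-separated. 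Here I would lean on transitivity of $(X,f)$ — a transitive point $z_0$ has a dense orbit, so by choosing the right window of times I can realize the $n$ separated positions along a single orbit; weak mixing of the relevant product or repeated use of transitivity on $(X^n,f^{(n)})$ will let me prescribe the times $k_1,\dotsc,k_n$. I would take $\delta'$ to be a fixed fraction of $\delta$ to absorb the approximation errors.

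The main obstacle I anticipate is the (1)$\Rightarrow$(2) direction: converting the \emph{asymptotic} separation of $n$ distinct points guaranteed by the scrambled condition into a statement about a \emph{single} point whose iterates at \emph{arbitrarily prescribed} times $k_1<\dotsb<k_n$ are separated. The scrambled condition gives separation only along some unspecified subsequence of times and only for points that are mutually proximal, so some care is needed to decouple the prescribed time-gaps from the times where separation happens; I expect transitivity (and possibly weak mixing of the system, which would follow from the rich structure forced by an $n$-$\delta$-scrambled uniformly chaotic set) to be the bridge, together with a compactness/continuity argument to pass from approximate separation of nearby points to exact separation at a single point. The (2)$\Rightarrow$(1) direction should be comparatively routine given Theorem~\ref{thm:uniform-chaos} and the $G_\delta$ computations already displayed for $PROX_n$ and $SEP_n(f,\delta)$ in the proof of Proposition~\ref{prop:uncontable-to-Cantor}.
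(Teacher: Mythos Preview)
Your (1)$\Rightarrow$(2) direction misses the key idea and is consequently far harder than it needs to be. You never use the $f$-\emph{invariance} of the scrambled set, which is exactly what makes this implication almost immediate. Pick any non-periodic point $x$ in the $f$-invariant $n$-$\delta$-scrambled set $S$ (such a point exists since $S$ contains at most one periodic point). Given $0=k_1<k_2<\dotsb<k_n$, the points $f^{k_1}(x),\dotsc,f^{k_n}(x)$ are pairwise distinct and, because $S$ is $f$-invariant, all lie in $S$. Hence they form an $n$-$\delta$-scrambled tuple, so there exists $m$ with $\min_{i<j}d(f^{m+k_i}(x),f^{m+k_j}(x))\geq\delta/2$; set $z=f^m(x)$ and $\delta'=\delta/2$. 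No density of $S$, no transitivity, and certainly no weak mixing of $(X^n,f^{(n)})$ (which is not assumed) is needed. The ``conversion problem'' you flag as the main obstacle is an artifact of ignoring the invariance hypothesis.

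For (2)$\Rightarrow$(1) your overall framework is correct, but the paper verifies condition~(4) of Theorem~\ref{thm:Main-resut-KWT} rather than condition~(3), which is both simpler and closer to what you already sketched for Theorem~\ref{thm:uniform-chaos}: take a transitive point $x$ and show directly that $A=Orb(x,f)$ is a dense, countable, $f$-invariant, $\alpha_Q$-dependent set. The recurrence and proximality of every finite subset of $A$ come verbatim from the proof of Theorem~\ref{thm:uniform-chaos}. For the separation part, to check $(f^{p_1}(x),\dotsc,f^{p_n}(x))\in SEP_n(f,\delta')$ one applies condition~(2) with $k_i=p_i-p_1$ to obtain $z$, then uses transitivity of $x$ to find $m_t\to\infty$ with $f^{m_t}(f^{p_1}(x))\to z$; by continuity $f^{m_t}(f^{p_i}(x))\to f^{k_i}(z)$, and the required $\limsup$ separation follows with $\delta=\delta'$ (no halving needed). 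This avoids having to verify density of the more complicated sets $R_m^{Q,f}$ for every $m$.
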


\begin{proof}
(1)$\Rightarrow$(2)
Let $S$ be an $f$-invariant $n$-$\delta$-scrambled set.
Note that $S$ contains at most one periodic point.
Hence we can pick up a point $x\in S$ which is not a periodic point.
Let $\delta'=\frac{\delta}{2}$.
For any $0=k_1<k_2<k_3<\dotsb<k_{n}$,
the tuple $(f^{k_1}(x),f^{k_2}(x),\dotsc,f^{k_n}(x))$ is  $n$-$\delta$-scrambled.
Then there exists $m\geq 0$ such that
$d(f^m(f^{k_i}(x)), f^m(f^{k_j}(x)))\geq \frac{\delta}{2}=\delta'$
for all $1\leq i<j\leq n$.
Let $z=f^m(x)$. Then we have $d(f^{k_i}(z),f^{k_j}(z))\geq \delta'$
for all $1\leq i<j\leq n$.

(2)$\Rightarrow$(1)
Let $\delta=\delta'$ and
\[SEP_n(f,\delta)=\Bigl\{(x_1,x_2,\dotsc,x_n)\in X^n\colon
\limsup_{k\to \infty} \min_{1\leqslant i<j\leqslant n}
d(f^k(x_i), f^k(x_j))\geq\delta\Bigr\}.\]
Let $Q=Q(RECUR,f)\cap Q(PROX,f)\cap \mathbf{C}(SEP_n(f,\delta))$ and
$\alpha_Q=\{R_n^Q\}_{n\in\mathbb{N}}$ be the coherent list on $X$
associated with $Q$.
Note that $SEP_n(f,\delta)$ is a $G_\delta$ subset of $X^n$ and
then $Q$ is a $G_\delta$, hereditary subset of $C(X)$.
Pick a transitive point $x\in X$ and put $A=Orb(x,f)$.
We claim that $A$ is in $SEP_n(f,\delta)$.
It is sufficient to show that $(f^{p_1}(x),f^{p_2}(x),\dotsc,f^{p_n}(x))\in R_n$
for all $0\leq p_1<p_2<p_3<\dotsb<p_{n}$.
Let $k_1=0$ and $k_i=p_{i}-p_{i-1}$ for $i=2,\dotsc,n$.
There exists $z\in X$ satisfying $d(f^{k_i}(z),f^{k_j}(z))\geq \delta'$
for all $1\leq i<j\leq n$.
As $x$ is a transitive point,
there exists an increasing sequence  $\{m_t\}$ of positive integers
such that $\lim_{t\to\infty}f^{m_t}(f^{p_1}(x))=z$.
By the continuity of $f$, $\lim_{t\to\infty}f^{m_t}(f^{p_i}(x))=f^{p_i-p_1}(z)=f^{k_i}(z)$
for $i=1,2,\dotsc,n$.
Then for $1\leq i<j\leq n$,
\[\lim_{t\to\infty}d(f^{m_t}(f^{p_i}(x)),f^{m_t}(f^{p_j}(x)))=d(f^{k_i}(z),f^{k_j}(z))
\geq \delta'=\delta,\]
which implies that
$(f^{p_1}(x),f^{p_2}(x),\dotsc,f^{p_n}(x))\in  SEP_n(f,\delta)$.
As it is proved in~Theorem~\ref{thm:uniform-chaos},
every finite subset of $A$ is in $Q(RECUR,f)\cap Q(PROX,f)$.
Then $A$ is $\alpha_Q$-dependent and
the condition (4) of Theorem~\ref{thm:Main-resut-KWT} is satisfied.
By the condition (2) of  Theorem~\ref{thm:Main-resut-KWT},
there exists a dense sequence $\{A_i\}$ in $CANTOR(X)$
such that $\bigcup_{j=0}^N\bigcup_{i=1}^N f^j(A_i)\in Q$ for all $N\geq 1$.
Let $C_k=\bigcup_{j=0}^k\bigcup_{i=1}^k f^j(A_i)$ and $C=\bigcup_{k=1}^\infty C_k$.
Then $C$ is a dense,  $f$-invariant,
$n$-$\delta'$-scrambled, uniformly chaotic set.
\end{proof}

Recall that a dynamical system $(X,f)$ is \emph{uniformly rigid}
if $X$ is a uniformly recurrent set.
It is easy to see that a dynamical system $(X,f)$ is not uniformly rigid
if and only if there exists $\delta>0$ such that for any $n\geq 1$ there exists
$z_n\in X$ with $d(f^n(z_n),z_n)\geq\delta$.
It is proved in \cite[Theorem C]{FHLO16} that
if a non-trivial compact dynamical system $(X, f)$ is transitive then $(X, f)$
contains a dense, $\sigma$-Cantor, $f$-invariant $\delta$-scrambled set
for some $\delta$ if and only
if it has a fixed point and is not uniformly rigid.
By Theorem~\ref{thm:n-delta-scrambed}, we can strengthen this result as follows.

\begin{cor}
If a non-trivial dynamical system $(X,f)$ is transitive and has a fixed point,
then there exists
a dense, $f$-invariant,
$\delta$-scrambled, uniformly chaotic set in $X$ for some $\delta>0$
if and only it $(X,f)$ is not uniformly rigid.
\end{cor}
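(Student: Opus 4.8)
The plan is to deduce the corollary directly from Theorem~\ref{thm:n-delta-scrambed} specialized to $n=2$, since a $\delta$-scrambled set is by definition a $2$-$\delta$-scrambled set. The standing hypotheses of the corollary---that $(X,f)$ is non-trivial, transitive and has a fixed point---are exactly the assumptions of Theorem~\ref{thm:n-delta-scrambed}, so that theorem applies verbatim and provides an equivalence between its conditions (1) and (2). The entire task thus reduces to reading off what these two conditions say when $n=2$.

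First I would unwind condition (1) of Theorem~\ref{thm:n-delta-scrambed} for $n=2$: it asserts the existence, for some $\delta>0$, of a dense, $f$-invariant, $2$-$\delta$-scrambled, uniformly chaotic set in $X$, which is precisely the object appearing in the corollary. No further work is needed on this side of the equivalence.

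Next I would translate condition (2) for $n=2$. The only admissible tuple of integers is $0=k_1<k_2$, so condition (2) becomes: there exists $\delta'>0$ such that for every positive integer $k$ there is a point $z\in X$ with $d(z,f^{k}(z))\geq\delta'$. Comparing this with the characterization of non-uniform rigidity recorded just before the corollary---namely that $(X,f)$ is not uniformly rigid if and only if there is $\delta>0$ such that for every $n\geq 1$ some $z_n\in X$ satisfies $d(f^{n}(z_n),z_n)\geq\delta$---one sees that the two statements coincide literally. Hence condition (2) of Theorem~\ref{thm:n-delta-scrambed} (with $n=2$) holds if and only if $(X,f)$ is not uniformly rigid.

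Combining these two identifications, the equivalence (1)$\Leftrightarrow$(2) of Theorem~\ref{thm:n-delta-scrambed} becomes exactly the asserted equivalence of the corollary, completing the argument. The only point requiring care is verifying that the quantifier structure of condition (2) in the case $n=2$ matches the stated characterization of non-uniform rigidity; since both require a single $\delta$ that works uniformly over all iterates while permitting the witness point to depend on the iterate, there is no gap, and I expect no substantive obstacle.
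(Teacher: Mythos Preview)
Your proposal is correct and follows exactly the approach intended in the paper: the corollary is stated immediately after Theorem~\ref{thm:n-delta-scrambed} with no separate proof, and the surrounding text makes clear it is obtained by specializing that theorem to $n=2$ and invoking the characterization of non-uniform rigidity recorded just before the statement. Your unpacking of condition~(2) for $n=2$ and its identification with that characterization is precisely the intended (and only) step.
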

As every strongly mixing system can never be uniformly rigid,
the following result is immediate.
Note that this result was first proved in \cite[Theorem 4]{BGO10}
without the conclusion of uniformly chaotic set.
\begin{cor}
If a non-trivial dynamical system $(X,f)$ is strongly mixing and has a fixed point,
then there exists a dense, $f$-invariant,
 $\delta$-scrambled, uniformly chaotic set in $X$ for some $\delta>0$.
\end{cor}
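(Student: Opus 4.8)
The plan is to deduce this corollary directly from the previous one, so the only thing that genuinely needs checking is that a non-trivial strongly mixing system is never uniformly rigid; once that is in hand, all hypotheses of the preceding corollary are satisfied. First I would record the trivial implications: strong mixing implies weak mixing, which in turn implies transitivity, so $(X,f)$ is a non-trivial transitive system with a fixed point. Thus the whole argument reduces to verifying that $(X,f)$ is not uniformly rigid, i.e.\ that there exists $\delta>0$ such that for every $n\geq 1$ there is a point $z_n\in X$ with $d(f^n(z_n),z_n)\geq\delta$.

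For the core step I would fix two distinct points of $X$ and choose disjoint open balls $U$ and $V$ around them with a positive separation $d(U,V)\geq\delta_0>0$, which is possible because $X$ is not a singleton. Applying the defining property of strong mixing to the pair $(U,V)$ yields an $N\geq 1$ with $U\cap f^{-n}(V)\neq\emptyset$ for all $n\geq N$; choosing $z_n\in U$ with $f^n(z_n)\in V$ gives $d(f^n(z_n),z_n)\geq\delta_0$ for every $n\geq N$. For the finitely many remaining exponents $1\leq n<N$ I would argue that $f^n\neq\mathrm{id}$: if $f^n=\mathrm{id}$ for some such $n$, then $f^{-kn}(V)=V$ for all $k$, and strong mixing applied along the arithmetic progression $kn$ forces $U\cap V\neq\emptyset$, contradicting disjointness. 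Hence $\sup_{x\in X}d(f^n(x),x)>0$ for each of these $n$, and taking $\delta$ to be the minimum of $\delta_0$ and half of these finitely many positive suprema produces a single $\delta>0$ valid for all $n\geq 1$.

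The main obstacle is precisely this uniformity over all iterates: strong mixing settles all sufficiently large powers at once with the separation constant $\delta_0$, but the small powers must be excluded by a separate argument, which is exactly where non-triviality and the observation $f^n\neq\mathrm{id}$ come in. With $(X,f)$ now known to be transitive, to possess a fixed point, and to fail uniform rigidity, the desired dense, $f$-invariant, $\delta$-scrambled, uniformly chaotic set follows immediately from the previous corollary.
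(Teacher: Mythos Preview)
Your proposal is correct and follows exactly the same route as the paper: both reduce the statement to the preceding corollary by noting that a non-trivial strongly mixing system cannot be uniformly rigid. The paper simply asserts this fact in one line (``every strongly mixing system can never be uniformly rigid''), while you supply a careful verification of it; your separation into large $n$ (handled directly by strong mixing) and small $n$ (handled via $f^n\neq\mathrm{id}$) is a clean way to extract the uniform $\delta$.
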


Let $\Lambda_m = \{0, 1,\dotsc,m-1\}$ be equipped with the discrete topology.
Let $\Sigma_m =\Lambda_m^{\mathbb{Z}_+}$ denote the set of
all infinite sequence of symbols in $\Lambda_m$
indexed by the non-negative integers $\mathbb{Z}_+$ with the product topology.
A compatible metric on  $\Sigma_m$ is defined as follows: for any two points $x,y\in\Sigma_m $,
\[ d(x,y)=\left\lbrace \begin{array}{ll}
0,& \text{ if } x=y,\\ \dfrac{1}{k+1},& \text{ if } x\neq y \text{ and } k=\min\{i\geq 0,x_i\neq y_i \}.
\end{array}\right.  \]
The \emph{shift} transformation is a continuous map
$\sigma:\Sigma_m\to\Sigma_m$ given by
$(\sigma(x))_i=x_{i+1}$, where $x=x_0x_1x_2\dotsb$.
It is clear that $\sigma$ is a continuous surjection.
The dynamical system $(\Sigma_m,\sigma)$ is called the \emph{full shift}
on $m$ symbols.

\begin{prop}
For the full shift $(\Sigma_2,\sigma)$,
 there exists a dense, $f$-invariant,
 $n$-$\frac{1}{n-1}$-scrambled, uniformly chaotic set in $X$ for all $n\geq 2$.
\end{prop}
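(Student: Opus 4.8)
The plan is to obtain the statement as an application of Theorem~\ref{thm:n-delta-scrambed} to the system $(\Sigma_2,\sigma)$. This system is non-trivial, it is strongly mixing and hence transitive, and it has a fixed point, for example the constant sequence $0^{\infty}$; in particular all hypotheses of Theorem~\ref{thm:n-delta-scrambed} are met for every $n\ge 2$. Moreover, the proof of the implication (2)$\Rightarrow$(1) in that theorem produces a uniformly chaotic set that is $n$-$\delta'$-scrambled with exactly the constant $\delta'$ supplied by condition (2). Therefore it suffices to verify condition (2) with the prescribed value $\delta'=\frac{1}{n-1}$: for arbitrary integers $0=k_1<k_2<\dots<k_n$ I must exhibit a point $z\in\Sigma_2$ such that $d(\sigma^{k_i}(z),\sigma^{k_j}(z))\ge\frac{1}{n-1}$ for all $1\le i<j\le n$.

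The next step is to rewrite this separation condition combinatorially. By the definition of the metric, two points of $\Sigma_2$ are at distance at least $\frac{1}{n-1}$ exactly when they disagree at some coordinate among $\{0,1,\dots,n-2\}$. Since $(\sigma^{k}(z))_p=z_{k+p}$, the first $n-1$ coordinates of $\sigma^{k_i}(z)$ are the length-$(n-1)$ word $W_i=z_{k_i}z_{k_i+1}\cdots z_{k_i+n-2}$. Consequently the required inequalities hold for all pairs if and only if the $n$ words $W_1,\dots,W_n$ are pairwise distinct, and the problem reduces to finding a binary sequence whose $n$ factors of length $n-1$, read off at the prescribed positions, are all different.

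The hard part is that the positions $k_i$ are arbitrary, so the windows $[k_i,k_i+n-2]$ may overlap and their contents are not freely assignable; this is where a naive ``prescribe $n$ distinct words'' argument breaks down. I would circumvent this by the probabilistic method. Choose the coordinates of $z$ lying in $\bigcup_{i}[k_i,k_i+n-2]$ independently and uniformly in $\{0,1\}$ (the remaining coordinates being immaterial). For a fixed pair $i<j$ with gap $g=k_j-k_i\ge 1$, the event $W_i=W_j$ is exactly the condition $z_m=z_{m+g}$ for all $m\in[k_i,k_i+n-2]$, i.e.\ that the block of $z$ on $[k_i,k_j+n-2]$, which has length $g+n-1$, is $g$-periodic. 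Such a block is determined by its first $g$ symbols, so this event has probability $2^{g}/2^{g+n-1}=2^{-(n-1)}$, irrespective of the gap. A union bound over the $\binom{n}{2}$ pairs then yields
\[
\Pr\Bigl(\exists\, i<j:\ W_i=W_j\Bigr)\le\binom{n}{2}2^{-(n-1)}=\frac{n(n-1)}{2^{n}}<1,
\]
the final inequality being the elementary bound $n(n-1)<2^{n}$, valid for all $n\ge 2$. Hence some choice of $z$ makes all the $W_i$ distinct.

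This $z$ verifies condition (2) of Theorem~\ref{thm:n-delta-scrambed} with $\delta'=\frac{1}{n-1}$, and the theorem then delivers the desired dense, $\sigma$-invariant, $n$-$\frac{1}{n-1}$-scrambled, uniformly chaotic set in $\Sigma_2$. The single delicate point in this plan is the exact value of the coincidence probability when the windows overlap, which is settled cleanly by the periodicity observation; an explicit (non-probabilistic) construction of $z$ is also possible but less transparent.
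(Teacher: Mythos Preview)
Your proof is correct and follows the same overall strategy as the paper: both invoke Theorem~\ref{thm:n-delta-scrambed} and reduce the problem to showing that for any $0=k_1<\dots<k_n$ one can find $z\in\Sigma_2$ whose length-$(n-1)$ factors $z[k_i,k_i+n-2]$ are pairwise distinct. The difference lies in how that combinatorial fact is established. The paper argues by induction on $n$, building $z$ explicitly by extending a witness for $k_1,\dots,k_{n-1}$ and handling separately the cases where the new window $[k_n,k_n+n-2]$ is disjoint from or overlaps the previous one. Your probabilistic argument is a genuinely different route: the key observation that the event $W_i=W_j$ forces $g$-periodicity of a block of length $g+n-1$, hence has probability exactly $2^{-(n-1)}$ regardless of overlap, makes the union bound immediate. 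This buys a shorter and more uniform proof that avoids the case analysis; the paper's inductive construction, on the other hand, produces an explicit $z$ and would adapt more readily if one wanted a concrete description of the witness.
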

\begin{proof}
According to Theorem~\ref{thm:n-delta-scrambed},
it is sufficient to prove that for any $ n\geq2 $ and any integers $ 0=k_1<k_2<\cdots<k_n $
there exists $z\in \Sigma_2$ such that the words
$z[k_i,k_i+n-2]$, $i=1,2,\dotsc,n$, are pairwise distinct.
We prove this by induction on $n$.

For $n=2$ and $0=k_1<k_2$,
we define a point $z\in \Sigma_2$ in the following way:
$z[0]=0$, $z[k_2]=1$ and other undetermined coordinates can be defined freely.

Assume the result has been established for $n=m-1 $ and consider the case $n=m$.
Fix $m$ integers $0=k_1<k_2<\cdots<k_{m-1}<k_m$.
For $0=k_1<k_2<\cdots<k_{m-1}$, by the assumption
there exists a point $x\in \Sigma_2$
such that $x[k_i,k_i+m-3]$, $i=1,2,\dotsc,m-1$, are pairwise distinct.
We define a point $z\in \Sigma_2$ as follows.
First for $i=1,2,\dotsc,m-1$, let $z[k_i,k_i+m-2]=x[k_i,k_i+m-2]$.
If $ k_m\geq k_{m-1}+m-1$,
then we define $ z[k_m,k_m+m-2]$ be any word which
is different from all the words $z[k_i,k_i+m-2]$, $i=1,2,\dotsc,m-1$.
Otherwise, there exists $ 1\leq l\leq m-2$ such that $k_m=k_{m-1}+l$.
In this case, $z[k_m,k_{m-1}+m-2]$ has been already defined.
If there exists $1\leq i_0\leq m-1$ such that
$z[k_{i_0}, k_{i_0}+m-l-2]=z[k_m,k_{m-1}+m-2]$,
then we define $z[k_m,k_m+m-3]=z[k_{i_0},k_{i_0}+m-3]$
and $z[k_m+m-2]$ is different from $z[k_{i_0}+m-2]$.
If there is no such $i_0$ then we can define $z[k_{m-1}+m,k_m+m-2]$ freely.
All in all, the words $z[k_i,k_i+m-2]$, $i=1,2,\dotsc,m$, are pairwise distinct.
\end{proof}

\begin{prop}\label{prop:exact-delta-chaos}
If a non-trivial compact dynamical system $(X,f)$ is exact
and has a fixed point, then there exists a dense, $f$-invariant,
$3$-$\delta$-scrambled, uniformly mean chaotic set in $X$ for some $\delta>0$.
\end{prop}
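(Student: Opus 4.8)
The plan is to feed a single dense orbit into the dynamical Kuratowski--Mycielski Theorem, combining the four relevant hereditary collections exactly as in the proofs of Theorems~\ref{thm:uniform-chaos}, \ref{thm:mean-proximal-chaos} and \ref{thm:n-delta-scrambed}. Fix a constant $\delta>0$ (to be determined) and set
\[
Q=Q(RECUR,f)\cap Q(PROX,f)\cap Q(\overline{PROX},f)\cap\mathbf{C}(SEP_3(f,\delta)),
\]
with $\alpha_Q=\{R_n^Q\}$ the associated coherent list. The first three factors are hereditary $G_\delta$ subsets of $C(X)$, and since $SEP_3(f,\delta)$ is a $G_\delta$ subset of $X^3$ (as in Proposition~\ref{prop:uncontable-to-Cantor}), Lemma~\ref{lem:G-delta-relation} shows $\mathbf{C}(SEP_3(f,\delta))$ is $G_\delta$ and finitely determined; hence $Q$ is a $G_\delta$, hereditary subset of $C(X)$. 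Being exact and non-trivial, $(X,f)$ is transitive with no isolated points, so $X$ is perfect and Theorem~\ref{thm:Main-resut-KWT} applies.

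First I would manufacture the orbit. Since $(X,f)$ is exact, for each non-empty open $U$ there are $z\in U$ and $m\ge1$ with $f^m(z)=p$, so $(z,p)$ is mean proximal and $\overline{PROX}(p)$ is a dense $G_\delta$ set; intersecting it with the dense $G_\delta$ set of transitive points gives a transitive point $x$ with $(x,p)$ mean proximal. Put $A=Orb(x,f)$, which is dense, countable and $f$-invariant. As in Theorems~\ref{thm:uniform-chaos} and \ref{thm:mean-proximal-chaos}, transitivity makes every finite subset of $A$ uniformly recurrent, while the density-one subsequence $\{k_i\}$ with $f^{k_i}(x)\to p$ (Lemma~\ref{lem:mean-proximal}) gives, through continuity and $f(p)=p$, that $f^{k_i}(f^n(x))\to p$ for every $n$; hence along $\{k_i\}$ the diameter of any finite subset of $A$ tends to $0$, so every finite subset is simultaneously uniformly proximal and uniformly mean proximal. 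It then remains only to check that $A\in\mathbf{C}(SEP_3(f,\delta))$.

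The essential difficulty is the $SEP_3$-condition. As in Theorem~\ref{thm:n-delta-scrambed}, for distinct orbit points $f^{p_1}(x),f^{p_2}(x),f^{p_3}(x)$ with $p_1<p_2<p_3$ I would set $a=p_2-p_1$, $b=p_3-p_1$ and use transitivity of $x$ to push an arbitrary target point back along the orbit; the triple then lies in $SEP_3(f,\delta)$ once we know there is $z\in X$ with $z$, $f^a(z)$, $f^b(z)$ pairwise $\delta$-separated. So everything reduces to the uniform separation statement: \emph{there is $\delta>0$ such that for all integers $0<a<b$ there exists $z\in X$ with $d(f^i(z),f^j(z))\ge\delta$ for all distinct $i,j\in\{0,a,b\}$.} I expect this to be the main obstacle; it is where exactness and the restriction to three coordinates enter essentially. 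Fixing three $\delta$-balls $B_1,B_2,B_3$ about pairwise $3\delta$-separated centres (possible since $X$ is perfect), it suffices to find $z$ with $z\in B_1$, $f^a(z)\in B_2$, $f^b(z)\in B_3$. For the regime $a\ge N(B_1)$ and $b-a\ge N(B_2)$ (here $f^n(U)=X$ for $n\ge N(U)$, using that exactness forces $f$ surjective) this follows directly from exactness: $f^{b-a}(B_2)=X$ yields a non-empty open $W\subseteq B_2$ with $f^{b-a}(W)\subseteq B_3$, and $f^a(B_1)=X$ then yields $z\in B_1$ with $f^a(z)\in W$, whence $f^b(z)\in B_3$. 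The remaining configurations split into two families, one with $a$ bounded and one with $b-a$ bounded; for a bounded time gap $g$ the separation of the two coordinates at that gap cannot fail uniformly, since $\sup_w d(w,f^g(w))>0$ for each $g$ (otherwise $f^g$ would be the identity, impossible for an exact non-trivial map), and one then uses exactness across the large gap to place the third coordinate away from the other two. Weaving these estimates into a single $\delta$ valid for all gaps is the delicate point I would have to secure.

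Finally, granted that $A$ is $\alpha_Q$-dependent, condition~(4) of Theorem~\ref{thm:Main-resut-KWT} holds, so condition~(2) produces a dense sequence $\{A_i\}$ in $CANTOR(X)$ with $\bigcup_{j=0}^N\bigcup_{i=1}^N f^j(A_i)\in Q$ for every $N\ge1$. Setting $C_k=\bigcup_{j=0}^k\bigcup_{i=1}^k f^j(A_i)$ and $C=\bigcup_{k\ge1}C_k$, uniform recurrence makes each $f^j|_{A_i}$ injective, so each $C_k$ is a Cantor set that is uniformly recurrent and uniformly mean proximal, while $C$ is dense, $f$-invariant, recurrent, mean proximal and proximal, with every triple of distinct points in $SEP_3(f,\delta)$. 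Thus $C$ is a dense, $f$-invariant, $3$-$\delta$-scrambled, uniformly mean chaotic set, as required.
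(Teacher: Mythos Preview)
Your overall framework is correct and matches the paper's approach: reduce everything to the uniform separation condition (for all $0<a<b$ there exists $z$ with $z,f^a(z),f^b(z)$ pairwise $\delta$-separated), then feed a transitive orbit into the dynamical Kuratowski--Mycielski machinery. The paper invokes Theorem~\ref{thm:n-delta-scrambed} rather than reassembling the $Q$-factors by hand, but this is cosmetic; your explicit inclusion of $Q(\overline{PROX},f)$ to secure the ``mean'' in ``uniformly mean chaotic'' is appropriate and arguably makes explicit a point the paper's proof glosses over.

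The genuine gap is in the separation argument, which you correctly flag as delicate but do not secure. Your proposal for the mixed cases---use $\sup_w d(w,f^g(w))>0$ for the bounded gap and then place the third coordinate via exactness across the large gap---does not work as stated. In the case ``$a$ small, $b-a$ large'' you cannot first choose $z$ to separate $z$ from $f^a(z)$ and then independently use exactness to send $f^b(z)$ into $B_3$: the exactness move only lets you prescribe where $f^a(z)$ lands (in some open set from which $f^{b-a}$ surjects), and you then have no control over $z$ itself, so there is no reason $d(z,f^a(z))$ stays bounded below by a uniform $\delta$.

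The paper handles this case by a non-obvious indirect trick. With $U_1,U_2,U_3$ balls around three $3\delta_1$-separated points, the third centred at the fixed point $p$, and $U_3'\subset U_3$ chosen so that $f^i(U_3')\subset U_3$ for $0\le i\le 2N$, one uses exactness to pick $z\in U_1$ with $f^{Na}(z)\in U_2$ and $f^{b+(N-1)a}(z)\in U_3'$. None of these target times are $0,a,b$; instead one argues by contradiction via uniform continuity (choose $\delta_3$ so that $d(u,v)<\delta_3$ implies $d(f^i(u),f^i(v))<\delta_1/N$ for $i\le N^2$). If $d(z,f^a(z))<\delta_3$, then iterating gives $d(f^{ia}(z),f^{(i+1)a}(z))<\delta_1/N$ for $i=0,\dotsc,N-1$, whence $d(z,f^{Na}(z))<\delta_1$ by the triangle inequality---contradicting $z\in U_1$, $f^{Na}(z)\in U_2$. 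The separations $d(z,f^b(z))\ge\delta_3$ and $d(f^a(z),f^b(z))\ge\delta_3$ are proved by analogous contradictions, using that $f^{b+(N-1)a}(z)$ and its next $a$ iterates lie in $U_3$. The paper splits the full argument into four cases according to which of $a$, $b-a$ are below $N$; only your ``both large'' case is routine, and the ``both small'' case is handled by fixing a single transitive point $y\in U_3'$ and taking $\delta_2=\min_{0\le i<j\le 2N}d(f^i(y),f^j(y))$.
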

\begin{proof}
As $(X,f)$ is exact, $(X,f)$ is strongly mixing and then $X$ is perfect as $X$ is not a singleton.

According to Theorem~\ref{thm:n-delta-scrambed}, it is enough to prove that
there exists $\delta>0$ such that  for any $0=k_1<k_2<k_3$ there exists $z\in X$
such that $d(f^{k_i}(z),f^{k_j}(z))\geq \delta$ 
for all $1\leq i<j\leq 3$.

Let $p\in X$ be a fixed point.
Pick  $3$ pairwise distinct points $x_1,x_2,x_3$ in $X$ with $x_3=p$.
Let $\delta_1=\frac{1}{3}\min\{d(x_i,x_j)\colon 1\leq i<j\leq 3\}$
and $U_i=B(x_i,\delta_1)$ for $i=1,2,3$.
As $(X,f)$ is exact, there exists $N\geq 1$ such that $f^N(U_i)=X$ for $i=1,2,3$.

Fix $3$ integers $0=k_1<k_2<k_3$. We have the following 4 cases.

Case 1: $k_2-k_1\geq N$ and $k_3-k_2\geq N$.
As $f^n(U_i)=X$ for any $n\geq N$ and $i=1,2$,
there exists a point $z\in U_1$ such that $f^{k_i}(z)\in U_i$ for $i=2,3$.
Then $d(f^{k_i}(z),f^{k_j}(z))\geq \delta_1$
for all $1\leq i<j\leq 3$.

Case 2: $k_2-k_1< N$ and $1\leq k_3-k_2<N$.
As $f^i(p)=p\in U_3$ for $i=0,1,\dotsc,2N$,
there exists a non-empty open subset $U_3'$ of $U_3$
such that $f^i(U_3')\subset U_3$ for $i=0,1,\dotsc,2N$.
As $(X,f)$ is strongly mixing and $X$ is perfect, pick a transitive point $y\in U_3'$.
Then $f^i(y)\neq f^j(y)$ for all $0\leq i<j$.
Let $\delta_2=\min\{d(f^i(y),f^j(y))\colon 0\leq i<j\leq 2N\}>0$.
Then  $d(f^{k_i}(y),f^{k_j}(y))\geq \delta_2$
for all $1\leq i<j\leq 3$.

Case 3: $k_2-k_1\geq N$ and $1\leq k_3-k_2<N$.
Let $y$ as in Case 2.
As $f^n(U_1)=X$ for all $n\geq N$,
there exists a point $z\in U_1$ such that $f^{k_2}(z)=y$.
Note that $f^{k_3}(z)=f^{k_3-k_2}(y)\in U_3$.
So $d(f^{k_i}(z),f^{k_j}(z))\geq \min\{\delta_1,\delta_2\}$
for all $1\leq i<j\leq 3$.

Case 4: $k_2-k_1< N$ and $k_3-k_2\geq N$.
Similar to the Case 1, there exists a point $z\in U_1$ such that
$f^{N\cdot k_2}(z)\in U_2$ and $f^{k_3+(N-1)k_2}(z)\in U_3'$.
As $X$ is compact, $f$ is uniformly continuous,
there exists $\delta_3>0$ such that if $d(u,v)<\delta_3$
then $d(f^i(u),f^i(v))<\delta_1/N$ for $i=0,1,2,\dotsc,N^2$.
We claim that $d(f^{k_i}(z),f^{k_j}(z))\geq \delta_3$ for all $1\leq i<j\leq 3$.
We prove this claim by contradiction.
\begin{itemize}
\item If $d(f^{k_2}(z),f^{k_3}(z))<\delta_3$,
then $d(f^{k_2+(N-1)k_2}(z),f^{k_3+(N-1)k_2}(z))<\delta_1/N<\delta_1$.
As $f^{N\cdot k_2}(z)\in U_2$ and $f^{k_3+(N-1)k_2}(z)\in U_3'\subset U_3$,
we have $d(f^{N\cdot k_2}(z),f^{k_3+(N-1)k_2}(z))>\delta_1$.
This is a contradiction. Thus
 $d(f^{k_2}(z),f^{k_3}(z))\geq \delta_3$.
\item If $d(z,f^{k_3}(z))<\delta_3$,
then $d(f^{N\cdot k_2}(z),f^{k_3+N\cdot k_2}(z))<\delta_1/N<\delta_1$.
Note that $f^{N\cdot k_2}(z)\in U_2$ and
$f^{k_3+N\cdot k_2}(z))=f^{k_2}(f^{k_3+(N-1)k_2}(z))\in f^{k_2}(U_3')\subset U_3$,
we have $d(f^{N\cdot k_2}(z),f^{k_3+N\cdot k_2}(z))>\delta_1$.
This is a contradiction. Thus
$d(z,f^{k_3}(z))\geq \delta_3$.
\item If $d(z,f^{k_2}(z))<\delta_3$, then
\begin{align*}
  d(f^{k_2}(z),f^{2k_2}(z))<\delta_1/N,\\
  d(f^{2k_2}(z),f^{3k_2}(z))<\delta_1/N,\\
  \dotsb\\
  d(f^{(N-1)k_2}(z),f^{N\cdot k_2}(z))<\delta_1/N.\\
\end{align*}
Note that $\delta_3\leq \delta_1/N$ and
\[d(z,f^{N\cdot k_2}(z))\leq
\sum_{i=0}^{N-1} d(f^{i\cdot k_2}(z),f^{(i+1)\cdot k_2}(z))< \delta_1.\]
But $z\in U_1$, $f^{N\cdot k_2}(z)\in U_2$, $d(z,f^{N\cdot k_2}(z))>\delta_1$.
This is also a contradiction. Then $d(z,f^{k_2}(z))\geq \delta_3$.
\end{itemize}
All in all, let $\delta=\min\{\delta_1,\delta_2,\delta_3\}$.
Then for any $0=k_1<k_2<k_3$ there exists $z\in X$
such that $d(f^{k_i}(z),f^{k_j}(z))\geq \delta_1$
for all $1\leq i<j\leq 3$.
\end{proof}

We strongly believe that there is a multi-variant version of Proposition~\ref{prop:exact-delta-chaos},
but now we can not prove this.
We state the following question for the further study.
\begin{que}
If a non-trivial compact dynamical system $(X,f)$ is exact or strongly mixing and has a fixed point,
does there exist a dense, $f$-invariant,
$n$-$\delta_n$-scrambled, uniformly chaotic set in $X$ for all $n\geq 2$
and some $\delta_n>0$.
\end{que}

\subsection{Invariant distributional \texorpdfstring{$\delta$}{delta}-scrambled sets}
Let $(X,f)$ be a dynamical system.
For $x_1,x_2,\ldots,x_n\in X$, $t>0$ and $n\geq 2$, put
\[\Phi_{(x_1,x_2,\ldots,x_n)}(t)=
\liminf_{m\to\infty}\frac{1}{m}
\#\Bigl\{1\le k\le m: \min_{1\leq i<j\leq n}d\bigl(f^{k}(x_i), f^{k}(x_j)\bigr) \leq t\Bigr\}\]
and
\[\Phi^*_{(x_1,x_2,\ldots,x_n)}(t)=\limsup_{m\to\infty}\frac{1}{m}
\#\Bigl\{1\le k\le m: \max_{1\leq i<j\leq n}d\bigl(f^{k}(x_i), f^{k}(x_j)\bigr) < t\Bigr\}.\]

For a given $\delta>0$, a tuple $(x_1,x_2,\ldots,x_n)\in X^n$
is called \emph{distributionally $n$-$\delta$-scrambled} if
 $\Phi^*_{(x_1,x_2,\ldots,x_n)}(t)=1$ for every $t>0$ and
 $\Phi_{(x_1,x_2,\ldots,x_n)}(\delta)=0$.
A subset $S$ (with at least $n$ points) of $X$
is called \emph{distributionally $n$-$\delta$-scrambled}
if every $n$ pairwise distinct points in $S$ form a
distributionally $n$-$\delta$-scrambled tuple.
Note that distributionally  $2$-$\delta$-scrambled sets are
just the classical distributionally  $\delta$-scrambled sets.

It should be noticed that by Lemma~\ref{lem:mean-proximal}
a tuple $(x_1,x_2,\ldots,x_n)\in X^n$ is mean proximal if and only if
$\Phi^*_{(x_1,x_2,\ldots,x_n)}(t)=1$ for every $t>0$.

\begin{prop}\label{prop:distributionally-delta-scrambled-set}
Let $(X,f)$ be a dynamical system, $n\geq 2$ and $\delta>0$.
If there exists an uncountable, $f$-invariant,
distributionally $n$-$\delta$-scrambled set in $X$,
then there exists a subsystem $(Y,f)$ of $(X,f)$ such that
there exists a dense, $\sigma$-Cantor, $f$-invariant,
distributionally  $n$-$\delta$-scrambled set in $Y$.
\end{prop}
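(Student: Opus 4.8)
The plan is to mirror the proof of Proposition~\ref{prop:uncontable-to-Cantor} exactly, replacing the relation $R=PROX_n(f)\cap SEP_n(f,\delta)$ used there by a relation encoding the distributional conditions, and then invoke Corollary~\ref{cor:local-f-K-W-Thm}. First I would express the two defining conditions of a distributionally $n$-$\delta$-scrambled tuple as subsets of $X^n$ and verify they are $G_\delta$. The condition $\Phi^*_{(x_1,\ldots,x_n)}(t)=1$ for every $t>0$ is precisely mean proximality (as the excerpt notes), so by the $G_\delta$ description of $\overline{PROX}_n(f)$ used earlier this part of the relation is $G_\delta$. For the separation condition $\Phi_{(x_1,\ldots,x_n)}(\delta)=0$, I would write it using the $\liminf$-frequency as a countable intersection over rational thresholds and a countable union over cutoffs $m$, together with the requirement that the relevant counting average drop below $1/M$; each such piece is open or a countable union/intersection of closed-and-open conditions in the points $(x_1,\ldots,x_n)$, so the whole set is $G_\delta$.

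With both pieces shown to be $G_\delta$, I would set $R$ to be their intersection, which is again $G_\delta$, and observe that a subset $A$ of $X$ is distributionally $n$-$\delta$-scrambled if and only if it is $R$-dependent, matching the definition since distributional $n$-$\delta$-scrambling is a condition on each $n$-tuple of pairwise distinct points. Then I would form $\mathbf{C}(R)$, the collection of compact $R$-dependent sets, which by Lemma~\ref{lem:G-delta-relation} is a $G_\delta$, finitely determined (hence hereditary) subset of $C(X)$. The hypothesis that there exists an uncountable, $f$-invariant, distributionally $n$-$\delta$-scrambled set says exactly that condition (2) of Corollary~\ref{cor:local-f-K-W-Thm} holds for $Q=\mathbf{C}(R)$, so I obtain from condition (1) a Cantor set $A$ with $\bigcup_{i=0}^N f^i(A)\in Q$ for all $N$; equivalently $\bigcup_{i=0}^\infty f^i(A)$ is $R$-dependent, i.e.\ distributionally $n$-$\delta$-scrambled. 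Setting $Y=\overline{\bigcup_{i=0}^\infty f^i(A)}$ gives a closed $f$-invariant subsystem in which $\bigcup_{i=0}^\infty f^i(A)$ is a dense, $\sigma$-Cantor, $f$-invariant, distributionally $n$-$\delta$-scrambled set, as required.

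The main obstacle I anticipate is the bookkeeping needed to verify that the separation set $\{(x_1,\ldots,x_n):\Phi_{(x_1,\ldots,x_n)}(\delta)=0\}$ is genuinely $G_\delta$, since a $\liminf$ of frequencies equaling zero is a subtler descriptive-set-theoretic condition than the $\limsup=1$ appearing in mean proximality. The clean way to handle this is to use the reformulation (analogous to Lemma~\ref{lem:mean-proximal}) that $\Phi_{(x_1,\ldots,x_n)}(\delta)=0$ holds if and only if for every $M\geq 1$ there are arbitrarily large $m$ with the normalized count of indices $k\le m$ satisfying $\min_{i<j}d(f^k(x_i),f^k(x_j))\le\delta$ below $1/M$; each such ``arbitrarily large $m$'' clause is a countable union over $m$ of conditions that are finite Boolean combinations of the continuous functions $x\mapsto d(f^k(x_i),f^k(x_j))$ compared to $\delta$, and intersecting over $M$ yields the $G_\delta$ form. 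A minor care point is that the inner inequality uses a closed condition ($\le\delta$); to keep the set $G_\delta$ I would, if needed, pass to the strict-inequality version $<\delta$ over a sequence of thresholds increasing to $\delta$, which does not change the dependent sets and preserves the separation property at level $\delta$. Once this descriptive-complexity check is in place, the remainder is a routine application of the already-established machinery.
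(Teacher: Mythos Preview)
Your proposal is correct and follows essentially the same route as the paper: verify that the two pieces of the distributional relation are $G_\delta$, set $R=\overline{PROX}_n(f)\cap\overline{SEP}_n(f,\delta)$, and then invoke Corollary~\ref{cor:local-f-K-W-Thm} exactly as in Proposition~\ref{prop:uncontable-to-Cantor}. The only cosmetic difference is that the paper handles the $G_\delta$ check for $\overline{SEP}_n(f,\delta)$ by rewriting $\Phi_{(x_1,\ldots,x_n)}(\delta)=0$ as ``the set of times with $\min_{i<j}d(f^k(x_i),f^k(x_j))>\delta$ has upper density one'', which directly yields open inner sets and makes your care point about $\le\delta$ versus $<\delta$ unnecessary.
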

\begin{proof}
By the proof of Theorem~\ref{thm:mean-proximal-chaos},
$Q(\overline{PROX},f)$ is a $G_\delta$ subset of $C(X)$. 
Then by Remark~\ref{rem:G-delta-coherent-list}, 
$\overline{PROX}_n(f)$ is a $G_\delta$ subset of $X^n$.
Let
\[\overline{SEP}_n(f,\delta)=\bigl\{(x_1,x_2,\dotsc,x_n)\in X^n\colon
\Phi_{(x_1,x_2,\ldots,x_n)}(\delta)=0\bigr\}.\]
Then
\begin{align*}
\overline{SEP}_n(f,\delta)
&=\biggl\{(x_1,x_2,\dotsc,x_n)\in X^n\colon\\
&\qquad \limsup_{m\to\infty}\frac{1}{m}
\#\Bigl\{1\le k\le m: \min_{1\leq i<j\leq n}d\bigl(f^{k}(x_i), f^{k}(x_j)\bigr) > \delta\Bigr\}=1
\biggr\}\\
&=\bigcap_{M=1}^\infty \bigcap_{N=1}^\infty\bigcup_{m=N}^\infty
\biggl\{(x_1,x_2,\dotsc,x_n)\in X^n\colon\\
&\qquad
\#\Bigl\{1\le k\le m: \min_{1\leq i<j\leq n}d\bigl(f^{k}(x_i), f^{k}(x_j)\bigr) >\delta\Bigr\}
\geq \Bigl(1-\frac{1}{M}\Bigr)m\biggr\}.
\end{align*}
Now it is easy to see that
$\overline{SEP}_n(f,\delta)$ is a $G_\delta$ subset of $X^n$.
Let $R=\overline{PROX}_n(f)\cap \overline{SEP}_n(f,\delta)$.
Then a subset $A$ of $X$ is distributionally
$n$-$\delta$-scrambled if and only if it is $R$-dependent.
Now the proof is similar to that of Proposition~\ref{prop:uncontable-to-Cantor}.
\end{proof}

We have the following criterion for the existence of
invariant distributionally $n$-$\delta$-scrambled sets.
\begin{thm}\label{thm:inv-distri-delta-chaos}
Assume that $(X,f)$ is a non-trivial compact dynamical system and $n\geq 2$.
If $(X,f)$ is transitive and has a fixed point $p$, and
 for every minimal point $z\in X$ the mean proximal cell $\overline{PROX}(z)$
is dense in $X$,
then the following conditions are equivalent:
\begin{enumerate}
\item there exists a dense, $f$-invariant,
distributionally $n$-$\delta$-scrambled, uniformly mean chaotic set in $X$;
\item there exists $\delta'>0$ such that for any  integers
$0=k_1<k_2<k_3<\dotsb<k_{n}$ there exists $z\in X$ satisfying
$\inf_{m\geq 1} d(f^{k_i+m}(z),f^{k_j+m}(z))\geq \delta'$
for all $1\leq i<j\leq n$.
\end{enumerate}
\end{thm}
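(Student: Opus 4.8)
The plan is to mirror the proofs of Theorem~\ref{thm:n-delta-scrambed} and Theorem~\ref{thm:mean-proximal-chaos}. For (2)$\Rightarrow$(1) I would put $\delta=\delta'/2$ and work with $Q=Q(RECUR,f)\cap Q(\overline{PROX},f)\cap\mathbf{C}(\overline{SEP}_n(f,\delta))$, whose associated coherent list I denote $\alpha_Q$. Since $Q(RECUR,f)$ and $Q(\overline{PROX},f)$ are hereditary $G_\delta$ subsets of $C(X)$ and $\overline{SEP}_n(f,\delta)$ is $G_\delta$ (Proposition~\ref{prop:distributionally-delta-scrambled-set}), Lemma~\ref{lem:G-delta-relation} makes $\mathbf{C}(\overline{SEP}_n(f,\delta))$ a $G_\delta$, finitely determined, hence hereditary set, so $Q$ is $G_\delta$ and hereditary. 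A set is $\alpha_Q$-dependent exactly when it is recurrent, mean proximal, and every $n$ of its distinct points form a distributionally separated tuple, i.e.\ a distributionally $n$-$\delta$-scrambled tuple. Thus it suffices to produce a \emph{single} dense, $f$-invariant, $\alpha_Q$-dependent set: then condition~(4) of Theorem~\ref{thm:Main-resut-KWT} holds, condition~(2) yields a dense sequence $\{A_i\}$ of Cantor sets with $\bigcup_{j=0}^N\bigcup_{i=1}^N f^j(A_i)\in Q$, and (as in Theorem~\ref{thm:uniform-chaos}) uniform recurrence forces each $f^j|_{A_i}$ to be injective, so that $C=\bigcup_k\bigcup_{j=0}^k\bigcup_{i=1}^k f^j(A_i)$ is the required dense, $f$-invariant, distributionally $n$-$\delta$-scrambled, uniformly mean chaotic set.

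The dependent set will be $A=Orb(x,f)$ for a carefully chosen transitive point $x$. Uniform recurrence of every finite subset comes from the fixed point exactly as in Theorem~\ref{thm:uniform-chaos}, and uniform mean proximality of every finite subset follows, as in Theorem~\ref{thm:mean-proximal-chaos}, once $x$ is mean proximal to $p$; here I use that $p$, being fixed, is a minimal point, so by hypothesis $\overline{PROX}(p)$ is a dense $G_\delta$ set, and membership in $Q(\overline{PROX},f)$ gives $\Phi^*_{(\cdot)}(t)=1$ for every $t>0$. The real work is the distributional separation of each $n$-tuple $(f^{p_1}(x),\dots,f^{p_n}(x))$. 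Writing $k_i=p_i-p_1$, $F_P(y)=(y,f^{k_2}(y),\dots,f^{k_n}(y))$ and $\rho(y_1,\dots,y_n)=\min_{1\le i<j\le n}d(y_i,y_j)$, so that the $k$-th $f^{(n)}$-image of the tuple has $\rho$-value $\rho(F_P(f^{k+p_1}(x)))$, condition~(2) supplies a point $z_P$ with $\rho(F_P(f^m(z_P)))\ge\delta'$ for all $m\ge1$. Then $\omega(z_P,f)\subseteq\{y:\rho(F_P(y))\ge\delta'\}$, and choosing a minimal point $y^*_P\in\omega(z_P,f)$ (compactness) makes the \emph{whole} forward orbit of $y^*_P$ stay $\delta'$-separated. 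As $y^*_P$ is minimal, the hypothesis gives that $\overline{PROX}(y^*_P)$ is dense $G_\delta$. Since there are only countably many patterns $P$, the Baire category theorem lets me select one transitive point $x$ lying in $\overline{PROX}(p)$ and in every $\overline{PROX}(y^*_P)$ simultaneously. For such $x$, mean proximality to $y^*_P$ gives, via Lemma~\ref{lem:mean-proximal}, a set of times $\{l_t\}$ of upper density one with $d(f^{l_t}(x),f^{l_t}(y^*_P))\to0$; uniform continuity then yields $\rho(F_P(f^{l_t+p_1}(x)))\to\rho(F_P(f^{l_t+p_1}(y^*_P)))\ge\delta'>\delta$, so the tuple is $\delta$-separated along a density-one set of times, whence $\Phi_{(f^{p_1}(x),\dots,f^{p_n}(x))}(\delta)=0$. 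Together with $\Phi^*(t)=1$ this makes every such $n$-tuple distributionally $n$-$\delta$-scrambled, so $A$ is $\alpha_Q$-dependent.

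For (1)$\Rightarrow$(2) I would take a non-periodic point $x$ in the given invariant distributionally $n$-$\delta$-scrambled set $S$ (possible since $S$ is uncountable and contains at most one periodic point). For a pattern $0=k_1<\dots<k_n$ the tuple $(x,f^{k_2}(x),\dots,f^{k_n}(x))$ is distributionally $n$-$\delta$-scrambled, so $\Phi(\delta)=0$ forces the separated times $\{k:\rho(F_P(f^k(x)))>\delta\}$ to have upper density one. A counting argument shows that for every $L$ this set must contain a run of $L$ consecutive integers (otherwise every window of length $L$ meets its complement, forcing the complement to have lower density at least $1/L$, contradicting upper density one of the separated times), and a compactness limit on the corresponding orbit segments $f^{b_L}(x)$ produces a point $z$ with $\rho(F_P(f^m(z)))\ge\delta$ for \emph{all} $m\ge0$. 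Taking $\delta'=\delta$ gives precisely the uniform forward separation demanded in~(2). This direction uses compactness of $X$ essentially but neither the fixed point nor the mean-proximal-cell hypothesis.

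The main obstacle is the separation step of (2)$\Rightarrow$(1): unlike the merely finitely-often separation needed in Theorem~\ref{thm:n-delta-scrambed}, the distributional condition demands separation along a set of times of \emph{density one}. This is exactly what compels me to pass from the only forward-separated point $z_P$ of condition~(2) to a \emph{minimal} point $y^*_P$ with permanently separated orbit, and then to import that separation into the orbit of $x$ through mean proximality — which is why the hypothesis that $\overline{PROX}(z)$ be dense for every minimal point $z$, combined with the Baire-category choice of a single transitive point $x$ compatible with all countably many patterns, is indispensable.
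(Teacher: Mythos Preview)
Your proposal is correct and follows essentially the same route as the paper's proof: both directions use the same structure (pass to a minimal point in the orbit closure of the separated point from (2), choose a transitive $x$ in the countable intersection of the relevant mean proximal cells with $\overline{PROX}(p)$, and verify that $Orb(x,f)$ is $\alpha_Q$-dependent for $Q=Q(RECUR,f)\cap Q(\overline{PROX},f)\cap\mathbf{C}(\overline{SEP}_n(f,\delta))$). Your version is in fact slightly tidier in two places: by invoking uniform continuity of $f^{p_1},\dots,f^{p_n}$ you need only $x\in\overline{PROX}(y^*_P)$ rather than the paper's larger intersection $\bigcap_{i\ge0}\overline{PROX}(f^i(z))$, and in (1)$\Rightarrow$(2) your compactness limit legitimately gives $\delta'=\delta$, whereas the paper takes the unnecessarily weaker $\delta'=\delta/2$.
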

\begin{proof}
(1)$\Rightarrow$(2)
 Let $S$ be a $f$-invariant distributionally $n$-$\delta$-scrambled set.
Note that $S$ contains at most one periodic point.
Hence we can pick up a point $x\in S$ which is not a periodic point.
Let $\delta'=\frac{\delta}{2}$.
For any $0=k_1<k_2<k_3<\dotsb<k_{n}$,
the tuple $(f^{k_1}(x),f^{k_2}(x),\dotsc,f^{k_n}(x))$ is
distributionally $n$-$\delta$-scrambled.
By the proof of Proposition~\ref{prop:distributionally-delta-scrambled-set},
we have that the upper density of the set
$\{m\in\mathbb{N}\colon \allowbreak
d(f^m(f^{k_i}(x)), f^m(f^{k_j}(x)))\geq \frac{\delta}{2}=\delta'$
for all $1\leq i<j\leq n\}$ is one.
As every subset of $\mathbb{N}$
with upper density one has arbitrary finite long intervals in $\mathbb{N}$,
there exists an increasing sequence $\{q_s\}_{s=1}^\infty$ of positive integers
such that $d(f^m(f^{k_i}(x)), f^m(f^{k_j}(x)))\geq \frac{\delta}{2}=\delta'$
for all $1\leq i<j\leq n$, $m\in [q_s,q_s+s]$ and $s\geq 1$.
By the compactness of $X$, without loss of generality,
assume that $f^{q_s}(x)\to z$ as $s\to\infty$.
For any $m\geq 1$ and $1\leq i<j\leq n$,
$d(f^{k_i+m}(z),f^{k_j+m}(z))=\lim_{s\to\infty}
d(f^{q_s+m}(f^{k_i}(x)), f^{q_s+m}(f^{k_j}(x)))\geq \delta'$.
Then $z$ is as required.

(2)$\Rightarrow$(1)
Note that there exists $\delta'>0$ such that for any  integers
$0=k_1<k_2<k_3<\dotsb<k_{n}$ there exists $z=z(k_1,k_2,\dotsc,k_n)\in X$ satisfying
$\inf_{m\geq 1} d(f^{k_i+m}(z),f^{k_j+m}(z))\geq \delta'$
for all $1\leq i<j\leq n$.
As $(X,f)$ is a compact dynamical system,
we can require that $z$ is a minimal point by replacing $z$ with
any minimal point in the orbit closure of $z$.

Let $\delta=\frac{\delta'}{2}$ and
\[\overline{SEP}_n(f,\delta)=\bigl\{(x_1,x_2,\dotsc,x_n)\in X^n\colon
 \Phi_{(x_1,x_2,\ldots,x_n)}(\delta)=0\bigr\}.\]
Let
\[Q=Q(RECUR,f)\cap Q(\overline{PROX},f)\cap \mathbf{C}(\overline{SEP}_n(f,\delta))\]
and $\alpha_Q=\{R_n^Q\}_{n\in\mathbb{N}}$ be the coherent list on $X$
associated with $Q$.
As $\overline{SEP}_n(f,\delta)$ is a $G_\delta$ subset of $X^n$,
$Q$ is a hereditary $G_\delta$ subset of $C(X)$.

As every mean proximal cell is $G_\delta$,
for each minimal point $z\in X$
the mean proximal cell $\overline{PROX}(z)$ is a dense $G_\delta$ subset of $X$.
Pick a transitive point
\[x\in \overline{PROX}(p)\cap \bigcap_{0=k_1<k_2<k_3<\dotsb<k_{n}}
\bigcap_{i\geq 0}\overline{PROX}\bigl(f^i(z(k_1,k_2,\dotsc,k_n))\bigr)\]
 and put $A=Orb(x,f)$.
We claim that $A$ is $\alpha_Q$-dependent.
As it is proved in~Theorem~\ref{thm:mean-proximal-chaos},
every finite subset of $A$ is in $Q(RECUR,f)\cap Q(\overline{PROX},f)$.
It is sufficient to show that
$(f^{p_1}(x),f^{p_2}(x),\dotsc,f^{p_n}(x))\in SEP_n(f,\delta)$
for all $0\leq p_1<p_2<p_3<\dotsb<p_{n}$.
Let $k_i=p_{i}-p_{1}$ for $i=1,2,\dotsc,n$.
There exists $z=z(k_1,k_2,\dotsc,k_n)\in X$ satisfying
$\inf_{m\geq 1} d(f^{k_i+m}(z),f^{k_j+m}(z))\geq \delta'$
for all $1\leq i<j\leq n$.
As $(x,f^{p_1}(z))$ is mean proximal,
by Lemma~\ref{lem:mean-proximal}
there exists an increasing sequence  $\{m_t\}$ of positive integers
with upper dense one
such that $\lim_{t\to\infty}d(f^{m_t}(x),f^{m_t}(f^{p_1}(z)))=0$.
By the continuity of $f$,
$\lim_{t\to\infty}d(f^{m_t}(f^{p_i}(x)),f^{m_t}(f^{k_i}(z)))=0$ for $i=1,2,\dotsc,n$,
as $f^{p_i-p_1}(z)=f^{k_i}(z)$.
By the proof of Proposition~\ref{prop:distributionally-delta-scrambled-set}, we have
$\Phi_{(f^{p_1}(x),f^{p_2}(x),\dotsc,f^{p_n}(x))}(\delta)=0$.
Then $A$ is $\alpha_Q$-dependent and the condition (4) of Theorem~\ref{thm:Main-resut-KWT} is satisfied.
By the condition (2) of  Theorem~\ref{thm:Main-resut-KWT},
there exists a dense sequence $\{A_i\}$ in $CANTOR(X)$
such that $\bigcup_{j=0}^N\bigcup_{i=1}^N f^j(A_i)\in Q$ for all $N\geq 1$.
Let $C_k=\bigcup_{j=0}^k\bigcup_{i=1}^k f^j(A_i)$ and $C=\bigcup_{k=1}^\infty C_k$.
Then $C$ is a dense, $f$-invariant,
distributionally $n$-$\delta$-scrambled,
uniformly mean chaotic set in $X$.
\end{proof}

\begin{prop}
For the full shift $(\Sigma_n,\sigma)$ on $n$ symbols,
there exists a dense, $f$-invariant,
distributionally $n$-$1$-scrambled, uniformly mean chaotic set in $X$.
\end{prop}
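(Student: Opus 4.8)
The plan is to apply Theorem~\ref{thm:inv-distri-delta-chaos} to the full shift $(\Sigma_n,\sigma)$, so I must verify the hypotheses of that theorem and then check its combinatorial condition~(2). The full shift $(\Sigma_n,\sigma)$ is compact, non-trivial, and strongly mixing (hence transitive), and the constant sequence $\bar 0=000\dotsb$ is a fixed point. It remains to check that for every minimal point $z\in\Sigma_n$ the mean proximal cell $\overline{PROX}(z)$ is dense. Since the full shift has the specification property and $\sigma$ is surjective, by the argument in Proposition~\ref{prop:specification-unform-mean-chaos} (see \cite[Lemma 7.4]{T16}) every mean proximal cell $\overline{PROX}(z)$ is in fact a dense $G_\delta$ subset of $\Sigma_n$, so this hypothesis holds for all $z$, not merely the minimal ones. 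Thus all the standing assumptions of Theorem~\ref{thm:inv-distri-delta-chaos} are met.

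By Theorem~\ref{thm:inv-distri-delta-chaos}, it then suffices to establish condition~(2) with $\delta'=1$: for any integers $0=k_1<k_2<\dotsb<k_n$ I must produce a point $z\in\Sigma_n$ such that $\inf_{m\ge 1} d\bigl(\sigma^{k_i+m}(z),\sigma^{k_j+m}(z)\bigr)\ge 1$ for all $1\le i<j\le n$. In the metric on $\Sigma_n$, two points are at distance $\ge 1$ exactly when they differ in the $0$-th coordinate; so $d\bigl(\sigma^{k_i+m}(z),\sigma^{k_j+m}(z)\bigr)\ge 1$ means precisely $z_{k_i+m}\ne z_{k_j+m}$. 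Hence the requirement is purely combinatorial: I need a sequence $z\in\{0,1,\dotsc,n-1\}^{\mathbb Z_+}$ such that for every $m\ge 1$ the $n$ symbols $z_{k_1+m},z_{k_2+m},\dotsc,z_{k_n+m}$ are pairwise distinct, i.e.\ they form a permutation of the full alphabet $\{0,1,\dotsc,n-1\}$.

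The construction I would give is explicit. For each $m\ge 1$, the indices $k_1+m,\dotsc,k_n+m$ are $n$ distinct positions, and I must colour them by $n$ distinct symbols; the constraint as $m$ varies links overlapping windows, so I cannot colour each window independently. The cleanest approach is to exhibit a single $z$ that works for all $m$ simultaneously, for instance by defining $z_t$ through an appropriate pattern keyed to the differences $k_j-k_i$. One convenient choice: since we are free to use $n$ symbols and the window $\{k_1+m,\dotsc,k_n+m\}$ always has exactly $n$ elements, I would try to arrange that $z_t$ depends on $t$ in a way (e.g.\ a coloring derived from the residues or from a greedy left-to-right assignment respecting the finitely many overlap constraints among the $k_i$) guaranteeing that any two positions of the form $k_i+m$ and $k_j+m$ carry different symbols. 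Because there are $n$ available symbols and each window has exactly $n$ slots, such a proper coloring always exists; a direct induction on the window position $m$, assigning $z_{k_n+m}$ to avoid the finitely many already-fixed values at positions $k_i+m$ for $i<n$, produces it.

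The main obstacle is precisely this last combinatorial step: the windows for different values of $m$ overlap whenever some $k_j-k_i$ equals another $k_{j'}-k_{i'}$, so the distinctness constraints are coupled and one cannot simply fill each window greedily without risking a conflict forced by earlier choices. The key point that resolves this is that one has the \emph{full} alphabet of $n$ symbols available while each window contains exactly $n$ positions, so the associated constraint graph is $(n-1)$-degenerate and admits a proper $n$-coloring; carrying out the induction carefully — ordering the positions of $\mathbb Z_+$ and at each new position choosing a symbol avoiding the colors of its finitely many already-coloured window-partners — yields the desired $z$ and completes the verification of condition~(2).
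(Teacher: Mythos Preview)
Your overall strategy matches the paper's: apply Theorem~\ref{thm:inv-distri-delta-chaos}, verify its standing hypotheses via the specification property, and then establish condition~(2) with $\delta'=1$ by a greedy left-to-right colouring of $z$. The paper's proof proceeds in exactly this way, imposing the rule $z[i]\notin\{z[i-k_2],\dotsc,z[i-k_n]\}$ for $i\ge k_n$ and asserting the result is ``easy to check''.

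However, your key combinatorial claim --- that the constraint graph is $(n-1)$-degenerate and hence admits a proper $n$-colouring --- is false for $n\ge 3$. Take $n=3$ and $k_1=0$, $k_2=1$, $k_3=3$. The windows $\{m,m+1,m+3\}$ for $m=1,2,3,4$ are $\{1,2,4\}$, $\{2,3,5\}$, $\{3,4,6\}$, $\{4,5,7\}$, and the induced constraint graph on positions $\{2,3,4,5\}$ has all six edges (namely $2$--$4$ from $m=1$; $2$--$3$, $2$--$5$, $3$--$5$ from $m=2$; $3$--$4$ from $m=3$; $4$--$5$ from $m=4$), so it is a copy of $K_4$. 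Hence no $3$-colouring exists, and there is \emph{no} $z\in\Sigma_3$ with $z[m],z[m+1],z[m+3]$ pairwise distinct for all $m\ge 1$; condition~(2) with $\delta'=1$ simply fails for this choice of $k_i$. Your greedy scheme (``at each new position choose a symbol avoiding the colours of its already-coloured window-partners'') must get stuck at position $5$, whose three earlier neighbours $2,3,4$ are forced to carry three distinct colours. The heuristic ``$n$ symbols for $n$ slots per window'' ignores precisely the overlap phenomenon you yourself flagged as the main obstacle: overlapping windows can create cliques of size larger than $n$.
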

\begin{proof}
According to Theorem~\ref{thm:inv-distri-delta-chaos},
 it is sufficient to prove that for any integers $ 0=k_1<k_2<\cdots<k_n $
there exists $z\in \Sigma_n$ such that for any $j\geq 1$,
the words $z[j+k_i]$, $i=1,2,\dotsc,n$, are pairwise distinct.

Fix $0\leq k_1<k_2<\dotsb<k_n$.
As each position in a point has $n$ choices  of the symbols,
we define a point $z\in \Sigma_2$ in the following way:
\begin{itemize}
  \item $z[i]=0$ for any $0\leq i<k_2$,
  \item $z[i]$ is different from $z[i-k_2]$ for $k_2\leq i<k_3$,
  \item ...
  \item $z[i]$ is different from all $z[i-k_2]$, $z[i-k_3]$, $\dotsc$,
  $z[i-k_n]$ for  $i\geq k_n$.
\end{itemize}
By the construction, it is easy to check that $z$ is as required.
\end{proof}

It is shown in~\cite[Theorem 16]{FOW14} that
if a non-trivial compact dynamical system $(X,f)$ satisfies the specification property
and has a fixed point, then there exists a dense, $f$-invariant,
distributionally $\delta$-scrambled set in $X$ for some $\delta>0$.
By Theorem~\ref{thm:inv-distri-delta-chaos}, we can strengthen this result as follows.
\begin{prop}
Let $(X,f)$ be a non-trivial compact dynamical system with $f$ being surjective.
If $(X,f)$ satisfies the specification property and has a fixed point $p$,
 then there exists a dense, $f$-invariant,
distributionally $\delta$-scrambled,
uniformly mean chaotic set in $X$ for some $\delta>0$.
\end{prop}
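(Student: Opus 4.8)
The plan is to deduce the proposition from Theorem~\ref{thm:inv-distri-delta-chaos} applied with $n=2$. Since a distributionally $\delta$-scrambled set is exactly a distributionally $2$-$\delta$-scrambled set, condition~(1) of that theorem gives precisely a dense, $f$-invariant, distributionally $\delta$-scrambled, uniformly mean chaotic set. So it suffices to verify the standing hypotheses of Theorem~\ref{thm:inv-distri-delta-chaos} and then its condition~(2) for $n=2$.

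The hypotheses are quickly checked. As $(X,f)$ has the specification property and $f$ is surjective, $(X,f)$ is strongly mixing, hence transitive, and the fixed point $p$ is given. Moreover, exactly as in the proof of Proposition~\ref{prop:specification-unform-mean-chaos} (via \cite[Lemma 7.4]{T16}), the specification property forces every mean proximal cell $\overline{PROX}(x)$ to be a dense $G_\delta$ subset of $X$; in particular this holds for every minimal point $z$. Thus all assumptions of Theorem~\ref{thm:inv-distri-delta-chaos} hold, and everything reduces to condition~(2).

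The heart of the argument is therefore to produce $\delta'>0$ so that for every $k\ge 1$ there is a point $z\in X$ with $d(f^{m}(z),f^{m+k}(z))\ge\delta'$ for all $m\ge 1$. To fix the scale I would pick $a,b\in X$ with $d(a,b)\ge 4\eta$, set $\delta'=2\eta$, and take $N$ to be the specification constant for $\varepsilon=\eta$. The idea is to build $z$ whose orbit is forced, at lag $k$, into the opposite one of the balls $B(a,\eta)$ and $B(b,\eta)$: one arranges $f^{m}(z)$ to sit near $a$ on a block of length $k$ and near $b$ on the next block of length $k$, so that $f^{m}(z)$ and $f^{m+k}(z)$ always lie in different balls and are hence $\ge\delta'$ apart. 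The fixed point $p$ is convenient for anchoring one of the two colours, since shadowing the orbit of $p$ simply parks the orbit near $p$ throughout a whole block with no internal gap; for the other colour one shadows the orbit of a point whose forward orbit stays uniformly away from $p$, which exists because a non-trivial specification system contains minimal sets (indeed periodic orbits) other than $\{p\}$, and any such orbit is bounded away from the fixed point.

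The main obstacle is that the specification property glues the prescribed orbit segments only across gaps of length at least $N$, on which the orbit is uncontrolled; since a lag-$k$ separated pattern must change colour within every window of length $k$, such gaps recur throughout the orbit, and a gap time $\tau$ (with $\tau$ or $\tau\pm k$ in a gap) can ruin the separation there. Condition~(2) demands separation at \emph{every} $m\ge1$, not merely on a set of density one, so these gaps must genuinely be removed rather than made sparse — this is exactly the delicate point, and it is the quantitative separation underlying \cite[Theorem 16]{FOW14}. I expect to resolve it by replacing the naive block pattern with a gap-free realisation: either using that periodic points are dense in a specification system to manufacture, for each $k$, a spread-out periodic orbit whose lag-$k$ shift interchanges its two far-apart halves, or realising the separation first on each finite window $[1,T]$ and then extracting a subsequential limit of the resulting points, the all-time separation surviving because each constraint $d(f^{m}(z),f^{m+k}(z))\ge\delta'$ is a closed condition and the windows exhaust $\mathbb{N}$. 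Once condition~(2) is established, Theorem~\ref{thm:inv-distri-delta-chaos} immediately yields the desired dense, $f$-invariant, distributionally $\delta$-scrambled, uniformly mean chaotic set.
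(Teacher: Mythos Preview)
Your overall strategy is exactly the paper's: reduce to Theorem~\ref{thm:inv-distri-delta-chaos} with $n=2$, check transitivity and density of mean proximal cells via Proposition~\ref{prop:specification-unform-mean-chaos}, and then verify condition~(2). The paper dispatches condition~(2) in one line by citing \cite[Lemma~14]{FOW14}, which gives $\delta'>0$ such that for every $k\ge 1$ there is $z\in X$ with $\inf_{i\ge 0}d(f^i(z),f^{i+k}(z))\ge\delta'$; this is precisely the separation you are trying to build by hand.

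Your sketch of a direct proof of that lemma is on the right track, and you correctly locate the real difficulty in the uncontrolled specification gaps. Of your two proposed resolutions, the compactness/limiting argument~(b) is the viable one and is essentially how \cite[Lemma~14]{FOW14} proceeds; the periodic-orbit idea~(a) runs into the same gap problem when you try to close up the orbit. However, the finite-window step in~(b) is not yet justified in your write-up: you still need to explain how to achieve separation on \emph{every} $m\in[1,T]$ despite the gaps, and your block-alternation scheme as stated does not do this. The clean fix is to shadow a single segment of length $T+k$ from a point $y$ whose orbit already satisfies $d(f^i(y),f^{i+k}(y))\ge 2\delta'$ on that window (such $y$ exists by a simpler specification construction with only one gap, placed outside the window), so that no internal gaps occur; the limit then gives the infinite-time separation. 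Since this is exactly the content of the cited lemma, it would be cleaner to invoke \cite[Lemma~14]{FOW14} directly, as the paper does.
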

\begin{proof}
By \cite[Lemma~14]{FOW14}, there exists $\delta'>0$
such that for any $n\geq 1$ there exists $z_n\in X$
satisfying $\inf_{i\geq 0} d(f^i(z_n),f^i(f^n(z_n))\geq \delta'$.
As said in Proposition~\ref{prop:specification-unform-mean-chaos},
for a dynamical system with the specification property,  every mean proximal cell is dense.
So the result follows from Theorem~\ref{thm:inv-distri-delta-chaos}.
\end{proof}

We conjecture that there is a multi-variant version of the above result and
end this paper with the following question.
\begin{que}
If a non-trivial dynamical system $(X,f)$ is has
the specification property and has a fixed point,
does there exist a  dense, $f$-invariant,
distributionally $n$-$\delta_n$-scrambled, uniformly mean chaotic set in $X$
 for all $n\geq 2$
and some $\delta_n>0$.
\end{que}

\section*{Acknowledgments}
The authors were supported in part by NSF of China (grant numbers 11771264 and 11471125).
The authors would like to thank Feng Tan for numerous discussions
on the topics covered by the paper.

\bibliographystyle{amsplain}

\end{document}